\theoremstyle{plain}
\newtheorem{theorem}{Theorem}[section]
\newtheorem{lemma}[theorem]{Lemma}
\newtheorem{proposition}[theorem]{Proposition}
\theoremstyle{remark}
\newtheorem{Definition}[theorem]{Definition}
\newtheorem{remark}[theorem]{Remark}
\newtheorem{notation}[theorem]{Notation}
\numberwithin{equation}{section}
\title[Stochastic wave equation]{Stochastic wave equation with Hölder noise coefficient: well-posedness and small mass limit}
\author{Yi HAN}
\address{Department of Pure Mathematics and Mathematical Statistics, University of Cambridge.
}
\email{yh482@cam.ac.uk}
\thanks{Supported by EPSRC grant EP/W524141/1.}
\begin{document}

\maketitle

\begin{abstract}
    We construct unique martingale solutions to the damped stochastic wave equation
    $$ \mu \frac{\partial^2u}{\partial t^2}(t,x)=\Delta u(t,x)-\frac{\partial u}{\partial t}(t,x)+b(t,x,u(t,x))+\sigma(t,x,u(t,x))\frac{dW_t}{dt},
    $$
    where $\Delta$ is the Laplacian on $[0,1]$ with Dirichlet boundary condition, $W$ is space-time white noise, $\sigma$ is $\frac{3}{4}+\epsilon$ -Hölder continuous in $u$ and uniformly non-degenerate, and $b$ has linear growth. The same construction holds for the stochastic wave equation without damping term. More generally, the construction holds for SPDEs defined on  separable Hilbert spaces with a densely defined operator $A$, and the assumed Hölder regularity on the noise coefficient depends on the eigenvalues of $A$ in a quantitative way. We further show the validity of the Smoluchowski-Kramers approximation: assume $b$ is Hölder continuous in $u$, then as $\mu$ tends to $0$ the solution to the damped stochastic wave equation converges in distribution, on the space of continuous paths, to the solution of the corresponding stochastic heat equation. The latter result is new even in the case of additive noise. 
\end{abstract}

\section{Introduction}

In this paper we are interested in the damped stochastic wave equation with irregularity in both the drift and the diffusion coefficients. On the compact interval $D=[0,1]\subset\mathbb{R}$ the equation can be formulated as 
\begin{equation}\label{1damped}\begin{aligned}
    &\mu \frac{\partial^2 u^\mu}{\partial t^2}=\Delta u^\mu(t,x)-\frac{\partial u^\mu}{\partial t} (t,x)+b(t,x,u^\mu(t,x))+g(t,x,u^\mu(t,x))dW(t,x),\\&
    u^\mu(t,x)=0,\quad x\in \partial D,\\&
    u^\mu(0,x)=u_0(x),\quad \frac{\partial u^\mu}{\partial t}(0,x)=v_0(x),\quad x\in D,\end{aligned}
\end{equation}
with $\mu>0$. The equation models a material with elasticity, experiencing friction, as well as deterministic and random forcing. The $-\frac{\partial u^\mu}{\partial t}$ term models elasticity and $\mu>0$ is the mass density.

We will also consider in parallel the stochastic wave equation without damping, formulated as
\begin{equation}\label{1undamped}\begin{aligned}
    &\mu \frac{\partial^2 u^\mu}{\partial t^2}=\Delta u^\mu(t,x)+b(t,x,u^\mu(t,x))+g(t,x,u^\mu(t,x))dW(t,x),\\&
    u^\mu(t,x)=0,\quad x\in \partial D,\\&
    u^\mu(0,x)=u_0(x),\quad \frac{\partial u^\mu}{\partial t}(0,x)=v_0(x),\quad x\in D.\end{aligned}
\end{equation}

When the coefficients $b$ and $g$ are Lipschitz continuous in $u^\mu$, these SPDEs have been well studied. The well-posedness of a strong solution follows from a fixed point theorem, and as $\mu\to 0$, the damped stochastic wave equation \eqref{1damped} converges to the corresponding stochastic heat equation. Motivated by finite dimensional Langevin systems, it is natural to ask whether we can deduce the same results when the coefficients $b$ and $g$ are not locally Lipschitz continuous. We will give a positive answer in this paper.

In this paper, we will work with equations \eqref{1damped} and \eqref{1undamped} in their abstract forms, formulated as evolution equations on an infinite dimensional, separable Hilbert space $H$. In the literature, \eqref{1damped} and \eqref{1undamped} are usually called random field solutions of SPDEs, which can be regraded as particular examples as abstract evolution equations on Hilbert space. Our abstract treatment not only makes the proof clearer but also shows the result has much wider applicability. We will rewrite \eqref{1damped} and \eqref{1undamped} as the following evolution equation defined on  $H\times H^{-1}:$ (in the setting of \eqref{1damped}, \eqref{1undamped} we take $H=L^2([0,1];\mathbb{R})$, and for a general separable Hilbert space $H$ see Section \ref{sect.2.1} for the definition of $H^{-1}$)
\begin{equation}\label{stract}
\mu \frac{\partial^2 u_\mu(t)}{\partial t^2}=Au_\mu(t) -\zeta \frac{\partial u_\mu(t)}{\partial t} +B(t,u_\mu(t))+G(t,u_\mu(t)) \frac{dW_t}{dt},
\end{equation}
where $W$ is the space-time white noise on $H$ and $A$ is some positive operator whose properties will be specified later. The variable constant $\zeta$ typically takes values 0 or 1. The choice $\zeta=1$ corresponds to the damped stochastic wave equation \eqref{1damped} and the choice $\zeta=0$ corresponds to the wave equation without damping term \eqref{1undamped}. If we take $H=L^2([0,1];\mathbb{R})$, take $A$ to be the Laplace operator on $[0,1]$ with Dirichlet boundary condition, take $W$ the space-time white noise on $L^2([0,1])$ and set $B(t,u_\mu(t))(x)=b(t,x,u^\mu(t,x))$ and $G(t,u_\mu(t))(x)=g(t,x,u^\mu(t,x))$, we recover random field SPDEs \eqref{1damped} and \eqref{1undamped}.

In this paper we are interested in the well-posedness issue of \eqref{stract} when  both $B$ and $G$ are irregular, and study the $\mu\to 0$ limit for the damped nonlinear stochastic wave equation \eqref{1damped} (more precisely, the abstract equation \eqref{stract} with $\zeta=1$). 

Given that $B$ and $G$ are irregular, in this paper it is more convenient to work with probabilistic weak solutions. We will mainly consider \textbf{weak mild} solutions to \eqref{stract}. That is, the solution is weak in probabilistic sense and mild in analytic sense. More precisely: (to simplify notations, we defer the definition of the Hilbert space $\mathcal{H}_1$ and the explicit expression of the mild formulation of $Z_\mu(t)$ to Section \ref{sect.2.1}.)

\begin{Definition}\label{definition1.1.1}
A weak mild solution to the abstract stochastic wave equation \eqref{stract} with initial date $(u_0,v_0)\in\mathcal{H}_1$ is given by a sequence $(\Omega,\mathcal{F},(\mathcal{F}_t),\mathbb{P},W,Z_\mu)$, where $(\Omega,\mathcal{F},(\mathcal{F}_t),\mathbb{P})$ is some filtered probability space defining a cylindrical wiener process $W$ and an $\mathcal{F}_t$-adapted, $H\times H^{-1}$-valued continuous process $(Z_\mu(t))_{t\geq 0}$  (the first coordinate of $Z_\mu(t)$ is $u_\mu(t)$,) such that $\mathbb{P}$-a.s. $Z_\mu(t)$ is a mild solution to \eqref{stract} with initial data $(u_0,v_0)\in\mathcal{H}_1$.  The precise formulation  of mild solution that $Z_\mu(t)$ should satisfy is given in \eqref{eq2.4geg} of Definition \ref{def2.4geg}. 
\end{Definition}

The two main theorems of this paper are as follows. The first considers well-posedness.

\begin{theorem}\label{theorem1.1} Given a separable Hilbert space $H$. Suppose the operator $A$, the nonlinear mapping $B:[0,\infty)\times H\to H$ and $G:[0,\infty)\times H\to \mathcal{L}(H_0,H)$ (with $H_0$ specified below)\footnote{In this paper if the subscript of $|\cdot|$ is not specified, then it refers to the norm of $H$.} satisfy the following list of assumptions:
    
    $\mathbf{H1}$ the operator $A:\mathcal{D}(A)\subset H\to H$ is densely defined, that $A$ has a set of eigenvectors $\{e_n\}_{n\geq 1}$ forming an orthogonal basis of $H$, with eigenvalues $Ae_n=-\alpha_n e_n$. Moreover, we may find some $c>0$ such that $\alpha_n\geq c$ for each $n$, and there exists some $\eta_0\in(0,1)$ such that for each $\eta\in(0,\eta_0)$,
    \begin{equation}\label{intermediatesum}
        \sum_{n\geq 1} \frac{1}{\alpha_n^{1-\eta}}<\infty.
    \end{equation}
Moreover, there exists a Banach space $H_0\subset H$ with a continuous embedding, such that each eigenvector $e_n$ lies in $H_0$, and that for some finite $M<\infty$, 
\begin{equation} \sup_n |e_n|_{H_0}\leq M<\infty.
\end{equation}

    $\mathbf{H2}$ The map $B(t,x):[0,\infty)\times H\to H$ is measurable.
    
    $\mathbf{H3}$ For some $\beta\in(1-\frac{\eta_0}{2},1]$, we may find a constant $M>0$ independent of $t$ such that the function $G(t,\cdot):H\to \mathcal{L}(H_0,H)$ satisfies
    \begin{equation}
        |G(t,x)-G(t,y)|_{\mathcal{L}(H_0,H)}\leq M|x-y|_H^\beta,\quad |x-y|_H\leq 1,\quad x,y\in H.
    \end{equation}
Moreover, $G$ is self-adjoint in the sense that for any $t>0,x\in H$,
\begin{equation}\label{adjoint}
    \langle G(t,x)e_j,e_k\rangle=\langle e_j,G(t,x)e_k\rangle,\quad j,k\in\mathbb{N}_+.
\end{equation}
 $\mathbf{H4}$ We may find an extension of $G(t,x)$ with $G(t,x)\in\mathcal{L}(\widetilde{H}_0,H)$ with $H_0\subset\widetilde{H}_0\subset H$ such that, for each $x\in H$, $G(t,x)$ has a right inverse $G^{-1}(t,x)$ on $H$ in the sense that $$G(t,x)G^{-1}(t,x)u=u \text{ for any }u\in H,$$with $G^{-1}(t,x)H\subset\widetilde{H}_0$ and satisfies, for some $C>0$ independent of $t>0,$
 \begin{equation}
   \sup_{x\in H}|G^{-1}(t,x)|_{\mathcal{L}(H,H)}\leq C<\infty.
 \end{equation}

 $\mathbf{H5}$ the nonlinear mappings $B$ and $G$ have linear growth: for some $M>0,$
 \begin{equation}
     |B(t,x)|_H\leq M(1+|x|_H),\quad |G(t,x)|_{\mathcal{L}(H_0,H)}\leq M(1+|x|_H).
 \end{equation}

    Then there exists a unique weak mild solution to the abstract stochastic wave equation \eqref{stract} with initial data $(u_0,v_0)\in\mathcal{H}_1$, in the sense of Definition \ref{definition1.1.1}.
\end{theorem}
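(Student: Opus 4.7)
The strategy is the classical two-stage program for SPDEs with non-Lipschitz coefficients: establish weak existence through a Galerkin--mollification scheme combined with tightness arguments, and then use Girsanov's theorem both to incorporate the measurable-only drift $B$ and to reduce the uniqueness question to a driftless reference equation, for which a Yamada--Watanabe-type pathwise estimate will be available under the hypothesis $\beta>1-\eta_0/2$.

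\emph{Existence.} Writing $P_N$ for the orthogonal projection onto $\mathrm{span}\{e_1,\dots,e_N\}$ and $G^{(n)}$ for a Lipschitz mollification of $G$, I first solve the finite-dimensional driftless SDE
\[
\mu\,\partial_t^2 u = P_N A u - \zeta\, \partial_t u + P_N G^{(n)}(t,u)\,dW,
\]
which has a unique strong solution by the classical theory. Hypothesis $\mathbf{H5}$ and Gronwall's inequality deliver moment bounds on $\sup_{t\le T}(|u_\mu^{N,n}(t)|_H + |\partial_t u_\mu^{N,n}(t)|_{H^{-1}})$ uniform in $N,n$. Tightness of the corresponding laws on $C([0,T];H\times H^{-1})$ follows by applying Kolmogorov's criterion to the mild formulation; the stochastic convolution against the wave propagator is controlled using the summability \eqref{intermediatesum}, which is exactly the trace condition ensuring that this propagator applied to space--time white noise produces a Hölder-continuous $H\times H^{-1}$-valued process. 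A Skorokhod representation then allows me to pass to the limit in the mild formulation and identify a weak mild solution of the driftless equation. The full drift $B$ is reinstated by a Girsanov change of measure based on $\mathbf{H4}$: the density $\exp\!\bigl(\int_0^t\langle G^{-1}(s,u_\mu)B(s,u_\mu),\,dW_s\rangle - \tfrac12\int_0^t|G^{-1}(s,u_\mu)B(s,u_\mu)|^2\,ds\bigr)$ is a true martingale by Novikov after localization, using $\mathbf{H5}$ together with the uniform bound in $\mathbf{H4}$, and under the new measure $Z_\mu$ solves the full equation.

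\emph{Uniqueness in law.} Given two weak mild solutions, a further Girsanov transform removes the drift from each and reduces the problem to uniqueness for the driftless equation. For that equation I plan to establish pathwise uniqueness via a Yamada--Watanabe-type estimate applied to the mild formulation: It\^o's formula applied to $\phi_n(|u_\mu-\widetilde u_\mu|_H^2)$, for the standard sequence of concave approximations of $|x|$, produces a quadratic variation term which, by the self-adjointness clause \eqref{adjoint} in $\mathbf{H3}$, decomposes diagonally along the basis $\{e_n\}$. The $\beta$-H\"older bound in $\mathbf{H3}$ controls each mode, and the hypothesis $\beta>1-\eta_0/2$ combined with \eqref{intermediatesum} makes the resulting infinite sum finite and forces the correction to vanish as $n\to\infty$, yielding $u_\mu=\widetilde u_\mu$ almost surely. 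Pathwise uniqueness together with weak existence then gives uniqueness in law through the Hilbert-space version of the Yamada--Watanabe theorem.

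\emph{Main obstacle.} The most delicate step is the pathwise uniqueness estimate for the driftless equation. The one-dimensional Yamada--Watanabe argument does not transfer directly, because in the mild formulation the noise enters through the wave propagator and the quadratic variation correction becomes an infinite sum over modes whose total contribution must be balanced against the H\"older exponent $\beta$. The threshold $\beta>1-\eta_0/2$ is tailored precisely so that this sum is summable, and the self-adjointness assumption on $G$ in $\mathbf{H3}$---otherwise unused in the argument---is what enables the mode-by-mode reduction. Verifying Novikov's condition uniformly along the approximation and passing the Girsanov transform through the Skorokhod limit is expected to be routine given $\mathbf{H4}$ and $\mathbf{H5}$.
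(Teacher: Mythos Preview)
Your plan has two genuine gaps, one in each half of the argument.

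\textbf{Uniqueness.} You propose to prove \emph{pathwise} uniqueness via a Yamada--Watanabe iteration on $\phi_n(|u_\mu-\widetilde u_\mu|_H^2)$. This is not known to work here, and the paper explicitly avoids it: it proves only uniqueness \emph{in law}. The difficulty is that in the mild formulation the noise enters through the wave propagator $\Pi_1\mathcal{S}_\mu(t-s)\mathcal{I}_\mu$, so there is no clean It\^o formula for $|u_\mu-\widetilde u_\mu|_H^2$, and the mode-by-mode decomposition you describe does not materialize. (The self-adjointness \eqref{adjoint} is used in the paper for a different purpose: to switch $\langle \Phi(s)e_j,e_k\rangle$ to $\langle \Phi(s)e_k,e_j\rangle$ inside the BDG quadratic variation so that one can sum over $j$ and get $|\Phi(s)e_k|_H^2$; it does not diagonalize the increment process.) The paper instead uses a \emph{generalized coupling} argument: given a weak solution $u$, build an auxiliary process $\widetilde u^n$ solving the Lipschitz-approximated equation with an added control drift $\lambda(u-\widetilde u^n)\,1_{t\le\tau}$; Girsanov bounds $d_{TV}(u^n,\widetilde u^n)$ by $(c_n)^\gamma$, while a pathwise estimate on $u-\widetilde u^n$ uses the maximal inequality for the stochastic convolution against the \emph{shifted} semigroup $\mathcal{S}_\mu^\lambda$ (with $A$ replaced by $A-\lambda$), whose crucial feature is the factor $\lambda^{-\eta p/2}$. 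Choosing $\lambda=(c_n)^{\gamma-1}$ and using $\beta>1-\eta_0/2$ makes $(c_n)^{(1-\gamma)\eta/2+\beta}\ll c_n$, so the stopping time $\tau$ never triggers and $\widetilde u^n\to u$ in probability. This yields weak uniqueness without any pathwise uniqueness step.

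\textbf{Existence.} Your tightness argument via Kolmogorov's criterion presupposes uniform H\"older regularity in time of the stochastic convolution in $H\times H^{-1}$. For the wave semigroup this fails in the abstract Hilbert setting: the high modes oscillate like $\sin(\sqrt{\alpha_k/\mu}\,t)$ with no decay, so one controls norms but not time increments. The paper handles existence differently: it reuses the coupling machinery above to show that the Lipschitz approximants $u^n$ are Cauchy in a bounded Wasserstein distance on $\mathcal{C}([0,T];H)$, then passes to the limit by Skorokhod. Spatial compactness (Proposition~\ref{proposition}, $H^\delta\hookrightarrow H$ compact) is still used, but only to localize to a compact set $K$ on which $G^n\to G$ uniformly; it is not used to produce tightness directly.
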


The non-degeneracy assumption $\mathbf{H}4$ cannot be removed if no other additional assumptions are made. Indeed, \cite{gomez2017uniqueness} constructed examples of second order SDEs with diffusion coefficient  $g(t,x)=|x|^\alpha$ for $\alpha\in(0,1)$ where the solutions are nonunique. These counterexamples are excluded by our assumption $\mathbf{H}4$.

\begin{remark}
We give some specific examples where Assumption $\mathbf{H}1$ holds true. The most important case is the Laplacian on $[0,1]$ with Dirichlet boundary condition, then $\mathbf{H}1$ is satisfied with $\eta_0=\frac{1}{2}$ and consequently we can take any $\beta\in(\frac{3}{4},1]$ in Assumption $\mathbf{H}3$. Assumption $\mathbf{H}1$ also covers some random field SPDEs in dimension two or higher, say one considers the bi-Laplacian  $A=-\Delta^2$ on any bounded domain in $\mathbb{R}^d$, $d=1,2,3,$ with Dirichlet boundary conditions.
\end{remark} 

\begin{remark}We show how random field solutions \eqref{1damped} and \eqref{1undamped} to the stochastic wave equation are covered by Assumptions $\mathbf{H}1$ to $\mathbf{H}5$. We take $H=L^2([0,1])$ and $H_0=L^\infty([0,1])$. Since the eigenfunctions of Laplacian on $[0,1]$ are given by $\sin (n\pi x)$, they are uniformly bounded in $L^\infty([0,1])$. This justifies $\mathbf{H}1$. Assume $b$ is $\alpha$-Hölder continuous in $u$, we check $B$ is $\alpha$-Hölder continuous on $H$, justifying assumption $\mathbf{H}2'$ given below: by Cauchy-Schwartz inequality, for any two $u,v\in H$,
$$\begin{aligned}|B(t,u)-B(t,v)|_H=&\sqrt{\int_0^1 |b(t,x,u(x))-b(t,x,v(x))|^2dx}\\\leq &C\sqrt{\int_0^1 |u(x)-v(x)|^{2\alpha}dx}\leq C|u-v|_H^\alpha.\end{aligned}.$$
    The same estimate applies to $g(t,x,u)$ assuming  $g$ is $\beta$-Hölder continuous in $u$. Then notice that, from the definition: for any $u\in H$ and $v\in H_0$, the map $G:[0,\infty)\times H\to \mathcal{L}(H_0,H)$ is given by  $(G(t,u)v)(x)=g(t,x,u(t,x))v(x),x\in[0,1]$, we have,
    \begin{equation}
        |G(t,u)-G(t,v)|_{\mathcal{L}(H_0,H)}\leq C|g(t,x,u(t,x))-g(t,x,v(t,x))|_{L^2([0,1])},
    \end{equation}
    then assumption $\mathbf{H}3$ is justified given the previous estimate. The self-adjointness assumption \eqref{adjoint} follows directly from definition of $G$. For assumption $\mathbf{H}4$, if we assume that $|g(t,x,u)|\geq\epsilon$ for some $\epsilon>0$ and all $t,x,u$, then we may define $G^{-1}:[0,\infty)\times H\to \mathcal{L}(H,H)$ via $G^{-1}(t,u)v(x)=\frac{1}{g(t,x,u(t,x))}v(x),$ where $u,v\in H$. Then it is clear that we can slightly extend the definition of $G$ such that  $GG^{-1}v=v$ for any $v\in H$. We shall assume that $g(t,x,u)$ has linear growth in $u$, so that $|g(t,x,u)|_H\leq C(1+|u|_H)$. This implies $G$ is a well-defined map on $\mathcal{L}(H_o,H)$ and assumption $\mathbf{H}5$ is satisfied.

We note that we can consider some other choices of $H_0$ that lead to new interesting examples of SPDEs with non-local coefficients. Say for example we can take $H_0=H$ and consider $g(t,x,u(x))=1+h(|u(x)-v(x)|_{L^2([0,1])})$ for some $v\in H$ and some $\frac{3}{4}+\epsilon$-Hölder continuous function $h>0$. A lot more different types of SPDEs can be generated under this framework, whose well-posedness are new when the dependence is only Hölder continuous.
    
\end{remark}

The second result considers the $\mu\to 0$ small mass limit of the damped abstract stochastic wave equation \eqref{stract} with $\zeta=1$. The limit is the following $H$-valued stochastic heat equation: 
\begin{equation}\label{shehere!}
    du(t)=[Au(t)+B(t,u(t))]dt+G(t,u(t))dW(t),\quad u(0)=u_0\in H.
\end{equation}
interpreted in terms of mild formulation (see equation \eqref{stochheat}).

The stochastic heat equation \eqref{shehere!} under the regularity assumptions stated in Theorem \ref{2smallnoise} is recently investigated in \cite{han2022exponential}.

\begin{theorem}\label{2smallnoise}
    Assume all the assumptions $\mathbf{H1}$ to $\mathbf{H5}$ are true. Moreover, assume the following stronger assumptions:

    $\mathbf{H}1'$ In addition to $\mathbf{H}_1$, assume we may find some constants $C>0$ such that
    \begin{equation}
\frac{1}{C} k^{\frac{1}{1-\eta_0}}\leq \alpha_k\leq Ck^{\frac{1}{1-\eta_0}} \quad \forall k\in\mathbb{N}_+
\end{equation}
(this assumption is stronger than and trivially implies estimate \eqref{intermediatesum}.   This assumption is satisfied by the Laplacian operator on $[0,1]$ once we take $\eta_0=\frac{1}{2}$).

 $\mathbf{H}2'$: the drift $B$ is Hölder continuous: for some $\alpha\in(0,1]$, we can find a time independent constant $M>0$ such that
 \begin{equation}
     |B(t,x)-B(t,y)|_H\leq M|x-y|_H^\alpha,\quad \forall |x-y|\leq 1,\quad x,y\in H.
 \end{equation} Consider the damped abstract stochastic wave equation, \eqref{stract} with $\zeta=1$.

 Then for any $T>0$, the solution $u_\mu(t)$ (the first component of $Z_\mu(t)\in H\times H^{-1}$) converges to $u(t)$, the solution to the stochastic heat equation \eqref{shehere!}, in terms of convergence in distribution, as $\mathcal{C}([0,T];H)-$valued processes.
\end{theorem}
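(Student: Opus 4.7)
The plan is the classical three-step scheme: (a) uniform-in-$\mu$ tightness of the laws of $u_\mu$ in $\mathcal{C}([0,T];H)$, (b) identification of every subsequential weak limit as a weak mild solution of \eqref{shehere!}, and (c) uniqueness for the limit. I would start by projecting \eqref{stract} onto the eigenmode $e_n$ to obtain a scalar second-order linear SDE with characteristic roots $\lambda_n^\pm(\mu)=(-1\pm\sqrt{1-4\mu\alpha_n})/(2\mu)$, and collect the resulting diagonal fundamental solutions into operators $S_\mu^{\pm}(t)$, giving an explicit mild formula for $u_\mu(t)$. Mode by mode $\lambda_n^{+}(\mu)\to -\alpha_n$ and $\lambda_n^{-}(\mu)\to -\infty$, so $S_\mu^{+}(t)$ converges pointwise to the heat semigroup $e^{tA}$ while $S_\mu^{-}(t)$ decays exponentially at rate $1/\mu$. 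The polynomial eigenvalue growth in $\mathbf{H}1'$ provides summable envelopes that upgrade this pointwise convergence to operator convergence on appropriate Sobolev subspaces of $H$.

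For Step (a), the linear growth assumption $\mathbf{H}5$ combined with a Burkholder estimate on the stochastic convolution yields moment bounds $\mathbb{E}\sup_{t\leq T}|u_\mu(t)|_H^p\leq C_{p,T}$ uniformly in $\mu\in(0,1]$. The crucial observation is that $\int_0^t S_\mu^{+}(t-s)G(s,u_\mu(s))\,dW_s$ inherits the $H$-Sobolev regularity of its heat-equation counterpart, with constants that remain bounded as $\mu\to 0$; combined with the summability condition in $\mathbf{H}1$ this places $u_\mu(t)$ in a fractional Sobolev space compactly embedded in $H$, uniformly in $\mu$. A parallel argument yields Kolmogorov-type time-increment bounds $\mathbb{E}|u_\mu(t)-u_\mu(s)|_H^p\leq C|t-s|^{1+\delta}$ uniform in $\mu$, and tightness follows.

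For Step (b), invoke Prokhorov to extract a subsequence and Skorokhod's representation to realise almost sure convergence $u_{\mu_k}\to u$ in $\mathcal{C}([0,T];H)$ on a new probability space, together with a cylindrical Wiener process $\widetilde{W}$. The Hölder continuity of $B$ in $\mathbf{H}2'$ and of $G$ in $\mathbf{H}3$, together with the linear-growth dominated convergence from $\mathbf{H}5$, gives convergence in probability of the drift integrals. For the stochastic integral one shows that the integrands converge in a suitable Hilbert-Schmidt sense, using the operator convergence derived in the first paragraph, and invokes the standard continuity theorem for Itô integrals to pass to the limit. The term carrying the initial data converges directly to $e^{tA}u_0$ while the $v_0$-contribution vanishes as $\mu\to 0$. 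Hence the limit $u$ is a weak mild solution of \eqref{shehere!}; Step (c) is then immediate from weak uniqueness for \eqref{shehere!}, supplied by \cite{han2022exponential}, which upgrades distributional subsequential convergence to convergence of the whole family.

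The main obstacle is obtaining the uniform-in-$\mu$ tightness estimates: the wave semigroup provides no smoothing on $H$, so spatial compactness must be extracted solely from the stochastic convolution, and this requires carefully splitting the spectrum into the underdamped regime $4\mu\alpha_n>1$ and the overdamped regime $4\mu\alpha_n<1$ and showing that the resulting bounds remain finite uniformly as $\mu\to 0$. A secondary technical point is that, because $G$ is only Hölder continuous, the identification step requires almost sure (not merely weak) convergence of $u_{\mu_k}$ to pass to the limit inside $G$; Skorokhod's trick is what makes this feasible and forces the weak-solution framework used throughout the paper.
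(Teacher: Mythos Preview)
Your tightness argument in Step (a) hinges on the Kolmogorov time-increment bound $\mathbb{E}|u_\mu(t)-u_\mu(s)|_H^p\leq C|t-s|^{1+\delta}$ uniformly in $\mu$, but the paper explicitly flags this as unavailable in the abstract Hilbert-space setting (see the discussion in Section \ref{3.111} following Proposition \ref{propsaulte}). The obstruction is that the wave semigroup $\mathcal{S}_\mu(t)$ is not analytic and gives essentially no control on time increments: even in the linear case the high modes oscillate at frequency $\sim\sqrt{\alpha_k/\mu}$, and the only known route to equicontinuity (Proposition \ref{propsaulte}) requires $z\in L^p(\Omega;\mathcal{C}([0,T];\mathcal{H}_\epsilon))$ and relies on Sobolev embedding specific to random-field SPDEs on $[0,1]$, none of which is available under $\mathbf{H1}$--$\mathbf{H5}$. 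Without equicontinuity you cannot run Arzel\`a--Ascoli in $\mathcal{C}([0,T];H)$, so the Prokhorov--Skorokhod route stalls before you reach identification.

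The paper sidesteps tightness entirely. It fixes Lipschitz approximations $B^n,G^n$ and uses the generalized coupling technique (an auxiliary controlled process $\widetilde{u}_\mu^n$ plus Girsanov and the pathwise estimate from Theorem \ref{theorem2.11ext}) to show that both $W(u_\mu|_{[0,T]},u_\mu^n|_{[0,T]})$ and $W(u|_{[0,T]},u^n|_{[0,T]})$ are bounded by $C(c_n)^\omega$ with constants \emph{independent of $\mu$} (Proposition \ref{4.94.9}); this is where the $\mu$-uniform estimates of Section 4.1 are spent. The remaining piece $W(u^n|_{[0,T]},u_\mu^n|_{[0,T]})\to 0$ is the classical Smoluchowski--Kramers limit for Lipschitz coefficients, handled by a direct Gronwall argument on the mild formulation. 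A triangle inequality in the Wasserstein metric then yields $W(u|_{[0,T]},u_\mu|_{[0,T]})\to 0$ without ever establishing tightness of $\{u_\mu\}$ directly.
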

We will actually prove convergence of $u_\mu|_{[0,T]}$ towards $u|_{[0,T]}$ with respect to a Wasserstein distance on $\mathcal{P}(\mathcal{C}([0,T];H)),$ the space of Borel probability measures on $\mathcal{C}([0,T];H)$.

This paper provides, to our best knowledge, currently the most general results on stochastic wave equation with irregular coefficients. We cover irregular drift and diffusion coefficients, and irregularity in the diffusion coefficient has never been considered before in this context. We work on a separable Hilbert space so that random field solutions are contained as special cases but our theory is much more general. We provide the first proof of Smoluchowski-Kramers approximation for an infinite dimensional system with merely Hölder coefficients. Due to the complexity of the wave equation semigroup (which is not analytic, for example), even the weak existence part of Theorem \ref{theorem1.1} seems novel and is not covered by standard methods (see Section \ref{3.111} for discussions). Before we discuss the method and plan of this paper, we first give a brief literature review of related results.

\subsection{Related literature}
\subsubsection{Works on regularization by noise}
In the broadest scope, this work contributes to investigating the regularization by noise phenomenon for stochastic differential equations with irregular coefficients. The simplest case one may think of is a finite dimensional SDE 
$$dX_t=b(X_t)dt+\sigma(X_t)dW_t$$ defined on $\mathbb{R}^d$.
When $\sigma=I$ is the identity and $b$ is bounded measurable, one can construct a unique weak solution via Girsanov transform, and prove strong uniqueness via Zvonkin's transform \cite{zvonkin1974transformation}. Extensions to unbounded, time dependent coefficients can be found in \cite{krylov2005strong}. Concerning an irregular diffusion coefficient $\sigma$, when $\sigma$ is non-degenerate and continuous, one can always construct a unique weak solution via representing the solution as a time changed Brownian motion (see \cite{stroock1997multidimensional}, Section 6.5), and under some extra Sobolev regularity assumptions on $ \sigma$ one can show the solution is strong. The two claims concerning $\sigma$ 
are available primarily because we work in finite dimensions, so that the solution to the SDE without drift is a time-changed Brownian motion, and the analytical theory of parabolic PDEs can be applied to derive quantitative estimates.

For an infinite dimensional stochastic system, consider the stochastic heat equation
\begin{equation}\label{4she}dX_t=AX_t dt+b(X_t)dt+\sigma(X_t)dW_t\end{equation}
where we may as well take $A$ the Laplacian on $[0,1]$ as a motivating example. There has been much recent investigation of well-posedness with an irregular drift $b$ in the case of additive noise $\sigma=I$: pathwise uniqueness is proved for Hölder continuous drifts $b$ in \cite{da2010pathwise} in the context of evolution equations on Hilbert space, and for bounded measurable drifts \cite{gyongy1993quasi} and distributional drifts \cite{athreya2020well} in the context of random field solutions (but not for evolution equations on infinite dimensional Hilbert space). The investigation of an irregular diffusion coefficient in the infinite dimensional setting turns out to be much more difficult. Notable technical challenges appear in infinite dimensions, as we may no longer represent the solution as a time changed Brownian motion and no analytical PDE theory is available.
In \cite{mytnik2011pathwise} the authors made a great achievement in proving pathwise uniqueness of random field solution to the stochastic heat equation when the drift $b$ is Lipschitz and diffusion coefficient $\sigma$ is $\frac{3}{4}+\epsilon$-Hölder continuous in the function argument. The proof is fairly technical and does not seem to extend to Hilbert space valued abstract SPDEs. Recently in \cite{han2022exponential} the author introduced a direct method to prove uniqueness of martingale solutions of \eqref{4she} with irregular $\sigma$ and $b$ for \eqref{4she} defined on infinite dimensional separable Hilbert space, allowing an additional drift $(-A)^{1/2}F(X_t)$ and studied the long time behavior of solutions.

For the stochastic wave equation \eqref{1damped}, there are a few papers considering irregular drift coefficients and additive white noise, see for example \cite{masiero2017well}, and papers on a related problem of martingale solutions with reflection, see \cite{kim2008martingale}. Very recently, in \cite{bogso2022smoothness} the authors proved pathwise uniqueness and Malliavin differentiability for the random field solution to the hyperbolic SPDE $X_{s,t},(s,t)\in[0,T]^2:$
\begin{equation}\label{hyperbolic}
\begin{aligned}
&\frac{\partial^2 X(s,t)}{\partial s\partial t}=b(s,t,X(s,t))+\frac{\partial^2W(s,t)}{\partial s\partial t},\quad s,t\in[0,T]^2,\\&X(s,0)=X(0,t)=x\end{aligned}
\end{equation}
where $W_{s,t}$ is the Wiener sheet and $b:[0,T]^2\times\mathbb{R}^d\to\mathbb{R}^d$ is bounded measurable. Their method is based on an adaptation of Davie's pathwise argument \cite{davie2007uniqueness} and does not extend to the case of an irregular diffusion coefficient $\sigma$. Moreover, the stochastic wave equation with a damping term $\partial_t u$ is not covered by the results in \cite{bogso2022smoothness}, as they consider the SDE \eqref{hyperbolic} and then recover the wave equation via a $\frac{\pi}{4}$ rotation. Thus Theorem \ref{theorem1.1} in this paper is somewhat orthogonal to \cite{bogso2022smoothness}: we consider (damped) semilinear stochastic wave equation with irregular noise coefficients and solve SPDEs on an infinite dimensional Hilbert space, which has much wider generality and contains the random field SPDEs as special examples. Due to this infinite dimensional feature, establishing pathwise uniqueness is particularly difficult (for the stochastic wave equation with additive noise, pathwise uniqueness of the Hilbert space valued abstract SPDE is currently only established when the drift $B$ is $\beta>\frac{2}{3}$-Hölder continuous, see \cite{masiero2017well}). Meanwhile, weak well-posedness and convergence in distribution are already quite useful in many applications, so we do not discuss the proof of pathwise uniqueness in this paper.

\subsubsection{Works on stochastic wave equation}
This paper is the first, to our best knowledge, to consider stochastic wave equations with non-Lipschitz noise coefficients. Earlier studies on the stochastic wave equation focus on Lipschitz continuous $b$ and $\sigma$ in dimensions 1,2 and 3, working with white noise in dimension 1 and coloured noise in dimension 2 and 3 so that the solutions are function valued. Literature in this direction is extensive so we just cite a few and refer to the references therein: \cite{dalang1998stochastic}, \cite{quer2004stochastic},  \cite{carmona1988random}, \cite{carmona1986introduction}. In recent years, the stochastic wave equation with polynomial nonlinearity, driven by space-time white noise in two and three dimensions have been investigated in a number of papers \cite{gubinelli2018renormalization}, \cite{gubinelli2022global}, \cite{gubinelli2023paracontrolled}. The Da Prato-Debussche trick \cite{da2003strong}  or paracontrolled calculus \cite{gubinelli2015paracontrolled} are used in these works and the solutions are not function valued. Thus in these works one has to consider an additive white noise or sufficiently regular multiplicative noise coefficient.

\subsubsection{Works on Smoluchowski-Kramers approximation}

The second component of this paper, Theorem \ref{2smallnoise}, concerns the small noise limit of stochastic wave equations. This phenomenon was first studied in finite dimensions for the Langevin equation in \cite{kramers1940brownian} and \cite{smoluchowski1916drei} and is then called the Smoluchowski-Kramers approximation. More results on the convergence in finite dimensions can be found in \cite{freidlin2004some}. The first proof of Smoluchowski-Kramers approximation for stochastic wave equation was given in \cite{cerrai2006smoluchowski}, \cite{cerrai2006smoluchowski2}, and later extended in \cite{salins2019smoluchowski} to show that the approximation works for multiplicative noise in any spatial dimension provided that the covariance of the noise is suitably chosen. Some more recent investigations of Smoluchowski-Kramers approximation can be found in \cite{cerrai2006smoluchowski}, \cite{cerrai2016smoluchowski}, \cite{cerrai2022smoluchowski}. In these works it is assumed that the drift and diffusion coefficients of the SPDE are Lipschitz continuous in $u$. In \cite{zine2022smoluchowski} the authors proved Smoluchowski-Kramers approximation for 2D stochastic wave equation with additive space-time white noise and polynomial and sine nonlinearities.

 In a recent work \cite{xie2022smoluchowski} the authors considered a finite dimensional system with Hölder continuous drift and non-degenerate noise, and proved Smoluchowski-Kramers approximation via an application of Zvonkin's transform. In this paper Theorem \ref{2smallnoise} we show that Smoluchowski-Kramers approximation is also valid for irregular coefficients in infinite dimensions, widely extending the scope of the said topic to cases where analytical techniques are not available.

\subsection{Plan of this paper}

The key technique that we will use to treat irregular coefficients is 
a generalized coupling technique introduced in \cite{kulik2020well} in the setting of stochastic delay equations. This technique makes use of the non-degeneracy of noise coefficient $G$ and a combination of both pathwise and probabilistic techniques. Suppose we want to measure the distance between two solutions with same initial condition but different coefficients,
\begin{equation}
\mu \frac{\partial^2 u(t)}{\partial t^2}=Au(t) -\zeta \frac{\partial u(t)}{\partial t} +G(t,u(t)) \frac{dW_t}{dt},
\end{equation}
\begin{equation}
\mu \frac{\partial^2 u^n(t)}{\partial t^2}=Au^n(t) -\zeta \frac{\partial u^n(t)}{\partial t} +G^n(t,u^n(t)) \frac{dW_t}{dt},
\end{equation}
we introduce a control parameter $\lambda$, a stopping time $\tau$ and consider the controlled process
\begin{equation}\label{}
\mu \frac{\partial^2 \widetilde{u}^n(t)}{\partial t^2}=A\widetilde{u}^n(t) -\zeta \frac{\partial \widetilde{u}^n(t)}{\partial t} +\lambda(u(t)-\widetilde{u}^n(t))1_{t\leq\tau}
+G^n(t,\widetilde{u}^n(t)) \frac{dW_t}{dt},
\end{equation} 

In the probabilistic step we estimate the total variation distance between $u^n|_{[0,T]}$ and $\widetilde{u}^n|_{[0,T]}$ via Girsanov transform and relative entropy computations, which is standard. In the analytic step we compare the distance between $u(t)$ and $\widetilde{u}^n(t)$ via pathwise estimates. The pathwise estimate is somewhat more involved in infinite dimensions. In the parabolic literature \cite{han2022exponential} this estimate was achieved via a careful use of the stochastic factorization theorem. In the hyperbolic case of this paper, we further develop the ideas in \cite{han2022exponential}, combining more refined estimates of second order semigroups in \cite{salins2019smoluchowski}. The proof of Theorem \ref{2smallnoise} goes through similar lines, yet a further refined semigroup analysis is needed to make the leading numerical constants independent of the small parameter $\mu$.

This article is organised as follows. In Section \ref{Section2} we develop the necessary functional analytic tools and maximal inequality estimates for stochastic integrals. In section \ref{Section3} we prove existence and uniqueness of a weak-mild solutions, Theorem \ref{theorem1.1}. In Section \ref{Section4} we prove the small mass limit, Theorem \ref{2smallnoise}.

\section{Stochastic estimates of the semigroup}\label{Section2}

We now summarize the functional analytic background of this paper, and give a precise definition to the solutions of the (damped) stochastic wave equation.

\subsection{Functional analytic preliminaries}\label{sect.2.1} For any $\delta\in\mathbb{R}$,
define the interpolation spaces $H^\delta$ (see for example \cite{hairer2009introduction}, Section 4.4) as the completion of the vector space $\operatorname{Vect}(e_1,e_2,\cdots)$ of finite linear sums of $e_k,k\in\mathbb{N}$ with respect to the norm
$$ |f|_{H^\delta}^2=\sum_{k=1}^\infty \alpha_k^\delta \langle f,e_k\rangle_H^2,
$$
and define the phase space $\mathcal{H}_\delta:=H^\delta\times H^{\delta -1}$. We will mostly consider $\mathcal{H}:=\mathcal{H}_0$. 

To make the notations more compact, we keep the undetermined constant $\zeta$ that take two values: 0 and 1. The choice $\zeta=1$ corresponds to the damped stochastic wave equation, and the choice $\zeta=0$ corresponds to the one without damping. We define a linear operator, for both $\zeta=0$ and $\zeta=1$: 
$\mathcal{A}^\lambda
_\mu:D(\mathcal{A}_\mu^\lambda):=\mathcal{H}_{\delta+1}\to \mathcal{H}_\delta$ via
\begin{equation}
    \mathcal{A}_\mu^\lambda(u,v)=(v,(A-\lambda)u/\mu-\zeta v/\mu).
\end{equation}
In the following discussions, whenever $\lambda$ is omitted in the notation, it means that we take $\lambda=0$.

 We give some clarifications for the constants $\lambda,\mu,\zeta$: the constant $\lambda$ is an intermediate parameter and we will set it very large in the proof. It does not appear in the final statements of our theorems. The constant $\mu$ is the parameter that we will tune to zero in the small mass limit. The value of $\mu\neq 0$ is irrelevant when we prove the well-posedness of the wave equation. In the proof of well-posedness, we will consider both cases $\zeta=0,1$, while in the proof of small mass limit only the case $\zeta=1$ matters.

Now we denote by $S_\mu^\lambda(t):\mathcal{H}_\delta\to \mathcal{H}_\delta$ the $C_0$ semigroup generated by $\mathcal{A}_\mu^\lambda$. This semigroup is continuous but not analytic, thus constituting the major analytical difficulty of this paper.

Now define $\Pi_1:\mathcal{H}_\delta\to H^\delta$ to be the projection to the first component and $\Pi_2:\mathcal{H}_\delta\to H^{\delta -1}$ projection to the second component. Now set $\mathcal{I}_\mu:H^\delta\to H_\delta$ via
\begin{equation}
    \mathcal{I}_\mu u=(0,u/\mu).
\end{equation}
Then for each $\lambda\geq 0,$ the solution to the abstract stochastic wave equation with $\zeta=0,1$
\begin{equation}\label{**2dampedabstract}
\mu \frac{\partial^2 u_\mu^\lambda(t)}{\partial t^2}=(A-\lambda)u_\mu^\lambda(t) -\zeta \frac{\partial u_\mu^\lambda(t)}{\partial t} +B(t,u_\mu^\lambda(t))+G(t,u_\mu^\lambda(t)) \frac{dW_t}{dt},
\end{equation}
can be formulated into the following abstract evolution equation. Denote the two coordinates by $Z_\mu^\lambda(t)=(u_\mu^\lambda (t),v_\mu^\lambda(t))$ with initial value $Z_\mu^\lambda(0)=Z_0=(u_0,v_0)\in \mathcal{H}_1$, we can rewrite \eqref{**2dampedabstract} as
\begin{equation}
    dZ_\mu^\lambda(t)=\mathcal{A}_\mu^\lambda  Z_\mu^\lambda(t)dt+\mathcal{I}_\mu B(t,\Pi_1 Z_\mu^\lambda(t))dt+\mathcal{I}_\mu G(t,\Pi_1 Z_\mu^\lambda(t))dW_t.
\end{equation}

The mild solutions to the stochastic wave equations are defined as follows:
\begin{Definition}\label{def2.4geg} The mild solution to the abstract stochastic wave equation \eqref{stract} is defined as the adapted process $Z_\mu$ on $\mathcal{H}_0$ that satisfies, for each $t>0$,
\begin{equation}\label{eq2.4geg}
    \begin{aligned}
Z_\mu(t)&=\mathcal{S}_\mu(t)z_0+\int_0^t \Pi_1 \mathcal{S}_\mu(t-s)\mathcal{I}_\mu B(s,\Pi_1 Z_\mu(s))ds\\&+\int_0^t \Pi_1 \mathcal{S}_\mu (t-s)\mathcal{I}_\mu G(s,\Pi_1 Z_\mu(s))dW_s.\end{aligned}
\end{equation}
More generally, the mild solution to the abstract stochastic wave equation \eqref{**2dampedabstract} is defined as the adapted process $Z_\mu^\lambda$ on $\mathcal{H}_0$ that satisfies, for each $t>0$,
\begin{equation}\label{generalpi1}
    \begin{aligned}
Z_\mu^\lambda(t)&=\mathcal{S}_\mu^\lambda(t)z_0+\int_0^t \Pi_1 \mathcal{S}_\mu^\lambda(t-s) \mathcal{I}_\mu B(s,\Pi_1 Z_\mu^\lambda(s))ds\\&+\int_0^t \Pi_1 \mathcal{S}_\mu^\lambda (t-s)\mathcal{I}_\mu G(s,\Pi_1 Z_\mu^\lambda(s))dW_s.\end{aligned}
\end{equation}

The mild solution to the stochastic heat equation \eqref{shehere!} is defined as the adapted process $u(t)$ on $H$ that satisfies, for each $t>0$,
\begin{equation}\label{stochheat}
    u(t)=\mathcal{S}(t)u_0+\int_0^t \mathcal{S}(t-s)B(s,u(s))ds+\int_0^t\mathcal{S}(t-s)G(s,u(s))dW_s.
\end{equation}

\end{Definition}

Since $A-\lambda $ has eigenvectors $\{e_k\}_{k\geq 1}$ for each $\lambda\geq 0$, the operator $\mathcal{A}_\mu^\lambda$ leaves invariant the linear subspace of $\mathcal{H}$ with the form $\{(u_ke_k,v_ke_k):u_k,v_k\in\mathbb{R}\}$.
Given $u\in H,v\in H^{-1}$, define $u_k=\langle u,e_k\rangle_H$ and $v_k=\langle v,e_k\rangle _{H^{-1}}$, and consider $$f_\mu^\lambda(k)(t,u_k,v_k):=\langle e_k,\Pi_1 \mathcal{S}_\mu^\lambda(t)(u_ke_k,v_ke_k)\rangle _H.$$
Then $f_\mu^\lambda(k)(t,u_k,v_k)$ solves, in the case $\zeta=1$,
\begin{equation}\label{effkmu}
    \mu {f_\mu^\lambda(k)}''+{f_\mu^\lambda(k)}'+(\alpha_k+\lambda) f_\mu^\lambda(k)=0,\quad f_\mu^\lambda(k)(0)=u_k,f_\mu^\lambda(k)'(0)=v_k,
    \end{equation}
and solves, in the case $\zeta=0,$
\begin{equation}\label{effkmuzet=0}
    \mu {f_\mu^\lambda(k)}''+(\alpha_k+\lambda) f_\mu^\lambda(k)=0,\quad f_\mu^\lambda(k)(0)=u_k,f_\mu^\lambda(k)'(0)=v_k.
    \end{equation}

We will use the following estimate from \cite{salins2019smoluchowski}:

\begin{lemma}
Assume $f_\mu^\lambda(k)(t,u,v)$ is a solution to \eqref{effkmu}. Assume $\zeta=1.$ Then 

\begin{itemize}
    \item assume $u=0$ and $1-4\mu (\alpha_k+\lambda)\geq 0$. Then 
\begin{equation}\label{zeta1est1}
    |f_\mu^\lambda(k)(t,0,v)|\leq 4\mu |v|e^{-(\alpha_k+\lambda)t},
\end{equation}and 
\begin{equation}\label{zeta2est2}
    |f_\mu^\lambda(k)'(t,0,v)|\leq 2|v|e^{-(\alpha_k+\lambda) t}.
\end{equation}

\item Assume $u=0$ and $1-4\mu (\alpha_k+\lambda)\leq 0$, then 
\begin{equation}\label{zeta3est3}
    |f_\mu^\lambda(k)(t,0,v)|\leq \frac{\sqrt{4\mu}|v|}{\sqrt{\alpha_k+\lambda}}e^{-\frac{t}{4\mu}}
\end{equation}
and 
\begin{equation}\label{zeta4est4}
    |f_\mu^\lambda(k)'(t,0,v)|\leq 2|v|e^{-\frac{t}{4\mu}},
\end{equation}

\item In general, for any $k\in\mathbb{N}$, $\mu>0$ and arbitrary $u,v\in\mathbb{R}$, 
\begin{equation}\label{zeta5est5}
    \mu |f_\mu^\lambda(k)'(t,u,v)|^2+(\alpha_k+\lambda)|f_\mu^\lambda(k)(t,u,v)|^2\leq \mu|v|^2+(\alpha_k+\lambda)|u|^2.
\end{equation}

\end{itemize}

\end{lemma}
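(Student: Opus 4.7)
The plan is to solve the linear ODE $\mu f'' + f' + af = 0$ (writing $a := \alpha_k + \lambda$) directly via its characteristic polynomial. The roots of $\mu r^2 + r + a = 0$ are
$$r_\pm = \frac{-1 \pm \sqrt{1-4\mu a}}{2\mu},$$
and the three bullet points of the lemma correspond exactly to the two regimes determined by the sign of the discriminant $1-4\mu a$, followed by an energy estimate that is regime-independent.

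In the overdamped regime $1-4\mu a \geq 0$, both roots are real and negative. I would first record the elementary inequalities $-2a \leq r_+ \leq -a$ and $r_- \leq r_+$; the bound $r_+ \leq -a$ is obtained by squaring $\sqrt{1-4\mu a} \leq 1-2\mu a$, which is equivalent to $0 \leq 4\mu^2 a^2$. With initial data $u=0,\ f'(0)=v$, the explicit formula
$$f_\mu^\lambda(k)(t,0,v) = \frac{v}{r_+-r_-}\bigl(e^{r_+ t} - e^{r_- t}\bigr) = v\,e^{r_+ t}\int_0^t e^{-(r_+-r_-)s}\,ds$$
together with $e^{r_\pm t}\leq e^{-at}$ and the elementary inequality $(1-e^{-x})/x \leq 1$ gives the pointwise estimate $|f| \leq |v|\,e^{-at}\min(t,\,\mu/\sqrt{1-4\mu a})$. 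A short case split according to whether $4\mu a$ is bounded away from $1$ (use the second term in the minimum) or close to $1$ (use the first term combined with $te^{-at}\lesssim 1/a \lesssim \mu$) yields \eqref{zeta1est1} with the constant $4\mu$. Estimate \eqref{zeta2est2} is obtained analogously from the explicit formula for $f'$, using the decomposition $f'(t) = v(\alpha e^{r_+t} + \beta e^{r_-t})$ with $\alpha+\beta=1$.

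In the underdamped regime $1-4\mu a < 0$, the roots are $-1/(2\mu)\pm i\omega$ with $\omega = \sqrt{4\mu a - 1}/(2\mu)$, giving
$$f_\mu^\lambda(k)(t,0,v) = \frac{v}{\omega}\,e^{-t/(2\mu)}\sin(\omega t),\qquad f_\mu^\lambda(k)'(t,0,v) = v\,e^{-t/(2\mu)}\!\left[\cos(\omega t) - \frac{\sin(\omega t)}{2\mu\omega}\right].$$
For \eqref{zeta3est3} I would combine the trivial bound $|\sin(\omega t)|\leq 1$ with the lower bound $\omega^2 \geq a/(4\mu)$ (which is equivalent to $3\mu a\geq 1$); on the remaining small subinterval $\mu a \in (1/4,\,1/3]$ one substitutes the sharper $|\sin(\omega t)|\leq \omega t$ and notes that $te^{-t/(2\mu)}\leq (2\mu/e)e^{-t/(4\mu)}$, so that the slightly slower exponential decay $e^{-t/(4\mu)}$ (rather than the optimal $e^{-t/(2\mu)}$) provides the slack needed to make the constant $\sqrt{4\mu/a}$ uniformly valid. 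The derivative bound \eqref{zeta4est4} follows by the same decomposition.

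The energy estimate \eqref{zeta5est5} is the cleanest step: set $E(t) = \mu|f_\mu^\lambda(k)'(t)|^2 + (\alpha_k+\lambda)|f_\mu^\lambda(k)(t)|^2$. Differentiating and substituting the ODE yields
$$E'(t) = 2f_\mu^\lambda(k)'\bigl(\mu f_\mu^\lambda(k)'' + (\alpha_k+\lambda)f_\mu^\lambda(k)\bigr) = -2|f_\mu^\lambda(k)'(t)|^2 \leq 0,$$
so $E(t)\leq E(0) = \mu|v|^2 + (\alpha_k+\lambda)|u|^2$. The main obstacle is purely bookkeeping: carving the parameter space $(\mu,a)\in(0,\infty)^2$ into enough sub-regimes to make each of the stated universal constants ($4$ in \eqref{zeta1est1}, $2$ in \eqref{zeta2est2} and \eqref{zeta4est4}, and $\sqrt{4\mu}/\sqrt{\alpha_k+\lambda}$ in \eqref{zeta3est3}) come out uniformly in $k,\mu,\lambda,t$; once the regimes are laid out, every estimate reduces to one-variable calculus applied to the explicit solution formulas.
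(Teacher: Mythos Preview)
The paper does not actually prove this lemma; it merely states the estimates and attributes them to \cite{salins2019smoluchowski}. Your proposal therefore goes well beyond what the paper offers, supplying a self-contained argument via the characteristic roots of $\mu r^2+r+a=0$ together with the energy identity for \eqref{zeta5est5}. That overall strategy is the natural one and is essentially what the cited reference does.

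One step of your sketch is not quite right as written. In the overdamped regime near the critical value $4\mu a\approx 1$, you propose to use $|f|\le |v|\,t\,e^{-at}$ together with ``$te^{-at}\lesssim 1/a\lesssim \mu$''. But this chain consumes the exponential factor: it yields $|f|\lesssim \mu|v|$, not $|f|\le 4\mu|v|e^{-at}$ with the decay retained, which is what \eqref{zeta1est1} asserts. The fix is to keep the sharper exponent $r_+$ rather than $-a$. One has $|f|\le |v|\,t\,e^{r_+t}$, and when $4\mu a$ is close to $1$ (say $4\mu a\ge 15/16$) the root obeys $r_+\le -3/(8\mu)$, hence $r_+ + a\le -1/(8\mu)$; writing $e^{r_+t}=e^{(r_++a)t}e^{-at}$ and bounding $t\,e^{(r_++a)t}\le 8\mu/e<4\mu$ recovers \eqref{zeta1est1} with the exponential intact. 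The same refinement is needed for \eqref{zeta2est2}, since your convex-combination remark ``$\alpha+\beta=1$'' hides that $\alpha<0<\beta$ and $|\alpha|+|\beta|=1/\sqrt{1-4\mu a}$, which again blows up at the boundary. (A minor slip: in the underdamped argument the inequality $te^{-t/(2\mu)}\le (2\mu/e)e^{-t/(4\mu)}$ should read $4\mu/e$; this is harmless since $\mu a\le 1/3$ in that sub-case still gives $4\mu/e\le 2\sqrt{\mu/a}$.) Your energy computation for \eqref{zeta5est5} is correct as stated.
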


We have the following $\zeta=0$ version of the above estimate:

\begin{lemma}
    Assume $f_\mu^\lambda(k)(t,u,v)$ is a solution to \eqref{effkmuzet=0}. Assume $\zeta=0.$ Then 
    \begin{equation}\label{zeta0est1}
    |f_\mu^\lambda(k)(t,0,v)|\leq \frac{\sqrt{\mu}|v|}{\sqrt{\alpha_k+\lambda}}
\end{equation}
and 
\begin{equation}\label{zeta0est2}
    |f_\mu^\lambda(k)'(t,0,v)|\leq |v|.
\end{equation}
\end{lemma}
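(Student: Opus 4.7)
The plan is to proceed by explicitly solving the second-order linear ODE \eqref{effkmuzet=0}, which is just a simple harmonic oscillator since the damping term vanishes when $\zeta=0$. Setting $\omega_k := \sqrt{(\alpha_k+\lambda)/\mu}$, which is a real positive number because $\alpha_k \geq c > 0$ and $\lambda \geq 0$ by assumption $\mathbf{H1}$, the general solution to $\mu f'' + (\alpha_k+\lambda)f = 0$ is
\begin{equation*}
f_\mu^\lambda(k)(t,u,v) = u\cos(\omega_k t) + \frac{v}{\omega_k}\sin(\omega_k t).
\end{equation*}

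With the initial conditions $f(0)=0$, $f'(0)=v$ imposed (which is what the two claimed estimates require), the first term drops out and we are left with $f_\mu^\lambda(k)(t,0,v) = (v/\omega_k)\sin(\omega_k t)$, with derivative $f_\mu^\lambda(k)'(t,0,v) = v\cos(\omega_k t)$. Using $|\sin|,|\cos|\leq 1$ and substituting $1/\omega_k = \sqrt{\mu/(\alpha_k+\lambda)}$ yields \eqref{zeta0est1} and \eqref{zeta0est2} immediately.

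No genuine obstacle arises here: in contrast to the damped case $\zeta=1$, where one must split into the overdamped and underdamped regimes according to the sign of the discriminant $1-4\mu(\alpha_k+\lambda)$ and track exponential decay factors, the undamped equation has pure oscillatory solutions with no decay, and the bounds are just the trivial envelope estimates of sine and cosine. The only small point worth emphasizing is that, unlike in the $\zeta=1$ bounds \eqref{zeta1est1}--\eqref{zeta4est4}, there is no exponential factor on the right-hand side, which is consistent with the energy-conservation character of the undamped wave equation and with the general estimate \eqref{zeta5est5} (which in fact one can check directly from the explicit solution by computing $\mu|f'|^2 + (\alpha_k+\lambda)|f|^2 \equiv \mu|v|^2$).
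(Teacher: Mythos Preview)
Your proof is correct and follows essentially the same approach as the paper: write down the explicit solution $f_\mu^\lambda(k)(t,u,v)=u\cos(\omega_k t)+(v/\omega_k)\sin(\omega_k t)$ with $\omega_k=\sqrt{(\alpha_k+\lambda)/\mu}$, set $u=0$, and bound using $|\sin|,|\cos|\le 1$. The paper's proof is in fact even terser, simply recording the explicit formula and leaving the trivial estimates implicit.
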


\begin{proof}
The solutions to \eqref{effkmuzet=0} are explicitly given by 
\begin{equation}\label{explicit}
    f_\mu^\lambda(k)(t,u,v)=u\cos(\sqrt{\frac{\alpha_k+\lambda}{\mu}}t)+v\sin(\sqrt{\frac{\alpha_k+\lambda}{\mu}}t)\sqrt{\frac{\mu}{\alpha_k+\lambda}}.
\end{equation}

\end{proof}

Now we outline some simple consequences of these estimates.

\begin{lemma}\label{lemma2.3} For any $t\geq 0$, $\mu>0$, $\lambda\geq 0$ and $\zeta=\{0,1\}$, we have that 
\begin{equation}
    \|\Pi_1 \mathcal{S}_\mu^\lambda(t)\mathcal{I}_\mu\|_{\mathcal{L}(H)}\leq 4,\quad \zeta=1,
\end{equation}
and \begin{equation}
    \|\Pi_1 \mathcal{S}_\mu^\lambda(t)\mathcal{I}_\mu\|_{\mathcal{L}(H)}\leq \frac{1}{\sqrt{\alpha_1\mu}},\quad \zeta=0,
\end{equation}

\end{lemma}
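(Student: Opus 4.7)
The plan is to exploit the block-diagonal structure of $\mathcal{A}_\mu^\lambda$ on the invariant two-dimensional subspaces $\{(u_ke_k,v_ke_k):u_k,v_k\in\mathbb{R}\}$, which reduces the operator norm computation to a collection of scalar ODE estimates already collected in the preceding two lemmas.

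Concretely, given $u=\sum_k u_k e_k\in H$, I would write $\mathcal{I}_\mu u=(0,u/\mu)$ and use the invariance to obtain the mode-wise identity
\begin{equation*}
\Pi_1\mathcal{S}_\mu^\lambda(t)\mathcal{I}_\mu u \;=\; \sum_{k\geq 1} f_\mu^\lambda(k)\!\bigl(t,0,u_k/\mu\bigr)\,e_k ,
\end{equation*}
so that by orthonormality of $\{e_k\}$,
\begin{equation*}
\bigl|\Pi_1\mathcal{S}_\mu^\lambda(t)\mathcal{I}_\mu u\bigr|_H^2 \;=\; \sum_{k\geq 1}\bigl|f_\mu^\lambda(k)(t,0,u_k/\mu)\bigr|^2 .
\end{equation*}
The task reduces to obtaining, uniformly in $k$ and $t\geq 0$, a bound of the form $|f_\mu^\lambda(k)(t,0,v)|\leq C|v|$ with the correct $C$.

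For the damped case $\zeta=1$, I would split into the two regimes of \eqref{zeta1est1} and \eqref{zeta3est3}. If $1-4\mu(\alpha_k+\lambda)\geq 0$, then $|f_\mu^\lambda(k)(t,0,u_k/\mu)|\leq 4\mu\,|u_k/\mu|\,e^{-(\alpha_k+\lambda)t}\leq 4|u_k|$. If instead $1-4\mu(\alpha_k+\lambda)\leq 0$, then $\sqrt{\mu(\alpha_k+\lambda)}\geq 1/2$, so $|f_\mu^\lambda(k)(t,0,u_k/\mu)|\leq \tfrac{2|u_k|}{\sqrt{\mu(\alpha_k+\lambda)}}\,e^{-t/(4\mu)}\leq 4|u_k|$. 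Summing the mode-wise bound in $\ell^2$ yields $|\Pi_1\mathcal{S}_\mu^\lambda(t)\mathcal{I}_\mu u|_H\leq 4|u|_H$, which is the first claim. For the undamped case $\zeta=0$, a single invocation of \eqref{zeta0est1} gives $|f_\mu^\lambda(k)(t,0,u_k/\mu)|\leq \tfrac{|u_k|}{\sqrt{\mu(\alpha_k+\lambda)}}\leq \tfrac{|u_k|}{\sqrt{\mu\alpha_1}}$ after using $\alpha_k\geq\alpha_1$ and $\lambda\geq 0$, and summing produces the stated constant $1/\sqrt{\alpha_1\mu}$.

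There is no serious obstacle here; the only subtlety is the bookkeeping with the factor $1/\mu$ appearing in $\mathcal{I}_\mu$. In the damped case the critical observation is that the two bounds \eqref{zeta1est1} and \eqref{zeta3est3} have been tuned precisely so that, after applying them with $v=u_k/\mu$, the $\mu$-dependence cancels and a uniform constant survives; the case split on $1-4\mu(\alpha_k+\lambda)$ is unavoidable because neither estimate alone covers both regimes. In the undamped case no such cancellation occurs, which is the structural reason the bound must degrade as $\mu\to 0$ and must depend on $\alpha_1$.
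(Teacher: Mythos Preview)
Your proof is correct and follows essentially the same approach as the paper: reduce to the mode-wise bound $\|\Pi_1\mathcal{S}_\mu^\lambda(t)\mathcal{I}_\mu\|_{\mathcal{L}(H)}\leq \sup_k|f_\mu^\lambda(k)(t,0,1/\mu)|$, then for $\zeta=1$ split into the two regimes via \eqref{zeta1est1} and \eqref{zeta3est3}, and for $\zeta=0$ apply \eqref{zeta0est1} together with $\alpha_k+\lambda\geq\alpha_1$. Your write-up actually spells out the cancellation of the $1/\mu$ factor more explicitly than the paper does.
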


\begin{proof}
    The case $\zeta=1$ and $\lambda=0$ is proved in \cite{salins2019smoluchowski}, Lemma 5.3. By definition, $$\Pi_1 \mathcal{S}_\mu^\lambda(t)\mathcal{I}_\mu e_k=f_\mu^\lambda(k)(t,0,1/\mu)e_k.$$ As $e_k$ forms an orthogonal basis of $H$,
    $$\|\Pi_1\mathcal{S}_\mu^\lambda(t)\mathcal{I}_\mu\|_{\mathcal{L}(H)}\leq \sup_{k\in\mathbb{N}}|f_\mu^\lambda(k)(t,0,1/\mu)|.$$ Assume $\zeta=1$, for $k$ with $1-4\mu(\alpha_k+\lambda)>0$ use \eqref{zeta1est1} , and for the case $1-4\mu(\alpha_k+\lambda)\leq 0$ use \eqref{zeta3est3}. Assume $\zeta=0$, use the estimate \eqref{zeta0est1} and the fact that $\alpha_k\geq \alpha_1>0$ for each $k$.
    
\end{proof}

\begin{lemma}\label{lemma 1.4} For any $\mu>0$, $t\geq 0$, $\zeta=\{0,1\}$ and $\lambda\geq 0$,
\begin{equation}
    \|\Pi_1 \mathcal{S}_\mu^\lambda(t)\begin{pmatrix} I\\0\end{pmatrix}\|_{\mathcal{L}(H)}\leq 1.\quad \zeta=0,1.
\end{equation}

\end{lemma}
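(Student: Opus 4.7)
The plan is to reduce the operator-norm bound to a mode-by-mode estimate, exactly as in the proof of Lemma \ref{lemma2.3}, and then apply the already-recorded pointwise estimates on $f_\mu^\lambda(k)$ in the special case where the initial velocity vanishes. Specifically, for $u\in H$ with expansion $u=\sum_k u_k e_k$ (and $u_k=\langle u,e_k\rangle_H$), the action of $\mathcal{A}_\mu^\lambda$ preserves each two-dimensional subspace $\operatorname{span}\{(e_k,0),(0,e_k)\}$, so
\[
\Pi_1\mathcal{S}_\mu^\lambda(t)\begin{pmatrix}u\\0\end{pmatrix}=\sum_{k\geq 1} f_\mu^\lambda(k)(t,u_k,0)\, e_k,
\]
and by Parseval
\[
\Bigl\|\Pi_1\mathcal{S}_\mu^\lambda(t)\begin{pmatrix}u\\0\end{pmatrix}\Bigr\|_H^{2}
=\sum_{k\geq 1}\bigl|f_\mu^\lambda(k)(t,u_k,0)\bigr|^{2}.
\]
Therefore it suffices to show that $|f_\mu^\lambda(k)(t,u_k,0)|\leq |u_k|$ for every $k$, every $t\geq 0$, every $\mu>0$ and every $\lambda\geq 0$, in both cases $\zeta=0,1$.

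For $\zeta=0$, I would read off the claim immediately from the explicit formula \eqref{explicit}: with $v=0$ one gets $f_\mu^\lambda(k)(t,u_k,0)=u_k\cos\bigl(\sqrt{(\alpha_k+\lambda)/\mu}\,t\bigr)$, whose absolute value is at most $|u_k|$. For $\zeta=1$, I would apply the energy bound \eqref{zeta5est5} with initial velocity $v=0$: this yields
\[
(\alpha_k+\lambda)\,|f_\mu^\lambda(k)(t,u_k,0)|^{2}\leq (\alpha_k+\lambda)\,|u_k|^{2},
\]
and since Assumption $\mathbf{H1}$ gives $\alpha_k\geq c>0$ and $\lambda\geq 0$ guarantees $\alpha_k+\lambda>0$, we may divide through and recover $|f_\mu^\lambda(k)(t,u_k,0)|\leq |u_k|$.

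Summing over $k$ then gives $\|\Pi_1\mathcal{S}_\mu^\lambda(t)(u,0)\|_H^{2}\leq \sum_k |u_k|^{2}=\|u\|_H^{2}$, which is the asserted operator-norm bound. There is no genuine obstacle here; the proof is a direct spectral decomposition combined with the deterministic ODE estimates that were quoted from \cite{salins2019smoluchowski} and verified for the undamped case in the previous lemma. The only point worth being careful about is that the energy-type inequality \eqref{zeta5est5} is genuinely needed for $\zeta=1$ because, unlike \eqref{explicit}, one does not have a simple closed-form representation when the damping term is present; this is precisely what makes the argument uniform in $\mu$, $\lambda$ and $t$.
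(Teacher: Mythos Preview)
Your proof is correct and follows essentially the same approach as the paper: reduce to the mode-wise bound $\sup_k|f_\mu^\lambda(k)(t,1,0)|\leq 1$, then invoke \eqref{explicit} for $\zeta=0$ and the energy estimate \eqref{zeta5est5} for $\zeta=1$. The paper states the operator norm directly as the supremum of the diagonal entries rather than going through Parseval, but this is a cosmetic difference.
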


\begin{proof}
    In both cases $\zeta=0,1$, we have
    \begin{equation}
    \|\Pi_1 \mathcal{S}_\mu^\lambda(t)\begin{pmatrix} I\\0\end{pmatrix}\|_{\mathcal{L}(H)}=\sup_{k\in\mathbb{N}}|f_\mu^\lambda(k)(t,1,0)|.
\end{equation}
In the case $\zeta=1$ this term is less than or equal to 1 thanks to \eqref{zeta5est5}. In the case $\zeta=0$ this is less than or equal to 1 thanks to \eqref{explicit}.
\end{proof}

\begin{lemma}\label{lemma1.5!}  In the case $\zeta=1$,
    consider $N_\mu^\lambda=\max\{k\in\mathbb{N}:1-4\mu(\alpha_k+\lambda)\geq 0\}$, and denote by $P_{N_\mu^\lambda}$ the orthogonal projection onto the subspace spanned by $e_1,\cdots,e_{N_\mu^\lambda}$. Then for any $t>0$,
    \begin{equation}\label{lemma2.51}
        \|\Pi_1 \mathcal{S}_\mu^\lambda(t)\begin{pmatrix} 0\\P_{N_\mu^\lambda}\end{pmatrix}\|_{\mathcal{L}(H)}\leq 4\mu,\quad \zeta=1
    \end{equation}
and 
     \begin{equation}\label{lemma2.52}
        \|\Pi_1 \mathcal{S}_\mu^\lambda(t)\begin{pmatrix} 0\\I-P_{N_\mu^\lambda}\end{pmatrix}\|_{\mathcal{L}(H^{-1},H)}\leq \sqrt{4\mu},\quad \zeta=1.
    \end{equation}
We also have the following $\zeta=0$ version: for any $t\geq 0,\lambda>0$ and $\mu>0$,
\begin{equation}
    \|\Pi_1 \mathcal{S}_\mu^\lambda(t)\begin{pmatrix} 0\\I\end{pmatrix}\|_{\mathcal{L}(H^{-1},H)}\leq \sqrt{\mu},\quad\zeta=0.
\end{equation}
\end{lemma}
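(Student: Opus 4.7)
The plan is to exploit the fact that $\mathcal{S}_\mu^\lambda(t)$ acts diagonally in the basis of pairs $\{(e_k,0),(0,e_k)\}_{k\geq 1}$, so the entire lemma reduces to the scalar pointwise estimates \eqref{zeta1est1}--\eqref{zeta4est4} for \eqref{effkmu} and \eqref{zeta0est1}--\eqref{zeta0est2} for \eqref{effkmuzet=0}. Concretely, for any $w = \sum_k w_k e_k$ in the appropriate Hilbert scale, the diagonalization gives
\begin{equation*}
\Pi_1 \mathcal{S}_\mu^\lambda(t)\begin{pmatrix}0\\ w\end{pmatrix}=\sum_k f_\mu^\lambda(k)(t,0,w_k)\, e_k,
\end{equation*}
so each operator-norm bound becomes an $\ell^2$-type estimate on the sequence $\bigl(f_\mu^\lambda(k)(t,0,w_k)\bigr)_k$.

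For \eqref{lemma2.51} the input $(0,P_{N_\mu^\lambda}w)$ activates only the low modes $k\leq N_\mu^\lambda$, which by definition of $N_\mu^\lambda$ satisfy $1-4\mu(\alpha_k+\lambda)\geq 0$. Hence estimate \eqref{zeta1est1} applies and gives $|f_\mu^\lambda(k)(t,0,w_k)|\leq 4\mu|w_k|e^{-(\alpha_k+\lambda)t}\leq 4\mu|w_k|$. Squaring, summing over $k\leq N_\mu^\lambda$ and using $\sum_k w_k^2 \leq |w|_H^2$ yields the factor $4\mu$ for the $\mathcal{L}(H)$-norm.

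For \eqref{lemma2.52} the input $(0,(I-P_{N_\mu^\lambda})w)$ activates only high modes $k>N_\mu^\lambda$, i.e.\ $1-4\mu(\alpha_k+\lambda)\leq 0$, so I apply \eqref{zeta3est3} to get $|f_\mu^\lambda(k)(t,0,w_k)|\leq \sqrt{4\mu}\,|w_k|/\sqrt{\alpha_k+\lambda}\,e^{-t/(4\mu)}$. Squaring, summing, and using $\alpha_k+\lambda\geq\alpha_k$ yields
\begin{equation*}
\sum_{k>N_\mu^\lambda}|f_\mu^\lambda(k)(t,0,w_k)|^2 \leq 4\mu\sum_k \frac{w_k^2}{\alpha_k} = 4\mu\,|w|_{H^{-1}}^2,
\end{equation*}
which produces the bound $\sqrt{4\mu}$ for the $\mathcal{L}(H^{-1},H)$-norm. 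The $\zeta=0$ case is handled analogously: estimate \eqref{zeta0est1} applies uniformly in $k$, giving $|f_\mu^\lambda(k)(t,0,w_k)|\leq \sqrt{\mu}\,|w_k|/\sqrt{\alpha_k+\lambda}$, and then summing and using $|w|_{H^{-1}}^2 = \sum_k \alpha_k^{-1}w_k^2$ yields the $\sqrt{\mu}$ bound.

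There is no substantive obstacle here; the only point requiring attention is the switch between the $H$-norm (for low modes, via \eqref{zeta1est1}) and the $H^{-1}$-norm (for high modes, via \eqref{zeta3est3}). This switch is dictated by the compensating factor $1/\sqrt{\alpha_k+\lambda}$ in the high-mode estimate, which exactly matches the $\alpha_k^{-1}$ weight appearing in the definition of the $H^{-1}$-norm, and likewise for the $\zeta=0$ estimate.
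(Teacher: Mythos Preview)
Your proof is correct and follows essentially the same approach as the paper: both exploit the diagonal action of $\mathcal{S}_\mu^\lambda(t)$ on the eigenbasis to reduce the operator norms to the scalar estimates \eqref{zeta1est1}, \eqref{zeta3est3}, \eqref{zeta0est1}. The paper phrases this as bounding $\sup_{k}$ of the relevant diagonal entries (e.g.\ $\sup_{k\leq N_\mu^\lambda}|f_\mu^\lambda(k)(t,0,1)|$ and $\sup_{k>N_\mu^\lambda}\sqrt{\alpha_k}\,|f_\mu^\lambda(k)(t,0,1)|$) rather than writing out the $\ell^2$ sums explicitly as you do, but the content is identical.
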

    
\begin{proof}
    The first two claims are proved in Lemma 5.5 and 5.6 of \cite{salins2019smoluchowski} when $\lambda=0$. The first expression is bounded by 
    $\sup_{k\leq N_\mu^\lambda}|f_\mu^\lambda(k)(t,0,1)|\leq 4\mu$, via applying \eqref{zeta1est1}. The second expression is bounded by $$\sup_{k> N_\mu^\lambda }\sqrt{\alpha_k}|f_\mu^\lambda(k)(t,0,1)|\leq \sqrt{4\mu}$$ via applying \eqref{zeta3est3}. The last expression (with $\zeta=0$) is bounded by $$\sup_{k\geq 1}\sqrt{\alpha_k}|f_\mu^\lambda(k)(t,0,1)|\leq \sqrt{\mu}$$ via applying \eqref{zeta0est1}.

\end{proof}

\begin{lemma}\label{lemma1.6!}
For any $\mu\in(0,1)$, we have
\begin{equation}\label{1.22go}
    \|\mathcal{S}_\mu^\lambda(t)\|_{\mathcal{L}(\mathcal{H})}\leq \mu^{-1/2}\sup_k \sqrt{\frac{\alpha_k+\lambda}{\alpha_k}}, \quad \zeta=0,1.
\end{equation}
\end{lemma}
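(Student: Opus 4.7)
The plan is to exploit the mode-by-mode structure of $\mathcal{S}_\mu^\lambda$. Because $A$ is diagonal in the orthonormal basis $\{e_k\}$, the operator $\mathcal{A}_\mu^\lambda$ leaves each two-dimensional subspace $V_k:=\operatorname{span}\{(e_k,0),(0,e_k)\}\subset\mathcal{H}$ invariant, and the $V_k$'s are mutually orthogonal in $\mathcal{H}=H\times H^{-1}$ (with the weight $\alpha_k^{-1}$ in the second slot built into the definition of $H^{-1}$). On $V_k$ the semigroup acts as $(u_k e_k,v_k e_k)\mapsto(f_\mu^\lambda(k)(t,u_k,v_k)e_k,\,f_\mu^\lambda(k)'(t,u_k,v_k)e_k)$, so proving the bound reduces to a mode-wise estimate of $|f_\mu^\lambda(k)|^2+\alpha_k^{-1}|f_\mu^\lambda(k)'|^2$ in terms of $u_k^2+\alpha_k^{-1}v_k^2$ which then sums up.

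The key analytic input is the energy estimate
$$\mu|f_\mu^\lambda(k)'|^2+(\alpha_k+\lambda)|f_\mu^\lambda(k)|^2\leq\mu v_k^2+(\alpha_k+\lambda)u_k^2.$$
For $\zeta=1$ this is exactly \eqref{zeta5est5}. For $\zeta=0$, the explicit formula \eqref{explicit} gives this identity with equality after a short trigonometric computation using $\omega_k^2=(\alpha_k+\lambda)/\mu$ (so that the cross terms cancel and $\sin^2+\cos^2$ telescopes on each side). This verification of the undamped case is the only step not already recorded in the preceding lemmas, and it is the mild technical obstacle; everything else is bookkeeping.

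Once the energy estimate is in hand, I divide through by $\mu\alpha_k$ and exploit the assumption $\mu\in(0,1)$ together with $\lambda\geq 0$ to obtain $\tfrac{\alpha_k+\lambda}{\mu\alpha_k}\geq 1$. This lets me absorb the coefficient $1$ in front of $|f_\mu^\lambda(k)|^2$ into the factor $\tfrac{\alpha_k+\lambda}{\mu\alpha_k}$, and analogously dominate $\alpha_k^{-1}v_k^2$ by $\tfrac{\alpha_k+\lambda}{\mu\alpha_k}\cdot\alpha_k^{-1}v_k^2$ (valid since $\mu\alpha_k\leq\alpha_k\leq\alpha_k+\lambda$). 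These two absorptions collapse the inequality into the clean mode-wise form
$$|f_\mu^\lambda(k)|^2+\alpha_k^{-1}|f_\mu^\lambda(k)'|^2\leq\tfrac{\alpha_k+\lambda}{\mu\alpha_k}\bigl(u_k^2+\alpha_k^{-1}v_k^2\bigr).$$
Summing over $k$ and bounding the prefactor by its supremum $\sup_k\tfrac{\alpha_k+\lambda}{\mu\alpha_k}=\mu^{-1}\sup_k\tfrac{\alpha_k+\lambda}{\alpha_k}$ yields $|\mathcal{S}_\mu^\lambda(t)(u,v)|_{\mathcal{H}}^2\leq \mu^{-1}\sup_k\tfrac{\alpha_k+\lambda}{\alpha_k}\,|(u,v)|_{\mathcal{H}}^2$, which is the square of the stated bound.
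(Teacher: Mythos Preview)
Your argument is correct and follows essentially the same route as the paper's proof: both reduce to the mode-wise energy estimate $\mu|f_\mu^\lambda(k)'|^2+(\alpha_k+\lambda)|f_\mu^\lambda(k)|^2\leq\mu v_k^2+(\alpha_k+\lambda)u_k^2$ (quoted as \eqref{zeta5est5} for $\zeta=1$ and verified from \eqref{explicit} for $\zeta=0$), then combine it with $\mu\in(0,1)$ and $\lambda\geq 0$ to compare with the $\mathcal{H}$-norm. The only cosmetic difference is that the paper first multiplies $|\mathcal{S}_\mu^\lambda(t)(u,v)|_{\mathcal{H}}^2$ by $\mu$ and then divides the energy identity by $\alpha_k$, whereas you divide the energy identity by $\mu\alpha_k$ directly; the inequalities used ($\frac{\alpha_k+\lambda}{\alpha_k}\geq 1$ and $\mu\leq 1$) and the final bound are identical.
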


\begin{proof}
    The result is proved for $\zeta=1$ and $\lambda=0$ in \cite{salins2019smoluchowski}, Lemma 5.7. We provide a complete proof in our more general case.
    
By definition of $\mathcal{H}$, for $(u,v)\in\mathcal{H}$ and any $t\geq 0$,

$$
 \mu |\mathcal{S}_\mu^\lambda(t)(u,v)\|_\mathcal{H}^2\leq \mu \|\Pi_2 \mathcal{S}_\mu^\lambda(t)(u,v)|_{H^{-1}}^2+|\Pi_1 \mathcal{S}_\mu^\lambda(t)(u,v)|_H^2.$$
From the Fourier decomposition it suffices to bound 
$$\sum_{k=1}^\infty \left(\frac{\mu}{\alpha_k}|f_\mu^\lambda(k)'(t,u_k,v_k)|^2+|f_\mu^\lambda(k)(t,u_k,v_k)|^2
\right).
 $$
 First consider the case $\zeta=0$. From \eqref{explicit}, we see that it is bounded by 
 
 $$\sup_k \frac{\alpha_k+\lambda}{\alpha_k}(\sum_{k=1}^\infty \frac{\mu}{\alpha_k}|v_k|^2+|u_k|^2)\leq \sup_k\frac{\alpha_k+\lambda}{\alpha_k} (u,v)_\mathcal{H}.$$
 
 Then consider the case $\zeta=1$. It suffices to use \eqref{zeta5est5} and we get the same estimate as above.
\end{proof}

\subsection{Estimates in shifted norm}
As will be made clear afterwards, for general $\lambda>0$ it is more convenient to re-define the norm of the interpolation space $H^\delta$. Now we define, for each fixed $\lambda>0$, $H^\delta(\lambda)$ as the completion of the vector space $\operatorname{Vect}(e_1,\cdots,e_k,\cdots)$ with respect to the norm 
$$ |f|_{H^\delta(\lambda)}^2:=\sum_{k=1}^\infty (\alpha_k+\lambda)^\delta \langle f,e_k\rangle_H^2.
$$ This new norm of $H^\delta(\lambda)$ is equivalent to the norm of $H^\delta$ for any $\lambda>0$, but it will be more convenient to use.

We now consider the phase space $$\mathcal{H}(\lambda):= H^0(\lambda)\times H^{-1}(\lambda)=H\times H^{-1}(\lambda).$$
Note that $H=H^0(\lambda)$ so the norm in the first component of $\mathcal{H}$ is the same as that of $\mathcal{H}(\lambda)$. Now we restate many of our previous estimates in the new norms $H^{-1}(\lambda)$ and $\mathcal{H}_\lambda$:

\begin{lemma}\label{lemma1.7!}
    With the same notation as Lemma \ref{lemma1.5!} and \ref{lemma1.6!}, we have
    \begin{equation}
        \|\Pi_1 \mathcal{S}_\mu^\lambda(t)\begin{pmatrix} 0\\I-P_{N_\mu^\lambda}\end{pmatrix}\|_{\mathcal{L}(H^{-1}(\lambda),H)}\leq \sqrt{4\mu},\quad \zeta=1.
    \end{equation}
and
\begin{equation}
    \|\Pi_1 \mathcal{S}_\mu^\lambda(t)\begin{pmatrix} 0\\I\end{pmatrix}\|_{\mathcal{L}(H^{-1}(\lambda),H)}\leq \sqrt{\mu},\quad\zeta=0.
\end{equation}
Moreover, for any $\mu\in(0,1),$ we have
\begin{equation}
    \|\mathcal{S}_\mu^\lambda(t)\|_{\mathcal{L}(\mathcal{H}(\lambda))}\leq \mu^{-1/2}, \quad \zeta=0,1.
\end{equation}
\end{lemma}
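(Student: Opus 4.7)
The plan is to repeat the Fourier-decomposition arguments used in Lemmas \ref{lemma1.5!} and \ref{lemma1.6!}, but with the weights $\alpha_k$ in the definitions of $H^{-1}$ and $\mathcal{H}$ replaced by $\alpha_k+\lambda$. The key observation is that the pointwise bounds \eqref{zeta3est3}, \eqref{zeta0est1}, and the energy identity \eqref{zeta5est5} are already stated in terms of $\alpha_k+\lambda$, so the shifted norms are in fact the natural choice for these estimates and we expect the $\sup_k\sqrt{(\alpha_k+\lambda)/\alpha_k}$ factor present in Lemma \ref{lemma1.6!} to disappear.

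For the first inequality, I would expand in modes: for $v\in H^{-1}(\lambda)$,
\begin{equation*}
\bigl|\Pi_1 \mathcal{S}_\mu^\lambda(t)(0,(I-P_{N_\mu^\lambda})v)\bigr|_H^2 = \sum_{k>N_\mu^\lambda} |f_\mu^\lambda(k)(t,0,v_k)|^2.
\end{equation*}
Since each index $k>N_\mu^\lambda$ satisfies $1-4\mu(\alpha_k+\lambda)<0$, estimate \eqref{zeta3est3} applies and yields $|f_\mu^\lambda(k)(t,0,v_k)|^2 \le 4\mu\,|v_k|^2/(\alpha_k+\lambda)$, which sums to $4\mu\,|v|_{H^{-1}(\lambda)}^2$. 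The second inequality is completely analogous using \eqref{zeta0est1} in the $\zeta=0$ case.

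For the third inequality, following the proof of Lemma \ref{lemma1.6!}, I would write
\begin{equation*}
\mu\,\|\mathcal{S}_\mu^\lambda(t)(u,v)\|_{\mathcal{H}(\lambda)}^2 = \sum_{k\ge 1}\Bigl(\mu\,|f_\mu^\lambda(k)(t,u_k,v_k)|^2 + \tfrac{\mu}{\alpha_k+\lambda}|f_\mu^\lambda(k)'(t,u_k,v_k)|^2\Bigr).
\end{equation*}
In the case $\zeta=1$ divide \eqref{zeta5est5} by $\alpha_k+\lambda$ and multiply by $\mu$ to obtain termwise
\begin{equation*}
\tfrac{\mu^2}{\alpha_k+\lambda}|f_\mu^\lambda(k)'|^2 + \mu |f_\mu^\lambda(k)|^2 \le \tfrac{\mu^2}{\alpha_k+\lambda}|v_k|^2 + \mu|u_k|^2,
\end{equation*}
and the right-hand side is bounded by $|u_k|^2 + \mu|v_k|^2/(\alpha_k+\lambda)$ since $\mu\in(0,1)$. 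In the case $\zeta=0$, the explicit formula \eqref{explicit} gives the exact energy identity $\mu\,|f_\mu^\lambda(k)'|^2/(\alpha_k+\lambda) + |f_\mu^\lambda(k)|^2 = \mu|v_k|^2/(\alpha_k+\lambda) + |u_k|^2$ by a $\sin^2+\cos^2$ computation. Summing over $k$ in either case yields $\mu\,\|\mathcal{S}_\mu^\lambda(t)(u,v)\|_{\mathcal{H}(\lambda)}^2 \le \|(u,v)\|_{\mathcal{H}(\lambda)}^2$, giving the claim.

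I do not anticipate a real obstacle: the whole content of the lemma is that passing from $\alpha_k$ to $\alpha_k+\lambda$ in the definition of the interpolation norms automatically absorbs the shift $\lambda$ appearing inside the semigroup, so the bounds inherit the same numerical constants as in the unshifted case of Lemmas \ref{lemma1.5!} and \ref{lemma1.6!}, but without the factor $\sup_k\sqrt{(\alpha_k+\lambda)/\alpha_k}$. The only care needed is to keep track of the restriction $\mu\in(0,1)$ when combining the $|u_k|^2$ and $\mu|v_k|^2/(\alpha_k+\lambda)$ contributions into a single $\|\cdot\|_{\mathcal{H}(\lambda)}^2$ term.
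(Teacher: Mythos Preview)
Your approach is the same as the paper's, and the first two inequalities as well as the $\zeta=0$ part of the third are handled correctly. However, there is a small slip in your treatment of the $\zeta=1$ case of the third inequality: after dividing \eqref{zeta5est5} by $\alpha_k+\lambda$ \emph{and} multiplying by $\mu$, the left-hand side becomes
\[
\tfrac{\mu^2}{\alpha_k+\lambda}|f_\mu^\lambda(k)'|^2 + \mu |f_\mu^\lambda(k)|^2,
\]
which is \emph{smaller} than the quantity $\tfrac{\mu}{\alpha_k+\lambda}|f_\mu^\lambda(k)'|^2 + \mu |f_\mu^\lambda(k)|^2$ you actually need to control (since $\mu^2<\mu$), so the displayed inequality does not bound the correct object.

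The fix is immediate and is exactly what you did in the $\zeta=0$ case: simply divide \eqref{zeta5est5} by $\alpha_k+\lambda$ (do not multiply by $\mu$) to get
\[
\tfrac{\mu}{\alpha_k+\lambda}|f_\mu^\lambda(k)'|^2 + |f_\mu^\lambda(k)|^2 \le \tfrac{\mu}{\alpha_k+\lambda}|v_k|^2 + |u_k|^2,
\]
and then observe that $\mu|f_\mu^\lambda(k)|^2 \le |f_\mu^\lambda(k)|^2$ since $\mu\in(0,1)$. This is precisely how the paper (and Lemma~\ref{lemma1.6!}) proceeds: it first uses $\mu\le 1$ to replace $\mu\|\cdot\|_{\mathcal{H}(\lambda)}^2$ by $\mu|\Pi_2\cdot|_{H^{-1}(\lambda)}^2 + |\Pi_1\cdot|_H^2$, and only then invokes the energy bound. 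With this correction your argument goes through and matches the paper's.
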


\begin{proof}
    The proof is exactly the same. Thanks to \eqref{zeta3est3} and \eqref{zeta0est1}, we have $$\sup_{k> N_\mu^\lambda }\sqrt{\alpha_k+\lambda}\left|f_\mu^\lambda(k)(t,0,1)\right|\leq \sqrt{4\mu},\quad\zeta=1$$ and
    $$\sup_{k\geq 1}\sqrt{\alpha_k+\lambda}\left|f_\mu^\lambda(k)(t,0,1)\right|\leq \sqrt{\mu},\quad\zeta=0.$$ The last claim follows again from \eqref{zeta5est5} when $\zeta=1$ and \eqref{explicit} when $\zeta=0,$ while the fact that we have $\alpha_k+\lambda$ instead of $\alpha_k$ in the definition of the norm of $\mathcal{H}(\lambda)$ allows us to get rid of the $\sup_k\sqrt{\frac{\alpha_k+\lambda}{\alpha_k}}$ factor in \eqref{1.22go}.
\end{proof}

\subsection{Maximal inequalities of stochastic integrals}
In this paper we will extensively use maximal inequalities of stochastic integrals of the following form 
\begin{equation}\label{Gammamulambda}
\Gamma^{\mu,\lambda}(t)=
\int_0^t \mathcal{S}_\mu^\lambda(t-s)\mathcal{I}_\mu \Phi(s)dW_s
\end{equation}
for some adapted, self adjoint process $\Phi\in L^p(\Omega;L^\infty([0,T];\mathcal{L}(H_0,H)))$. When we apply the estimates related to $\Gamma^{\mu,\lambda}$, we will often take $\Phi(s)=G(s,u(s))$ or  $\Phi(s)=G(s,u(s))-G^n(s,u^n(s))$ for some approximations $G^n$ of $G$. By assumption \eqref{adjoint} and our approximating procedure, these choices will make $\Phi(s)$ a self-adjoint adapted processes. In the rest of this paper we will assume self adjointness of $\Phi$ without mentioning.

We will use the factorization formula (\cite{da2014stochastic},Section 5.3.1) for stochastic convolutions which gives, for $0<\alpha<1$ (which we will later choose to be sufficiently small),
\begin{equation}\label{uppergamma}
    \Gamma^{\mu,\lambda}(t)=\frac{\sin(\alpha\pi)}{\pi}\int_0^t (t-s)^{\alpha-1}\mathcal{S}_\mu^\lambda(t-s)\Gamma_\alpha^{\mu,\lambda}(s)ds,
\end{equation}
where
\begin{equation}\Gamma_\alpha^{\mu,\lambda}(t)=\int_0^t (t-s)^{-\alpha}\mathcal{S}_\mu^\lambda(t-s)\mathcal{I}_\mu\Phi(s)dW_s.
\end{equation}

We first prove maximal inequality estimates for $\Gamma_\alpha^{\mu,\lambda}$. The proof is similar to \cite{salins2019smoluchowski}, Lemma 6.1 but we make a better use of the summability of $\sum_n\frac{1}{\lambda_n^\sigma}$, $\sigma<1$, and our result can be re-interpreted as a maximal inequality of Ornstein-Uhlenbeck process when $\lambda$ is very large. 

\begin{notation}
We abbreviate, for any $T>0$,$$\|\Phi\|_T:=\sup_{0\leq t\leq T}|\Phi(t)|_{\mathcal{L}(H_0,H)}.$$
\end{notation}

\begin{proposition}\label{proposition 1.8} Fix $T>0$.
Assume that there is some $\eta_0\in(0,1)$ such that \begin{equation}\label{summoperator}
\sum_n \frac{1}{\alpha_n^{1-\eta}}<\infty
\end{equation}
for each $\eta\in(0,\eta_0)$. Then for each $p\geq 2$ and each $\eta\in(0,\eta_0),$ for both $\zeta=0$ and $\zeta=1,$
\begin{equation}
    \mathbb{E}[|\Pi_1\Gamma_\alpha^{\mu,\lambda}(t)\|_H^p]\leq C_{\eta,p,\mu}\|\Phi\|_t^p \lambda^{-\frac{\eta p}{2}},\quad t\in[0,T].
\end{equation} The constant $c_{\eta,p,\mu}<\infty$ depends on $\eta$, $p$, $\mu$ and $T$.

\end{proposition}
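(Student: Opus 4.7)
My plan is first to apply the Burkholder-Davis-Gundy inequality to reduce the $L^p$ moment of $\Pi_1 \Gamma_\alpha^{\mu,\lambda}(t)$ to a deterministic integral of Hilbert-Schmidt norms, and then to diagonalize in the eigenbasis $\{e_j\}_{j\geq 1}$. Writing $\Psi(s) = (t-s)^{-\alpha} \Pi_1 \mathcal{S}_\mu^\lambda(t-s) \mathcal{I}_\mu \Phi(s)$, BDG gives
\begin{equation*}
\mathbb{E}\bigl[|\Pi_1\Gamma_\alpha^{\mu,\lambda}(t)|_H^p\bigr] \leq C_p\, \mathbb{E}\Bigl[\Bigl(\int_0^t \|\Psi(s)\|_{HS}^2\,ds\Bigr)^{p/2}\Bigr].
\end{equation*}
The propagation rule $\Pi_1 \mathcal{S}_\mu^\lambda(r)(0, e_j/\mu) = f_\mu^\lambda(j)(r, 0, 1/\mu)\, e_j$, combined with the self-adjointness \eqref{adjoint} of $\Phi(s)$, Parseval's identity, and the uniform bound $|e_j|_{H_0} \leq M$ from $\mathbf{H}1$, yields
\begin{equation*}
\|\Psi(s)\|_{HS}^2 = (t-s)^{-2\alpha} \sum_{j\geq 1} |\Phi(s) e_j|_H^2\, |f_\mu^\lambda(j)(t-s, 0, 1/\mu)|^2 \leq M^2 |\Phi(s)|_{\mathcal{L}(H_0,H)}^2 (t-s)^{-2\alpha} \sum_{j\geq 1} |f_\mu^\lambda(j)(t-s, 0, 1/\mu)|^2.
\end{equation*}
Pulling the supremum $\|\Phi\|_t$ outside the time integral and changing variables $r = t-s$, the task reduces to proving $\sum_j \int_0^t r^{-2\alpha}|f_\mu^\lambda(j)(r,0,1/\mu)|^2\,dr \leq C_{\mu,\eta}\lambda^{-\eta}$.

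For each mode $j$ I aim for the uniform single-mode bound $\int_0^t r^{-2\alpha}|f_\mu^\lambda(j)(r,0,1/\mu)|^2\,dr \leq C_{\mu,T,\alpha}(\alpha_j+\lambda)^{-(1-2\alpha)}$. In the damped case $\zeta=1$ this splits by the sign of $1-4\mu(\alpha_j+\lambda)$: when it is nonnegative I use the exponential decay \eqref{zeta1est1}, whose integral against $r^{-2\alpha}$ is $\Gamma(1-2\alpha)(2(\alpha_j+\lambda))^{2\alpha-1}$ and already has the target form; when it is negative I use \eqref{zeta3est3}, which supplies $C\mu^{-2\alpha}/(\alpha_j+\lambda)$, and I exchange the $\mu^{-2\alpha}$ for $(\alpha_j+\lambda)^{-2\alpha}$ via the defining inequality $\alpha_j+\lambda > 1/(4\mu)$ of this regime. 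In the undamped case $\zeta=0$, \eqref{zeta0est1} gives $|f_\mu^\lambda(j)|^2 \leq 1/(\mu(\alpha_j+\lambda))$ pointwise; the $r$-integral over $[0,T]$ contributes only a finite factor, and the uniform lower bound $\alpha_j+\lambda \geq c$ from $\mathbf{H}1$ converts $(\alpha_j+\lambda)^{-1}$ to $(\alpha_j+\lambda)^{-(1-2\alpha)}$ at the price of a constant.

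Given $\eta \in (0, \eta_0)$, I then choose $\alpha$ so small that $2\alpha + \eta < \eta_0$. The interpolation
\begin{equation*}
(\alpha_j + \lambda)^{1-2\alpha} = (\alpha_j+\lambda)^{\eta}(\alpha_j+\lambda)^{1-2\alpha-\eta} \geq \lambda^{\eta}\, \alpha_j^{1-(2\alpha+\eta)},
\end{equation*}
valid because $1-2\alpha-\eta > 0$, combined with assumption \eqref{intermediatesum} applied at $\eta' := 2\alpha+\eta \in (0,\eta_0)$, delivers $\sum_j (\alpha_j+\lambda)^{-(1-2\alpha)} \leq C_\eta \lambda^{-\eta}$. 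Raising the resulting pathwise bound to the $p/2$-th power inside the BDG inequality completes the proof. The main obstacle is the single-mode estimate: the two regimes in the damped case behave very differently (exponential decay in $(\alpha_j+\lambda)$ versus in $1/\mu$), and the key to reconciling them into a single algebraic profile $(\alpha_j+\lambda)^{-(1-2\alpha)}$ is to use the defining threshold $\alpha_j + \lambda \simeq 1/\mu$ of the regime split to trade $\mu^{-2\alpha}$ against $(\alpha_j+\lambda)^{-2\alpha}$; once this is in place, the summation is a routine interpolation tuned to the summability window provided by $\mathbf{H}1$.
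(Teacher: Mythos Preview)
Your proof is correct and follows essentially the same route as the paper: BDG inequality, diagonalization in the eigenbasis using the self-adjointness of $\Phi$, and then exploiting the summability hypothesis on the $\alpha_k$. The organizational difference is minor but worth noting. The paper first bounds the pointwise mode sum $\sum_k |f_\mu^\lambda(k)(r,0,1/\mu)|^2$ by $J_1(r)+J_2(r)$ (resp.\ $J_3$) and then integrates in $r$, invoking for the low-mode part the inequality $e^{-2(\alpha_k+\lambda)r}\le c_r((\alpha_k+\lambda)r)^{-r}$ with an auxiliary exponent $r$ close to $1$, and Young's inequality $\alpha_k+\lambda\ge c_\eta\alpha_k^{1-\eta}\lambda^\eta$ to split off the power of $\lambda$. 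You instead integrate each mode first to obtain the uniform profile $\int_0^t r^{-2\alpha}|f_\mu^\lambda(j)|^2\,dr\le C_{\mu}(\alpha_j+\lambda)^{-(1-2\alpha)}$ and only then sum, using the simpler interpolation $(\alpha_j+\lambda)^{1-2\alpha}\ge \lambda^\eta\alpha_j^{1-2\alpha-\eta}$. Your version avoids the extra parameter $r$ and the appeal to Young, at the cost of needing to handle the high-mode regime via the threshold $\alpha_j+\lambda>1/(4\mu)$ to trade $\mu^{-2\alpha}$ against $(\alpha_j+\lambda)^{2\alpha}$; this is a clean way to reconcile the two decay regimes into a single algebraic profile. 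Both arguments implicitly require $\alpha$ small depending on $\eta$ (in your case $2\alpha+\eta<\eta_0$), which is consistent with how the proposition is used downstream in the factorization argument.
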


\begin{proof}
    Applying the  Burkholder-Davis-Gundy inequality to $\Gamma_\alpha^{\mu,\lambda}$:
\begin{equation}
    \mathbb{E}[\|\Pi_1\Gamma_\alpha^{\mu,\lambda}(t)\|_H^p]\leq C\mathbb{E}\left( \sum_{j=1}^\infty\int_0^t (t-s)^{-2\alpha}|\Pi_1\mathcal{S}_\mu^\lambda(t-s)\mathcal{I}_\mu\Phi(s)e_j|_H^2ds\right)^{p/2}.
\end{equation}
 The quadratic variation
$$\begin{aligned}
\Lambda_\alpha^{\mu,\lambda}(t):&=\sum_{j=1}^\infty \int_0^t (t-s)^{-2\alpha}|\Pi_1\mathcal{S}_\mu^\lambda(t-s)\mathcal{I}_\mu \Phi(s)e_j|_H^2ds\\
&=\sum_{k=1}^\infty \sum_{j=1}^\infty \int_0^t (t-s)^{-2\alpha}\langle\Pi_1\mathcal{S}_\mu^\lambda(t-s)\mathcal{I}_\mu\Phi(s)e_j,e_k\rangle_H^2ds\\
&=\sum_{k=1}^\infty\sum_{j=1}^\infty\int_0^t (t-s)^{-2\alpha}\langle \Phi(s)e_j,\mathcal{I}_\mu^*S_\mu^{\lambda*}(t-s)\Pi_1^*e_k\rangle_H^2ds
\end{aligned}
$$

Since $$\begin{aligned}
\langle\mathcal{I}_\mu^{*}\mathcal{S}_\mu^{\lambda*}(t)\Pi_1^*e_k,e_j\rangle_H&=\langle\Pi_1\mathcal{S}_\mu^\lambda(t)\mathcal{I}_\mu  e_j,e_k\rangle_H\\&=
\begin{cases}f_\mu^\lambda(k)(t,0,1/\mu)\quad j=k\\0\quad j\neq k,\end{cases}
\end{aligned}$$ we further simplify $\Lambda_\alpha^{\mu,\lambda}(t)$ as, abbreviating $f_\mu^\lambda(k)(t):=f_\mu^\lambda(k)(t,0,1/\mu)$,

$$
\Lambda_\alpha^{\mu,\lambda}(t)=\sum_{k=1}^\infty\sum_{j= 1}^\infty\int_0^t (t-s)^{-2\alpha}(f_\mu^\lambda(k)(t-s))^2\langle \Phi(s)e_j,e_k\rangle_H^2ds.
$$
By self-adjointness of $\Phi$ we have, using $e_k\in H_0$ with $|e_k|_{H_0}\leq M$,
$$\sum_{j=1}^\infty \langle \Phi(s)e_j,e_k\rangle_H^2=\sum_{j=1}^\infty \langle \Phi(s)e_k,e_j\rangle_H^2=|\Phi(s)e_k|_H^2\leq M|\Phi(s)|^2_{\mathcal{L}(H_0,H)}.$$

    Thus we bound $\Lambda_\alpha^{\mu,\lambda}(t)$ as follows:
    \begin{equation}\label{Lambdamulambda}
\Lambda_\alpha^{\mu,\lambda}(t)\leq\int_0^t (t-s)^{-2\alpha}\|\Phi(s)\|_t^2\left(\sum_{k=1}^\infty f_\mu^\lambda(k)^2(t-s,0,1/\mu)
\right)ds.
    \end{equation}

In the following we find effective upper bounds for $$\sum_{k=1}^\infty f_\mu^\lambda(k)^2(t,0,1/\mu).$$

For the case $\zeta=1$, thanks to \eqref{zeta1est1}, \eqref{zeta3est3}, this is bounded by 
$$\sum_{k=1}^{N_\mu^\lambda}e^{-2(\alpha_k+\lambda)t}+\sum_{k=N_\mu^\lambda+1}^\infty \frac{e^{-\frac{t}{2\mu}}}{\mu(\alpha_k+\lambda)}:=J_1(t)+J_2(t).$$

For the case $\zeta=0$, thanks to \eqref{zeta0est1}, this is bounded by 
$$\sum_{k=1}^\infty \frac{1}{\mu(\alpha_k+\lambda)}:=J_3.$$

For any $\eta\in(0,1)$, by Young's inequality we may find $c_\eta>0$ such that \begin{equation}\label{young's}
\alpha_k+\lambda\geq c_\eta {\alpha_k}^{1-\eta}\lambda^\eta.\end{equation} Thus for any $\eta\in(0,\eta_0)$, by \eqref{summoperator} we may find $c_\eta>0$ such that 

\begin{equation}\label{youngused}J_3\leq \frac{c_\eta}{\mu} \lambda^{-\eta},\quad J_2(t)\leq \frac{c_\eta}{\mu}\lambda^{-\eta}e^{-\frac{t}{2\mu}}.\end{equation}

In the case $\zeta=0$, the proof is already complete by plugging the bound of $J_3$ into \eqref{Lambdamulambda}.

In the following we focus on the case $\zeta=1$. We first compute the stochastic integral relating to $J_1(t)$. The analysis of $J_1(t)$ is similar to that of the heat semigroup in \cite{han2022exponential}. For any $r<1$ we can find positive constants $c_r$, $c_{r,\eta}$ such that, by using \eqref{young's} in the second inequality,
\begin{equation}\label{J1J1}e^{-2(\alpha_k+\lambda)t}\leq c_r\frac{1}{((\alpha_k+\lambda)t)^r}\leq c_{r,\eta}\frac{1}{\alpha_k^{(1-\eta)r}}\lambda^{-\eta r}\frac{1}{t^r}. \end{equation}
Since \begin{equation}\label{579}\int_0^t s^{-2\alpha-r} ds<\infty\end{equation} by choosing $\alpha>0$ sufficiently small followed by choosing $r<1$ sufficiently close to 1, one can ensure that $\sum_{k=1}^\infty\frac{1}{\alpha_k^{(1-\eta)r}}<\infty$ for $r$ sufficiently close to 1, and then $r\eta$, the exponent of $\lambda^{-1},$ can achieve any value in $(0,\eta_0)$ by our choice. Thus we write $\eta$ in place of $r\eta$ for the given $\eta\in(0,1)$, and derive the estimate \begin{equation}\label{eqref580}
    J_1(t)\leq c_{\eta} \lambda^{-\eta}\frac{1}{t^r}
\end{equation} where $c_\eta$ implicitly depends on $r$ but is independent of $t$.
This produces the desired $\lambda^{\eta}$ factor in the upper bound of $\Lambda_\alpha^{\mu,\lambda}$.
Finally we deal with the component involving $J_2(t)$. It suffices to plug in the bound \eqref{youngused} into \eqref{Lambdamulambda}. This finishes the proof with a constant $c_{\eta,p,\mu}$ that is $\mu$-dependent.

We note the constant dependence on the terminal time $T$: $C_{\eta,p,\mu}$ is bounded by (modulo the power in $p$) $\int_0^t (t-s)^{-2\alpha}(J_1(t-s)+J_2(t-s))ds$. This integral is finite for every $t>0$ thanks to \eqref{youngused}, \eqref{579} and \eqref{eqref580}, and is uniformly bounded for any $t\in[0,T]$.

\end{proof}

\begin{proposition}\label{proposition1.10!}
Fix $T>0$.   For each $p\geq 2$ and each $\eta\in(0,\eta_0),$ we may choose $\alpha>0$ sufficiently small such that,
   for $\zeta=1,$ 
\begin{enumerate}
    \item for $\mu\in(0,1)$ and any $t\in[0,T],$
    \begin{equation}\label{1.101.35}
\mathbb{E}|P_{N_\mu^\lambda}\Pi_2\Gamma_\alpha^{\mu,\lambda}(t)|_H^p\leq \frac{c_{\eta,p,\mu}}{\mu^p}\lambda^{-\frac{\eta p}{2}}\mathbb{E}\|\Phi\|_t^p,
    \end{equation}
    \item for $\mu\in(0,1)$ and any $t\in[0,T],$
    \begin{equation}
        \mathbb{E}|(I-P_{N_\mu^\lambda})\Pi_2\Gamma_\alpha^{\mu,\lambda}(t)|_{H^{-1}(\lambda)}^p\leq \frac{c_{\eta,p,\mu}}{\mu^p}\lambda^{-\frac{\eta p}{2}}\mathbb{E}\|\Phi\|_t^p
    \end{equation}
\end{enumerate}
and for $\zeta=0$, for any $\mu\in(0,1)$ and any $t\in[0,T],$
   \begin{equation}
        \mathbb{E}|(I-P_{N_\mu^\lambda})\Pi_2\Gamma_\alpha^{\mu,\lambda}(t)|_{H^{-1}(\lambda)}^p\leq \frac{c_{\eta,p,\mu}}{\mu^p}\lambda^{-\frac{\eta p}{2}}\mathbb{E}\|\Phi\|_t^p.
    \end{equation}
    Here the constant $c_{\eta,p,\mu}<\infty$ depends on $\eta$, $p$, $\mu>0$ and $T$, but does not depend on $\lambda$. The estimates hold uniformly for any $t\in[0,T]$ with the same $T$-dependent numerical constant $C_{\eta,p,\mu}<\infty$.

Moreover, all these estimates are true if we evaluate the left hand side of each estimate at a stopping time $t\wedge \tau$, where $\tau$ is a stopping time adapted to the filtration $(\mathcal{F}_t)_{t\geq 0}$, and we write on the right hand side of each estimate $\sup_{s\in[0,t\wedge\tau]}\|\Phi(s)\|_H^p.$
    
\end{proposition}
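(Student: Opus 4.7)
The plan is to mirror the proof of Proposition \ref{proposition 1.8}, replacing the first-coordinate estimates on $f_\mu^\lambda(k)(t,0,1/\mu)$ with the derivative estimates \eqref{zeta2est2}, \eqref{zeta4est4} and \eqref{zeta0est2}, and then matching each of the three cases in the statement with the appropriate Fourier weights. Applying the Burkholder--Davis--Gundy inequality to $\Pi_2\Gamma_\alpha^{\mu,\lambda}(t)$ and using
\[
\langle \mathcal{I}_\mu^{*}\mathcal{S}_\mu^{\lambda *}(t)\Pi_2^* e_k,\,e_j\rangle_H \;=\; \langle \Pi_2\mathcal{S}_\mu^\lambda(t)\mathcal{I}_\mu e_j,\,e_k\rangle_H \;=\; f_\mu^\lambda(k)'(t,0,1/\mu)\,\mathbf{1}_{j=k},
\]
the same calculation that produced \eqref{Lambdamulambda} (self-adjointness of $\Phi$ together with $\sup_k|e_k|_{H_0}\leq M$) bounds the quadratic variation in any Hilbert norm with Fourier weights $(c_k^\star)_{k\geq 1}$ by a constant multiple of
\[
\int_0^t (t-s)^{-2\alpha}\,\|\Phi(s)\|_t^2\,\Big(\sum_{k\geq 1} c_k^\star\,|f_\mu^\lambda(k)'(t-s,0,1/\mu)|^2\Big)\,ds.
\]
Raising to the power $p/2$ and using H\"older in time to extract $\mathbb{E}\|\Phi\|_t^p$ reduces everything to a deterministic sum-integral estimate, with the three cases differing only in the choice of $c_k^\star$.

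For case (1), the norm $|\cdot|_H$ restricted to modes $k\leq N_\mu^\lambda$ corresponds to $c_k^\star=\mathbf{1}_{k\leq N_\mu^\lambda}$. Estimate \eqref{zeta2est2} with $v=1/\mu$ gives $|f_\mu^\lambda(k)'(t-s,0,1/\mu)|\leq \tfrac{2}{\mu}e^{-(\alpha_k+\lambda)(t-s)}$ on this range, so the mode sum is at most $\tfrac{4}{\mu^2}\sum_{k\leq N_\mu^\lambda}e^{-2(\alpha_k+\lambda)(t-s)}$. Using the interpolation $e^{-2(\alpha_k+\lambda)(t-s)}\leq c_r[(\alpha_k+\lambda)(t-s)]^{-r}$ from \eqref{J1J1} followed by Young's inequality \eqref{young's} yields
\[
\sum_{k\leq N_\mu^\lambda}|f_\mu^\lambda(k)'(t-s)|^2 \;\leq\; \frac{c_{\eta,r}}{\mu^2}\,\lambda^{-\eta r}\,(t-s)^{-r}\sum_{k\geq 1}\alpha_k^{-(1-\eta)r}.
\]
Choosing $\alpha>0$ small and $r\in(0,1)$ close enough to $1$ so that $\int_0^t(t-s)^{-2\alpha-r}ds<\infty$, $\sum_k\alpha_k^{-(1-\eta)r}<\infty$, and $\eta r$ attains any prescribed value in $(0,\eta_0)$, this delivers \eqref{1.101.35}.

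For case (2) the weights are $c_k^\star=(\alpha_k+\lambda)^{-1}\mathbf{1}_{k>N_\mu^\lambda}$; invoking \eqref{zeta4est4} bounds the mode sum by $\tfrac{4}{\mu^2}e^{-(t-s)/(2\mu)}\sum_{k>N_\mu^\lambda}(\alpha_k+\lambda)^{-1}$, and Young's inequality applied to \eqref{summoperator} produces $\sum_k(\alpha_k+\lambda)^{-1}\leq c_\eta\lambda^{-\eta}$, giving the $\lambda^{-\eta}$ decay, while the $\mu$-dependent exponential integrates to a finite constant absorbed into $c_{\eta,p,\mu}$. Case (3) ($\zeta=0$) is structurally identical but easier: \eqref{zeta0est2} gives $|f_\mu^\lambda(k)'|\leq 1/\mu$ with no exponential factor, and the same $\lambda^{-\eta}$ summability closes the estimate. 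For the stopping-time extension, $\Gamma_\alpha^{\mu,\lambda}$ is an $L^p$-continuous stochastic integral, so Burkholder--Davis--Gundy at $t\wedge\tau$ produces the quadratic variation truncated to $[0,t\wedge\tau]$; the deterministic sum-integral bounds are monotone in the upper time limit and automatically restrict to this interval, yielding $\sup_{s\in[0,t\wedge\tau]}\|\Phi(s)\|^p$ on the right.

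The main obstacle, just as in the proof of Proposition \ref{proposition 1.8}, is the triple balancing act hidden inside case (1): the exponent $\eta r$ must realize any value in $(0,\eta_0)$ while simultaneously preserving time-integrability of $(t-s)^{-2\alpha-r}$ and Fourier summability of $\alpha_k^{-(1-\eta)r}$. Only by taking $\alpha$ small enough (after $r$ is fixed close to $1$) can one cover the full range $\eta\in(0,\eta_0)$, which is precisely what creates the implicit $\alpha$-dependence inside the constant $c_{\eta,p,\mu}$ and forces the uniformity in $\lambda$ claimed in the statement.
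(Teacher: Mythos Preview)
Your proposal is correct and follows essentially the same route as the paper: apply BDG, identify the adjoint via $\langle \mathcal{I}_\mu^{*}\mathcal{S}_\mu^{\lambda *}(t)\Pi_2^* e_k,e_j\rangle_H = f_\mu^\lambda(k)'(t,0,1/\mu)\mathbf{1}_{j=k}$, use self-adjointness of $\Phi$ to collapse the double sum, and then feed in the derivative bounds \eqref{zeta2est2}, \eqref{zeta4est4}, \eqref{zeta0est2} with the $J_1$-type interpolation \eqref{J1J1} for the low modes and the $\lambda^{-\eta}$ summability \eqref{youngused} for the high modes. The paper actually discards the factor $e^{-(t-s)/(2\mu)}$ in case~(2) rather than integrating it, but either way the resulting constant is $\mu$-dependent and is absorbed into $c_{\eta,p,\mu}$.

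One small point deserves care: your stopping-time argument appeals to ``BDG at $t\wedge\tau$'', but $\Gamma_\alpha^{\mu,\lambda}(t)$ is \emph{not} a martingale in $t$ (the integrand $(t-s)^{-\alpha}\mathcal{S}_\mu^\lambda(t-s)$ depends on the terminal time), so optional stopping does not apply directly. The paper's device is to replace $\Phi$ by $\Phi\,\mathbf{1}_{[0,\tau]}$, which is still adapted, and then reuse the fixed-time estimate; the right-hand side then automatically becomes $\sup_{s\in[0,t\wedge\tau]}\|\Phi(s)\|^p$. This is what is actually needed downstream in Theorem~\ref{theorem2.11}, since for $t\leq\tau$ the convolution $\Gamma^{\mu,\lambda}(t)$ only sees $\Phi$ on $[0,\tau]$ anyway.
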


\begin{proof} We first deal with the case $\zeta=1$, where we are required to separately deal with the higher and lower modes. For the lower modes, let $\Lambda_1(t)$ denote the quadratic variation of $P_{N_\mu^\lambda}\Pi_2\Gamma_\alpha^{\mu,\lambda}.$ Then 
$$\begin{aligned}\Lambda_1(t)=&\sum_{j=1}^\infty\int_0^t (t-s)^{-2\alpha}|P_{N_\mu^\lambda}\Pi_2\mathcal{S}_\mu^\lambda(t-s)\mathcal{I}_\mu\Phi(s)e_j\|_H^2ds\\
=&\sum_{k=1}^{N_\mu^\lambda }\sum_{j=1}^\infty\int_0^t(t-s)^{-2\alpha}\langle\Phi(s)e_j,\mathcal{I}_\mu^*\mathcal{S}_\mu^{\lambda *}(t-s)\Pi_2^* e_k\rangle_H^2 ds.\end{aligned}$$

The eigenvalues $e_j$ satisfy $\mathcal{I}_\mu^{*}\mathcal{S}_\mu^{\lambda*}\Pi_2^*e_k=f_\mu^\lambda(k)'(t-s)e_k$, where $f_\mu^\lambda(k)$ solves \eqref{effkmu} with $u_k=0$ and $v_k=1/\mu$. By \eqref{zeta2est2} with $v=\frac{1}{\mu},$ for $1\leq k\leq N_\mu^\lambda,$ we have
$$(f_\mu^\lambda(k)'(t)\leq\frac{2e^{-(\alpha_k+\lambda)t}}{\mu},$$ so that, with the same reasoning as the previous proof,
$$\Lambda_1(t)\leq\frac{C}{\mu^2}\int_0^t (t-s)^{-2\alpha}\|\Phi\|_s^2(\sum_{k=1}^{N_\mu^\lambda} e^{-2(\alpha_k+\lambda)(t-s)})ds.$$
    By the same reasoning as in the proof of Proposition \ref{proposition 1.8}, for any $\eta\in(0,\eta_0)$ we may find $C_\eta>0$ such that 
    $$\Lambda_1(t)\leq\frac{C_\eta}{\mu^2}\|\Phi\|_t^2 \lambda^{-\eta}.$$
    By the BDG inequality 
$$ \mathbb{E}|P_{N_\mu^\lambda}\Pi_2\Gamma_\alpha^{\mu,\lambda}(t)|_H^p\leq C\mathbb{E}(\Lambda_1(t))^{p/2}
 $$   which implies \eqref{1.101.35} .

For the higher modes, compute the quadratic variation
$$\begin{aligned}\Lambda_2(t)&=\sum_{j=1}^\infty\int_0^t (t-s)^{-2\alpha}|(I-P_{N_\mu^\lambda})\Pi_2\mathcal{S}_\mu^\lambda(t-s)\mathcal{I}_\mu \Phi(s)e_j\|_{H^{-1}(\lambda)}^2 ds
\\
&=\sum_{j=1}^\infty\int_0^t (t-s)^{-2\alpha}|(A-\lambda)^{-1/2}(I-P_{N_\mu^\lambda})\Pi_2\mathcal{S}_\mu^\lambda(t-s)\mathcal{I}_\mu \Phi(s)e_j|_H^2 ds,\end{aligned}$$
Expanding the squares, the right hand side is 
$$ \Lambda_2(t)\leq \sum_{k=N_\mu^\lambda+1}^\infty\sum_{j=1}^\infty \int_0^t (t-s)^{-2\alpha}\langle \Phi(s)e_j,\mathcal{I}_\mu^*\mathcal{S}_\mu^{\lambda*}(t-s)\Pi_2^*(I-P_{N_\mu^\lambda}^*)(A-\lambda)^{-1/2}e_k\rangle_H^2 ds.
$$

Since 
$$\begin{aligned}&\langle \mathcal{I}_\mu^*\mathcal{S}_\mu^{\lambda*}(t-s)\Pi_2^*(I-P_{N_\mu^\lambda}^*)(A-\lambda)^{-1/2}e_k,e_j\rangle_H^2\\
&=\begin{cases}
-(\alpha_k+\lambda)^{-1/2}f_\mu^\lambda(k)'(t-s),\quad \text{ if } k=j>N_\mu^\lambda,\\ 0\quad\text{ otherwise },
\end{cases}
\end{aligned}$$
and by \eqref{zeta4est4},
$$(\alpha_k+\lambda)^{-1/2}|f_\mu^\lambda(k)'(t-s)|\leq C(\alpha_k+\lambda)^{-1/2}\mu^{-1}.$$

By \eqref{youngused} and the assumptions on $\alpha_k$, for any $\eta\in(0,\eta_0)$ we may find $c_\eta>0$ such that
$$\sum_{k=N_\mu^\lambda+1}^\infty (\alpha_k+\lambda)^{-1}|f_\mu^\lambda(k)'(t-s)|^2\leq \frac{1}{\mu^2}\sum_{k=1}^\infty \frac{1}{(\alpha_k+\lambda)}\leq \frac{c_\eta}{\mu^2} \lambda^{-\eta},$$

where $C_\eta$ is independent of $T$. Since $\int_0^t (t-s)^{-2\alpha}ds$ is finite and uniformly bounded for any $t\in[0,T]$, we deduce that, for some $c_\eta>0$ depending only on $\eta$ and $T$,
$$\Lambda_2(t)\leq \frac{c_\eta}{\mu^2}\lambda^{-\eta}\|\Phi\|_t^2.$$
By the BDG inequality,
$$\mathbb{E}|(I-P_{N_\mu^\lambda})\Pi_2\Gamma_\alpha^{\mu,\lambda}(t)|_{H^{-1}(\lambda)}^p\leq \frac{c_\eta}{\mu^p}\lambda^{-\eta p/2}\mathbb{E}\|\Phi\|_t^p.$$
This completes the proof for the $\zeta=1$ case. The case $\zeta=0$ is analogous, and essentially repeats the computations in the high-modes part of $\zeta=1$, using \eqref{zeta0est2} in place of \eqref{zeta4est4}

The proof regarding evaluation at a stopping time $t\wedge\tau$ is easy: one may just set $\Phi=0$ on $[t\wedge \tau,t]$ and use the estimates we already obtained.

\end{proof}

\begin{theorem}\label{theorem2.11} Fix $T>0$.
Recall $\Gamma^{\mu,\lambda}$ defined in \eqref{Gammamulambda}. Then for any $p\geq 2$ and any $\eta\in(0,\eta_0)$, we may find $c_{\eta,p,\mu}>0$ such that for all $\lambda\geq 0,$ for both $\zeta=0$ and $\zeta=1,$
\begin{equation}\label{1.38lasttheorem}
\mathbb{E}\sup_{t\in[0,T]}|\Pi_1\Gamma^{\mu,\lambda}(t)|_H^p\leq c_{\eta,p,\mu}\lambda^{-\frac{\eta p}{2}} \mathbb{E}\|\Phi\|_T^p .
\end{equation}
The constant $c_{\eta,p,\mu}$ depends on $\eta$, $p$, $\mu$ and $T$ but is independent of $\lambda>0$.

Moreover, for any stopping time $\tau$ adapted to the filtration $(\mathcal{F}_t)_{t\geq 0}$, we have 
\begin{equation}\label{tau1.38lasttheorem}
\mathbb{E}\sup_{t\in[0,T\wedge\tau]}|\Pi_1\Gamma^{\mu,\lambda}(t)|_H^p\leq c_{\eta,p,\mu}\lambda^{-\frac{\eta p}{2}} \mathbb{E}\|\Phi\|_{T\wedge\tau}^p .
\end{equation}
\end{theorem}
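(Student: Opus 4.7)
The approach is to apply the stochastic factorization formula \eqref{uppergamma}, which rewrites $\Gamma^{\mu,\lambda}(t)$ as a deterministic Volterra-type convolution of the singular kernel $(t-s)^{\alpha-1}$ against the lower-order process $\mathcal{S}_\mu^\lambda(t-s)\Gamma_\alpha^{\mu,\lambda}(s)$. Since Propositions \ref{proposition 1.8} and \ref{proposition1.10!} already supply pointwise-in-time $L^p$ estimates for $\Gamma_\alpha^{\mu,\lambda}$ carrying the desired $\lambda^{-\eta p/2}$ decay, the task reduces to upgrading them to a uniform-in-$t$ bound for $\Gamma^{\mu,\lambda}$ using the regularising effect of the kernel, combined with the component-wise semigroup estimates from Section \ref{sect.2.1}.

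To this end, I would decompose $\Gamma_\alpha^{\mu,\lambda}(s) = (\Pi_1\Gamma_\alpha^{\mu,\lambda}(s), \Pi_2\Gamma_\alpha^{\mu,\lambda}(s)) \in H \times H^{-1}$, and in the case $\zeta = 1$ further split the second component according to the low/high mode projections $P_{N_\mu^\lambda}$ and $I - P_{N_\mu^\lambda}$, matching the distinct behaviours of the wave semigroup identified in Lemma \ref{lemma1.5!}. Applying Lemma \ref{lemma 1.4} to the first component, Lemma \ref{lemma1.5!} to the low-mode part of the second, and Lemma \ref{lemma1.7!} to the high-mode part yields the deterministic pointwise bound
\begin{equation*}
|\Pi_1\mathcal{S}_\mu^\lambda(r)\Gamma_\alpha^{\mu,\lambda}(s)|_H \leq |\Pi_1\Gamma_\alpha^{\mu,\lambda}(s)|_H + 4\mu\,|P_{N_\mu^\lambda}\Pi_2\Gamma_\alpha^{\mu,\lambda}(s)|_H + \sqrt{4\mu}\,|(I - P_{N_\mu^\lambda})\Pi_2\Gamma_\alpha^{\mu,\lambda}(s)|_{H^{-1}(\lambda)},
\end{equation*}
uniformly in $r \geq 0$; the $\zeta = 0$ case is simpler and needs no mode splitting, using only the $\zeta = 0$ bound of Lemma \ref{lemma1.7!}. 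Substituting this into \eqref{uppergamma} and applying Hölder's inequality in $s$ with conjugate exponents $p/(p-1)$ and $p$, which is valid as soon as $\alpha > 1/p$ so that $(t-s)^{\alpha-1} \in L^{p/(p-1)}(0,T)$, produces
\begin{equation*}
\sup_{t\in[0,T]}|\Pi_1\Gamma^{\mu,\lambda}(t)|_H^p \leq C_{p,\alpha,T}\int_0^T F(s)^p\,ds,
\end{equation*}
where $F(s)$ denotes the right-hand side of the pointwise bound above.

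Taking expectations, interchanging with the $s$-integral by Fubini, and invoking Propositions \ref{proposition 1.8} and \ref{proposition1.10!} term by term then leads to the right-hand side $c_{\eta,p,\mu}\lambda^{-\eta p/2}\mathbb{E}\|\Phi\|_T^p$, establishing \eqref{1.38lasttheorem}. The prefactors $4\mu$ and $\sqrt{4\mu}$ raised to the $p$-th power cancel exactly the $\mu^{-p}$ appearing in Proposition \ref{proposition1.10!}, so the end result carries no negative power of $\mu$ beyond what is absorbed into $c_{\eta,p,\mu}$. For the stopping-time variant \eqref{tau1.38lasttheorem}, one applies \eqref{1.38lasttheorem} to the truncated integrand $\widetilde{\Phi}(s) := \Phi(s)\mathbf{1}_{s \leq \tau}$, which remains adapted and self-adjoint; since the resulting stochastic integrals coincide with the original ones on $[0, T\wedge\tau]$ and $\|\widetilde{\Phi}\|_T = \|\Phi\|_{T\wedge\tau}$, the desired bound follows. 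The main technical point will be the simultaneous constraint $\alpha > 1/p$ (needed for Hölder's inequality) and the smallness of $\alpha$ required in Proposition \ref{proposition 1.8}, which is compatible provided $p$ is chosen sufficiently large in terms of $\eta_0$.
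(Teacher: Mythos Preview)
Your approach is essentially identical to the paper's: the same stochastic factorization formula \eqref{uppergamma}, the same three-way decomposition of $\Gamma_\alpha^{\mu,\lambda}$ into the $\Pi_1$ component and the low/high mode parts of $\Pi_2$, the same combination of Lemmas \ref{lemma 1.4}, \ref{lemma1.5!}, \ref{lemma1.7!} with Propositions \ref{proposition 1.8} and \ref{proposition1.10!}, and the same truncation $\Phi\mapsto\Phi\mathbf{1}_{[0,\tau]}$ for the stopping-time variant.

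The one substantive difference is your use of H\"older's inequality where the paper uses Jensen's inequality, and this creates a genuine gap. The theorem asserts the bound for \emph{every} $p\ge 2$ and every $\eta\in(0,\eta_0)$; your H\"older step forces $\alpha>1/p$, while Propositions \ref{proposition 1.8} and \ref{proposition1.10!} require $\alpha$ to be small---indeed, to obtain $\eta$ close to $\eta_0$ in the proof of Proposition \ref{proposition 1.8} one must take the auxiliary exponent $r$ close to $1$ and hence $\alpha$ arbitrarily close to $0$. These are incompatible for small $p$, so you only obtain the result for $p$ sufficiently large. You cannot simply interpolate down, because the right-hand side is $\mathbb{E}\|\Phi\|_T^p$ and $\Phi$ is only assumed to lie in $L^p$. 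The paper explicitly flags this issue in a footnote and instead applies Jensen's inequality to the probability measure proportional to $(t-s)^{\alpha-1}\,ds$ on $[0,t]$, which yields
\[
\left|\int_0^t (t-s)^{\alpha-1}Y(s)\,ds\right|_H^p \le \Bigl(\int_0^t (t-s)^{\alpha-1}\,ds\Bigr)^{p-1}\int_0^t (t-s)^{\alpha-1}|Y(s)|_H^p\,ds
\]
and is valid for every $p\ge 1$ and every $\alpha\in(0,1)$ simultaneously. Replacing your H\"older step with this removes the constraint and closes the argument.
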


\begin{remark}
    A crucial observation shall be made here: the estimate \eqref{1.38lasttheorem} involves the norm of the first component $H\times\{0\}\subset \mathcal{H}$, so that changing the norm of $H^{-1}$ into $H^{-1}(\lambda)$ has no quantitative impact on this estimate. When we solve the stochastic wave equation, as the drift and diffusion coefficients depend only on $u$ but not on $\partial_t u$, we only need to establish uniqueness in law of $u$, then uniqueness in law of $\partial_t u$ follows from integration. Therefore we only need to derive estimates in the first component $H$ of $\mathcal{H}$. The reason why we insist on changing the norm of $H^{-1}$ to $H^{-1}(\lambda)$ is that, when using the factorization formula of stochastic convolutions, the latter choice can generate the desired $\lambda^{-\eta/2}$ coefficient, but the original choice does not give us this coefficient. This will be clear from the proof below. 
\end{remark}

\begin{proof} In the case $\zeta=1,$
    by the stochastic factorization formula (\cite{da2014stochastic},Section 5.3.1), 
\begin{equation}
    \label{factorization}
\begin{aligned}
    \Gamma^{\mu,\lambda}(t)&=\frac{\sin(\alpha\pi)}{\pi}\int_0^t (t-s)^{\alpha-1}\mathcal{S}_\mu^\lambda(t-s)  [\begin{pmatrix} I\\0\end{pmatrix} \Pi_1\Gamma_\alpha^{\mu,\lambda}(s)\\&
    +\begin{pmatrix} 0\\ P_{N_\mu^\lambda}\end{pmatrix} P_{N_\mu^\lambda}\Pi_2\Gamma_\alpha^{\mu,\lambda}(s)+\begin{pmatrix} 0\\ (1-P_{N_\mu^\lambda})\end{pmatrix} (1-P_{N_\mu^\lambda})\Pi_2\Gamma_\alpha^{\mu,\lambda}(s)]ds.\end{aligned} \end{equation}

Now we bound the $L^p$-th norm of $\Gamma^{\mu,\lambda}(t)$ term by term. We start with the first term, choose $\alpha>0$ such that $p>\frac{1}{\alpha}$ and apply Hölder's inequality:
\begin{equation}\label{firstexpanse}\begin{aligned}
&\mathbb{E}\sup_{t\in[0,T]}\left|\int_0^t (t-s)^{\alpha-1}\Pi_1\mathcal{S}_\mu^\lambda(t-s)\begin{pmatrix} I\\0\end{pmatrix} \Pi_1\Gamma_\alpha^{\mu,\lambda}(s)ds\right|_H^p\\
&\leq c_{\eta,p} 
\left(\int_0^T s^{(\alpha-1)p/(p-1)}\|\Pi_1\mathcal{S}_\mu^\lambda(s)\begin{pmatrix} I\\0\end{pmatrix}\|_{\mathcal{L}(H)}^{p/(p-1)}ds
\right)^{p-1} \mathbb{E}\int_0^T |\Pi_1\Gamma_\alpha^{\mu,\lambda}(s)|_H^p ds\\&
\leq c_{\eta,p} \lambda^{-\frac{\eta p}{2}}\mathbb{E}\|\Phi\|_T^p ,
\end{aligned}\end{equation}
where in the second inequality we use Lemma \ref{lemma 1.4} and Proposition \ref{proposition 1.8}, noting that the estimate in Proposition \ref{proposition 1.8} is valid for all $t\in[0,T]$ with the same numerical constant. Thus the constant $c_{\eta,p}$ here depends on $T$ but is finite for any $T>0$.

For the second term
$$\mathbb{E}\sup_{t\in[0,T]}\left|\int_0^t (t-s)^{\alpha-1}\Pi_1\mathcal{S}_\mu^\lambda(t-s)\begin{pmatrix} 0\\P_{N_\mu^\lambda}\end{pmatrix} P_{N_\mu^\lambda}\Pi_2\Gamma_\alpha^{\mu,\lambda}(s)ds\right|_H^p
,$$
we use Lemma \ref{lemma1.5!} and use Proposition \ref{proposition1.10!} (1).

For the third term 
$$\mathbb{E}\sup_{t\in[0,T]}\left|\int_0^t (t-s)^{\alpha-1}\Pi_1\mathcal{S}_\mu^\lambda(t-s)\begin{pmatrix} 0\\I-P_{N_\mu^\lambda}\end{pmatrix} (I-P_{N_\mu^\lambda})\Pi_2\Gamma_\alpha^{\mu,\lambda}(s)ds\right|_H^p,$$
we use Lemma \ref{lemma1.7!} and use Proposition \ref{proposition1.10!} (2). Notice that we are working with the Hilbert space $H^{-1}(\lambda)$ rather than $H^{-1}$. The proof in the case $\zeta=0$ is very similar and indeed easier, so we omit the details. For the second and third term, the leading constant in the estimate all depends on $T$ but is finite for any $T>0$.

The proof for the stopping time version $\mathbb{E}\sup_{t\in[0, T\wedge\tau]}$ is exactly the same, where we just integrate over $[0,T\wedge \tau]$, and use the stopping time version of Proposition \ref{proposition1.10!}.
\end{proof}

\section{Existence and uniqueness of solutions}\label{Section3}

The proof of well-posedness in this section is a bit technical due to difficulties of approximating a continuous map by Lipschitz maps in an infinite dimension setting. The reader may first read Section \ref{alternative} where we work under a slightly stronger assumption, and consequently we have a much shorter proof with the main ideas well illustrated. The general case follows similar lines, but a lot more care has to be taken.

While we prove existence and uniqueness to the stochastic wave equation \eqref{stract}, the value of $\mu>0$ is irrelevant. Thus we suppress $\mu$ from the notation and simply write the solution as $Z(t)=(u(t),v(t))$.

\subsection{Proof of weak existence: step 1}\label{3.111}
For weak existence of solutions to hyperbolic SPDEs on Hilbert space, we cannot use the standard approach in \cite{gkatarek1994weak}, \cite{da2014stochastic}, \cite{gyongy1998existence} that are designed for the parabolic case, because the semigroup $\mathcal{S}_\mu(t)$ of the stochastic wave equation is not a compact operator. Thus we choose to prove weak existence directly. 

First note that since $G$ is uniformly non-degenerate and $B$ has linear growth, we may use Girsanov transform to construct the drift $B$, so we may first without loss of generality assume $B=0$. See \cite{masiero2017well}, Remark 2.1 and references therein for a detailed discussion on applying Girsanov transform for SPDEs in infinite dimensions.

We begin with a standard approximation procedure\footnote{This approximation is used as an illustration but will not be used in the formal proof to be given.}. Consider $P_n$, the projection of $H$ onto the eigenspace spanned by the first $n$ eigenvectors $\{e_1,\cdots,e_n\}$, and consider truncated coefficients $$G^n(t,x):=P_nG(t,P_nx).$$
Now we consider the following SPDEs 

\begin{equation}\label{}
\mu \frac{\partial^2 u^n(t)}{\partial t^2}=Au^n(t) -\zeta \frac{\partial u^n(t)}{\partial t} +G^n(t,u^n(t)) \frac{dW_t}{dt},
\end{equation}
with initial value $(P_nu_0,P_nv_0)\in \mathcal{H}_1$. The initial value will converge to $(u_0,v_0)$ and thus does not matter now. We focus on the stochastic integration, which is given in mild formulation as
$$\int_0^t \Pi_1\mathcal{S}_\mu(t-s)\mathcal{I}_\mu G^n(s,u^n(s))dW(s).$$
First using the mild formulation and Gronwall's inequality one may get an estimate 
$$\mathbb{E}[\sup_{t\in[0,T]}|u^n(t)|^p]\leq M_p$$ for some $n$-independent constant $M_p$. We now wish to establish compactness. We need the following two results from \cite{cerrai2006smoluchowski2}: abbreviating, for any $z\in L^p(\Omega,L^p([0,T];\mathcal{H}_0)),$ $$\Gamma_\mu(z)(t)=\int_0^t\mathcal{S}_\mu(t-s)\mathcal{I}_\mu G(s,\Pi_1z(s))dW(s),$$
\begin{proposition}\label{proposition}
    Fix $T>0$, given any $p>4$ consider $z\in L^p(\Omega,L^p([0,T];\mathcal{H}_0))$ and any $\delta<\eta_0-2/p$, we 
    have, for the case of damped stochastic wave equation $\zeta=1$: 
    \begin{equation}
        \sup_{\mu>0} \mathbb{E}[\Pi_1|\Gamma_\mu (z)(t)|_{\mathcal{C}([0,T];H^\delta)}]\leq C_{p,T} (1+\mathbb{E}\int_0^T |\Pi_1 z(t)|_H^p dt),
    \end{equation}
    where $C_{p,T}$ does not depend on $\mu$. The parameter $\eta_0$ is the one given in Assumption $\mathbf{H}1$.
    
    For the case $\zeta=0$, i.e. with no damping term $\partial_t u$, we have the same estimate but with a constant $C_{p,T,\mu}$ that is $\mu$-dependent.
\end{proposition}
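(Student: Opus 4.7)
The estimate is adapted from \cite{cerrai2006smoluchowski2}; the plan is to reproduce it using the stochastic factorization formula of Da Prato--Kwapień--Zabczyk combined with $H^\delta$-valued refinements of the quadratic-variation estimates from Section \ref{Section2}. As in the proof of Theorem \ref{theorem2.11} one writes
\[
  \Pi_1 \Gamma_\mu(z)(t) = \frac{\sin(\alpha\pi)}{\pi} \int_0^t (t-s)^{\alpha-1} \Pi_1 \mathcal{S}_\mu(t-s)\, \Gamma_\alpha^{\mu,0}(s)\, ds,
\]
and splits the integrand into the three pieces of \eqref{factorization}.

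First, one establishes an $H^\delta$-valued analogue of Propositions \ref{proposition 1.8} and \ref{proposition1.10!}. Repeating their BDG-plus-Parseval computation, the quadratic variation of $\Pi_1\Gamma_\alpha^{\mu,0}$ taken in the $H^\delta$ norm picks up an extra weight $\alpha_k^\delta$ in the $k$-sum, so the relevant summability requirement becomes $\sum_k \alpha_k^{\delta-(1-\eta)r}<\infty$. Combined with the integrability constraint $r<1-2\alpha$ from $\int_0^t s^{-2\alpha-r}ds<\infty$ and assumption $\mathbf{H}1$, this forces $\delta+2\alpha<\eta_0$. The analogues for the two second-component pieces follow from the same low/high-mode splitting used in Proposition \ref{proposition1.10!}, taken in the shifted norm $H^{-1}(\lambda)$ as dictated by Lemma \ref{lemma1.7!}. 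Then applying the factorization formula, using the uniform semigroup bounds of Lemmas \ref{lemma 1.4}, \ref{lemma1.5!} and \ref{lemma1.7!} to control $\Pi_1 \mathcal{S}_\mu(t-s)$, together with H\"older's inequality in $s$ with exponent $\alpha>1/p$, one bounds $\sup_t |\Pi_1\Gamma_\mu(z)(t)|_{H^\delta}$ in terms of the $L^p([0,T])$ norm of $s\mapsto|\Gamma_\alpha^{\mu,0}(s)|_{\mathcal{H}^\delta}$. Raising to the $p$-th power, using Fubini to swap the time integration with the expectation, and inserting the linear-growth bound $\|\Phi(s)\|_{\mathcal{L}(H_0,H)}\leq M(1+|\Pi_1 z(s)|_H)$ from $\mathbf{H}5$, the right-hand side becomes $C_{p,T}(1+\mathbb{E}\int_0^T |\Pi_1 z(s)|_H^p\, ds)$. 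The compatibility $\delta+2\alpha<\eta_0$ with $\alpha$ just above $1/p$ produces the stated range $\delta<\eta_0-2/p$. For $\zeta=0$ the argument is identical, except that Lemmas \ref{lemma2.3} and \ref{lemma1.5!} each carry a $\mu^{-1/2}$ factor which propagates to the final $\mu$-dependent constant.

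The main technical obstacle is the $\mu$-uniformity of $C_{p,T}$ in the damped case. The pointwise-in-$t$ estimates developed above carry a $\mu$-dependent factor coming from the $\tfrac{1}{\mu}e^{-t/(2\mu)}$ contribution of high modes, which after factorization becomes $\mu^{-2\alpha}$; a na\"ive application of the factorization approach therefore does not produce a uniform constant. The resolution, implicit in the approach of Cerrai--Freidlin, is to work with the time-integrated quantity $\int_0^T \tfrac{1}{\mu}e^{-t/(2\mu)}dt \leq 2$, which is uniformly bounded in $\mu$; combined with the additional smallness coming from $\sum_k \alpha_k^{\delta-1}$-type summability for $\delta<\eta_0$ in the $H^\delta$ weighting, this allows the high-mode contribution to be absorbed into a $\mu$-uniform constant. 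Tracking this uniformity through the low/high-mode splits and shifted-norm book-keeping set up in Lemmas \ref{lemma1.5!} and \ref{lemma1.7!} is the main subtlety of the proof.
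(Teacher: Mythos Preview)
Your overall strategy matches the paper's: it says the result follows from Cerrai--Freidlin and can be reproved by ``modifying the proof of Theorem \ref{theorem2.11ext}'', replacing $H$-norms by $H^\delta$-norms and using $\sum_k \alpha_k^{-\omega}$ summability --- exactly the factorization-plus-weighted-quadratic-variation plan you outline. The derivation of the range $\delta+2\alpha<\eta_0$ (hence $\delta<\eta_0-2/p$) from the extra $\alpha_k^\delta$ weight is correct.

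However, your proposed resolution of the $\mu$-uniformity in the damped case does not work as stated. You write that one should use $\int_0^T \tfrac{1}{\mu}e^{-t/(2\mu)}dt\le 2$. But inside the factorization the time integral carries the singular weight $(t-s)^{-2\alpha}$, and
\[
  \int_0^t s^{-2\alpha}\,\tfrac{1}{\mu}e^{-s/(2\mu)}\,ds \;\sim\; \mu^{-2\alpha},
\]
not a bounded constant; so this step alone does not kill the $\mu^{-2\alpha}$ you correctly identified. The summability of $\sum_k \alpha_k^{\delta-1}$ by itself is also not enough --- that gives a finite constant but no extra power of $\mu$.

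The actual mechanism (implicit in the blueprint Theorem \ref{theorem2.11ext} via Proposition \ref{proposition 4.8extra}) is to extract additional smallness from the \emph{tail} of the $k$-sum: for $k>N_\mu$ one has $\alpha_k\ge 1/(4\mu)$, so for any $\delta<\eta'<\eta_0$,
\[
  \sum_{k>N_\mu}\alpha_k^{\delta-1}
  = \sum_{k>N_\mu}\alpha_k^{\delta-\eta'}\alpha_k^{\eta'-1}
  \le (4\mu)^{\eta'-\delta}\sum_k \alpha_k^{\eta'-1}
  \le C_{\eta'}\,\mu^{\eta'-\delta}.
\]
This extra $\mu^{\eta'-\delta}$ compensates the $\mu^{-2\alpha}$ from the time integral precisely when $\delta+2\alpha<\eta'<\eta_0$, which is the genuine source of the constraint $\delta<\eta_0-2/p$ and of the $\mu$-uniform constant. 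Once you replace your time-integration argument by this tail-sum estimate, the rest of your proof goes through.
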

The proof of this result in the case $\zeta=1$ is given in \cite{cerrai2006smoluchowski2}, Proposition 3.1. The Laplacian on $[0,1]$ was considered there, but generalization to the operator $A$ satisfying $\mathbf{H}1$ is straightforward. This Proposition can also be proved via modifying the proof of Theorem \ref{theorem2.11ext}. The proof for the $\zeta=0$ version is even easier as the constant can be $\mu$-dependent. We omit the details but remark that the proof can be written via slightly modifying the proof of Theorem \ref{theorem2.11}. We only need to replace all the $H$-norms by $H^\delta$ norms and use the summability of eigenvalues $\sum_k \frac{1}{(\alpha_k)^\omega}$ for appropriate powers of $\omega$.  

The following Proposition is also a useful tool in establishing tightness:
\begin{proposition}\label{propsaulte}
    For any $\epsilon$ sufficiently small, $\rho\in(0,\epsilon)$ and $p\geq 1$, we can find a constant $c=c(\epsilon,\rho,p)$ such that 
$$\sup_{\mu\geq 0} \mathbb{E}|\Pi_1\Gamma_\mu(z)(t)-\Pi_1\Gamma_\mu(z)(s)|^p\leq c|t-s|^{\rho p/2} (1+\mathbb{E}[\Pi_1 z]^p_{\mathcal{C}([0,T];H^{\epsilon})}),
$$ 
    given $z\in L^p(\Omega;\mathcal{C}([0,T];\mathcal{H}_{\epsilon})$ and $t,s\in[0,T]$.
\end{proposition}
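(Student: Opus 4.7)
The plan is to combine the factorization-based estimates of Section \ref{Section2} with the classical ``new noise + shifted past'' decomposition. Using the semigroup identity $\mathcal{S}_\mu(t-r)=\mathcal{S}_\mu(t-s)\mathcal{S}_\mu(s-r)$ for $r\le s\le t$, I would write
\[
\Pi_1\Gamma_\mu(z)(t)-\Pi_1\Gamma_\mu(z)(s) = I_1(s,t)+I_2(s,t),
\]
where $I_1(s,t)=\Pi_1\!\int_s^t\!\mathcal{S}_\mu(t-r)\mathcal{I}_\mu G(r,\Pi_1 z(r))\,dW_r$ records the new noise generated on $[s,t]$ and $I_2(s,t)=\Pi_1(\mathcal{S}_\mu(t-s)-I)\Gamma_\mu(z)(s)$ records the shift applied to the past stochastic convolution.

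For $I_1$, I would re-run the proof of Theorem \ref{theorem2.11} verbatim, but with the time integration restricted to $[s,t]$. Integrating the singular factorization kernel $(t-r)^{\alpha-1}$ over the short interval produces a factor of order $(t-s)^\alpha$, and choosing $\alpha$ slightly above $\rho/2$ delivers the required $|t-s|^{\rho p/2}$. The linear growth of $G$ in Assumption $\mathbf{H}5$ then converts $\|\Phi\|_T$ into $1+\mathbb{E}\|\Pi_1 z\|^p_{\mathcal{C}([0,T];H^\epsilon)}$.

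For $I_2$, I would \emph{not} use the naive $C_0$-semigroup Hölder estimate $|(\mathcal{S}_\mu(\tau)-I)\mathbf{z}|_\mathcal{H}\le C(\mu)\tau^\rho|\mathbf{z}|_{\mathcal{H}_\rho}$, whose constant is $\mu$-dependent and would spoil the uniform bound $\sup_{\mu\ge 0}$ in the statement. Instead, I would insert the factorization identity for $\Gamma_\mu(z)(s)$ inside $I_2$ so that the shift $\mathcal{S}_\mu(t-s)-I$ acts mode by mode on the kernel $(s-r)^{\alpha-1}\mathcal{S}_\mu(s-r)\mathcal{I}_\mu$. For the $k$-th mode the relevant quantity is the increment $f_\mu^0(k)(t-r,0,1/\mu)-f_\mu^0(k)(s-r,0,1/\mu)$, which by Hölder interpolation between the pointwise bounds of Section \ref{Section2} on $f_\mu^0(k)$ and the corresponding bounds on its time derivative (e.g.\ \eqref{zeta1est1}--\eqref{zeta4est4}) can be controlled, uniformly in $\mu$, by $C|t-s|^\rho$ times a modal factor growing like $\alpha_k^{\rho/2}$. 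Plugging this into the BDG/factorization machinery of Theorem \ref{theorem2.11}, and absorbing the extra $\alpha_k^{\rho/2}$ against the eigenvalue summability $\sum_k\alpha_k^{-(1-\eta)}<\infty$ from Assumption $\mathbf{H}1$ with a small excess coming from the input regularity $\Pi_1 z\in H^\epsilon$, closes the estimate with a $\mu$-independent constant.

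The main obstacle is precisely this last step: unlike the analytic (parabolic) case, the wave semigroup does not allow one to exchange time regularity for spatial regularity for free, so every power $|t-s|^\rho$ extracted from $I_2$ must be paid for by a power $\alpha_k^{\rho/2}$ at each eigenmode. Ensuring that this modal growth can be absorbed, \emph{uniformly in $\mu$}, is what forces the restriction $\rho<\epsilon$: the Hölder exponent in time is bounded above by the amount of spatial regularity that the hypothesis $\Pi_1 z\in\mathcal{C}([0,T];H^\epsilon)$ makes available.
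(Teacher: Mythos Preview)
Your treatment of $I_2$ has a genuine gap, and it is precisely the obstruction the paper itself flags. The paper does not prove this proposition in the abstract Hilbert-space framework: it cites \cite{cerrai2006smoluchowski2} for the random-field case on $[0,1]$ and immediately warns that ``Sobolev embedding is used in its proof as an essential procedure'', so that the result is only available for random-field SPDEs. The paper then abandons the Arzel\`a--Ascoli route altogether and proves weak existence in Section~\ref{weakexistence} by a different argument (showing the approximating sequence is Cauchy in Wasserstein distance).

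The concrete failure is the step where you ``absorb the extra $\alpha_k^{\rho/2}$ \dots\ with a small excess coming from the input regularity $\Pi_1 z\in H^\epsilon$''. The stochastic integrand is $\Phi(s)=G(s,\Pi_1 z(s))$, and Assumptions $\mathbf{H}3$--$\mathbf{H}5$ only place $\Phi(s)$ in $\mathcal{L}(H_0,H)$; nothing in the abstract framework transfers the $H^\epsilon$ regularity of $\Pi_1 z(s)$ to $\Phi(s)$. In \cite{cerrai2006smoluchowski2} this transfer is achieved through Sobolev embedding $H^\epsilon([0,1])\hookrightarrow C^\gamma([0,1])$: the composition $x\mapsto g(t,x,u(t,x))$ then inherits H\"older regularity and lands in a positive-order Sobolev space, which is exactly what compensates the modal loss $\alpha_k^{\rho/2}$. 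Without that mechanism your mode-by-mode BDG sum diverges. This is the content of the paper's remark that for random-field solutions ``the diffusion coefficient $g(t,u(t,x))$ has higher spatial regularity than merely being an $L^2([0,1])$ function. Unfortunately, this is not assumed anywhere in our paper.'' Your restriction $\rho<\epsilon$ correctly identifies the bookkeeping, but the currency you plan to spend---extra spatial regularity of $\Phi$---is simply not there under the stated hypotheses.
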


The proof of this result is given in \cite{cerrai2006smoluchowski2} as a lemma after the proof of Proposition 3.1. However, one should be careful that this result is only proved for random field valued SPDEs defined on $[0,1]$, and Sobolev embedding is used in its proof as an essential procedure. Thus we can use this result for random field SPDEs, and the proof of existence will follow. But in our setting of abstract Hilbert space mappings, we cannot use this result so we choose to give an independent proof.

To deduce compactness in $\mathcal{C}([0,T];H)$, the standard procedure is to use the infinite dimensional Arzela-Ascoli theorem. In this direction, one need to show (1) tightness at each time $t$, and (2) uniform continuity of sample paths. Now (1) can be justified with high probability thanks to Proposition \ref{proposition} since $\Pi_1\Gamma_\mu(z)(t)$ is uniformly bounded in $H^\delta$ for each $t$, and $H^\delta$ embeds compactly into $H$ (also note that we assume the initial value $(u_0,v_0)\in\mathcal{H}_1$). If one assumes Proposition \ref{propsaulte}, then  condition (2) can be justified and we are done. However, this is not the case for hyperbolic SPDEs on Hilbert space (though for parabolic SPDEs this is true.) The reason is that if we consider the undamped wave equation, the solution will be sines and cosines. Under the evolution of wave semigroup, we have a perfect control of the norms of these solutions, but our control on the time derivative of the solution is rather poor-- they will have very fast periodic oscillation! For random field stochastic wave equation, we can still get Hölder regularity in time increments thanks to the fact that the diffusion coefficient $g(t,u(t,x))$ has higher spatial regularity than merely being an $L^2([0,1])$ function. Unfortunately, this is not assumed anywhere in our paper. Therefore, the Arzela-Ascoli argument does not apply and we will turn back in Section \ref{weakexistence} to finally establish the proof of weak existence, using estimates derived in Section \ref{weakunique}.

\subsection{Lipschitz approximation of continuous maps}\label{767767}
In the later proofs we will frequently consider Lipschitz approximations of Hölder continuous mappings. Such approximations are well-understood for real-valued functions, but our target space $H$ has infinite dimensions. Fortunately, we can still find an approximation procedure, such that the approximating sequences have good properties inherited from the original function. We begin with the following Proposition, whose proof is taken from Alex Ravsky \cite{610367}.

\begin{proposition}
    Given $V$ a separable Hilbert space, $V'$ be a normed space and $f:V\to V'$ a continuous map. Then we can find a sequence $\{f_n\}$ of bounded Lipschitz maps: $V\to V'$ that converge uniformly to $f$ on any compact subset of $V$. 
\end{proposition}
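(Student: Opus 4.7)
My plan is to approximate $f$ by cylindrical maps that depend only on finitely many coordinates of $V$, after smoothing those maps in finite dimensions. Fix an orthonormal basis $\{e_k\}_{k\ge 1}$ of $V$ and let $P_n$ be the orthogonal projection onto $V_n:=\operatorname{span}\{e_1,\ldots,e_n\}$; each $P_n$ is $1$-Lipschitz, and a standard totally-bounded argument gives $\sup_{x\in K}|P_n x-x|_V\to 0$ for any compact $K\subset V$. I would then construct, for each $n$, a bounded Lipschitz map $\tilde g_n:V_n\to V'$ that approximates $f|_{V_n}$ uniformly on $\overline{B(0,n)}\cap V_n$ to within $1/n$, and set $f_n(x):=\tilde g_n(P_n x)$; boundedness and the Lipschitz property of $f_n$ then follow immediately from those of $\tilde g_n$ together with $\|P_n\|\le 1$.

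For the finite-dimensional step, I would first compose $f|_{V_n}$ with a $1$-Lipschitz radial retraction of $V_n\cong\mathbb R^n$ onto $\overline{B(0,n+1)}\cap V_n$, obtaining a continuous, bounded, uniformly continuous map $h_n:V_n\to V'$ that coincides with $f$ on $\overline{B(0,n)}\cap V_n$. I would then discretise $V_n$ by a cubical grid of small side $\delta_n$, form the associated locally finite Lipschitz partition of unity $\{\phi_C\}_C$ (tent functions supported on slight enlargements of the cubes, normalised to sum to $1$), and define
\[
\tilde g_n(y):=\sum_C \phi_C(y)\,h_n(c_C),\qquad y\in V_n,
\]
where $c_C$ is the centre of $C$. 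Because only finitely many $\phi_C$ are non-zero near any given $y$, each value $\tilde g_n(y)$ is a finite $V'$-linear combination and therefore genuinely lies in $V'$ (this matters since $V'$ is only normed, so a Bochner-style mollification is not directly available); $\tilde g_n$ is Lipschitz by local finiteness of the partition and bounded by $\sup|h_n|$; and choosing $\delta_n$ smaller than the modulus of continuity of $h_n$ at scale $1/n$ gives $\sup_{V_n}|\tilde g_n-h_n|_{V'}\le 1/n$.

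To verify uniform convergence on an arbitrary compact $K\subset V$, fix $M$ with $K\subset B(0,M)$; for $n\ge M$ we have $P_n K\subset V_n\cap\overline{B(0,n)}$, on which $h_n$ coincides with $f$, so
\[
|f_n(x)-f(x)|_{V'}\le |\tilde g_n(P_n x)-f(P_n x)|_{V'}+|f(P_n x)-f(x)|_{V'}\le \tfrac{1}{n}+|f(P_n x)-f(x)|_{V'}.
\]
The second term tends to zero uniformly on $K$ because $\sup_{x\in K}|P_n x-x|_V\to 0$ and $f$ is uniformly continuous on the set $K\cup\bigcup_n P_n K$, which is totally bounded in $V$ (contained, for $n$ large, in an arbitrarily small neighbourhood of $K$, together with a finite union of compacts for the remaining small $n$) and hence has compact closure in the complete space $V$. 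The step I anticipate as most delicate is keeping $\tilde g_n$ simultaneously \emph{globally} Lipschitz, \emph{globally} bounded, and valued in the possibly-incomplete $V'$: the cube-based Lipschitz partition of unity together with the preliminary radial retraction is precisely what reconciles these three demands.
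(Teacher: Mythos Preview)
Your proposal is correct and follows essentially the same strategy as the paper's proof: project onto a finite-dimensional coordinate subspace via $P_n$, discretise on a cubical grid, and form convex combinations of the values of $f$ at grid points to obtain a bounded Lipschitz map with values in $V'$. The only differences are cosmetic---the paper uses explicit multilinear interpolation over the $2^n$ vertices of each grid cube where you use a Lipschitz partition of unity, and it achieves boundedness by truncating $f$ in the target $V'$ (replacing $f(x)$ by $nf(x)/|f(x)|$ when $|f(x)|>n$) rather than by your radial retraction in the domain---but the structure of the construction and the convergence argument on compacts are the same.
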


\begin{proof} We only need to consider $V=\ell^2$. Write $\mathbb{Z}/n=\{x\in\mathbb{R}:nx\in\mathbb{Z}\}$. First define $g_n^*:(\mathbb{Z}/n)^n\to V'$ via $g_n^*(x)=f(x)$ for $|f(x)|\leq n$ and $g_n^*(x)=nf(x)/|f(x)|$ if $|f(x)|\geq n$. Then extend $g_n^*$ to a map $f_n^*:\mathbb{R}^n\to V'$ via affine interpolation. More precisely, consider a map $\lfloor\cdot\rfloor_n:\mathbb{R}\to\mathbb{Z}/n$ via $\lfloor x\rfloor_n=\lfloor nx\rfloor /n$ for $x\in\mathbb{R}$. Given any $x=(x_i)\in\mathbb{R}^n$, $x$ is contained in $\mathcal{C}_x=\prod_{i=1}^n [\lfloor x_i\rfloor_n;\lfloor x_i\rfloor_n+1/n]$. Now define $f_n^*(x)=\sum\{\lambda(d,x)g_n^*(d):d\text{ is a vertex of $C_x$}\},$ with $\lambda(d,x)=\prod_{i=1}^n \mu_i(d_i,x_i)$ and $\mu_i(d_i,x_i)=n(x_i-\lfloor x_i\rfloor_n)$ when $d_i=\lfloor x_i\rfloor_n+1/n$ and $\mu_i(d_i,x_i)=1-n(x_i-\lfloor x_i\rfloor_n)$ when $d_i=\lfloor x_i\rfloor_n$. Then we have $f_n^*(x)$ lies in $\operatorname{Conv}\{g_n^*(d):d\text{ is a vertex of the cube } C_x\}$.

Finally put $f_n=f_n^*p_n$ where $p_n:\ell_2\to\mathbb{R}^n$ is the orthogonal projection.

The proof that $f_n$ is Lipschitz continuous, and that $f_n$ converges to $f$ on compacts, are given in \cite{610367}.    
\end{proof}

Now for any $t>0$, we can apply the above Proposition to the continuous map $G(t,x):H\to\mathcal{L}(H_0,H)$ where $H$ is a separable Hilbert space and $\mathcal{L}(H_0,H)$ is a normed space. When the drift $B$ is also continuous, we can apply this proposition to $B(t,x):H\to H$.

When $f$ has some more properties, $f^*_n$ will inherit the same property when $n$ is large, and so does $f_n$. Say if we consider $f=G(t,x)$ for the diffusion coefficient $G$, then as $G$ is uniformly non-degenerate, $f_n^*$ is also uniformly non-degenerate for $n$ large, and so does $f_n$. This is because $f_n^*(x)$ is a weighted sum of $g_n^*(d)$ for $d$ vertices of $C_x$, and when $n$ is large, by Hölder continuity of $G$, the difference of the values of $G$ at different sites of $C_x$ becomes negligible. Use also the fact that given an operator $A$ on $H$ with very small operator norm, then $I+A$ has a right inverse on $H$. This proves the claim. 

Moreover, assume $G(t,x)$ is $\beta>0$-Hölder continuous in $x$, then $f^n$ will also be $\beta$-Hölder with Hölder constant uniform in $n$, at least for any $x,y\in H$ with $|x-y|>d_n$, Here $d_n$ is some constant that tends to $0$ as $n\to\infty$.  This is because, when $x$ and $y$ lie in the same cube $C_x$, then $|f_n(x)-f_n(y)|$ is at most $C(\frac{1}{n})^\beta$, while if they belong to different cubes $C_x$ and $C_y$, the difference $|f_n(x)-f_n(y)|$ can be well approximated by the difference of $g_n^*(d)$ when $d$ ranges over vertex of $C_x$ and $C_y$ respectively, and the latter is bounded by $|x-y|^\beta$ up to some constant. Thus whenever $|x-y|$ is much larger than $\frac{1}{n}$, which is the typical size of the cube $C_x$, we can regard $f^*_n(x)$, and hence $f_n(x)$, as $\beta$-Hölder functions with uniform in $n$ Hölder constants.

\subsection{Proof of weak uniqueness}\label{weakunique}

Now we prove the uniqueness of the weak mild solution under assumptions $\mathbf{H}1$ through $\mathbf{H}5$. We also assume that\footnote{In Section \ref{weakexistence} we will show that Assumption $\mathbf{H}6$ is implied by Assumptions $\mathbf{H}1$ to $\mathbf{H}5$.}

$\mathbf{H}6$: for the given initial value $(u_0,v_0)\in \mathcal{H}_1$, there exists a weak mild solution to \eqref{stract}. Moreover, for any $\epsilon>0$ there exists a compact subset $K\subset H$ such that with probability at least $1-\epsilon$, $u(t)$ never leaves $K$ for $t\in[0,T]$.
The proof in this section is inspired by \cite{kulik2020well}.

We first make several important observations:
\begin{enumerate}
    \item From the mild formulation \eqref{eq2.4geg} of the SPDE \eqref{stract}, and that $\Pi_1 Z(s)=u(s),$ it is clear that we only need to prove weak uniqueness for the first component $u(s)$ of $Z(s)=(u(s),v(s))$. Once that is proved, weak uniqueness for the whole process $Z(s)$ follows from the mild formulation \eqref{eq2.4geg}.

\item  By the non-degeneracy assumption $\mathbf{H}4$ of the diffusion coefficient $G$, and the linear growth assumption of $B$ in $\mathbf{H}5$, we see that once weak uniqueness is proved for the SPDE with $B=0$, weak uniqueness for general $B$ follows from an application of Girsanov theorem. Therefore in the following proof we assume $B=0$.

\end{enumerate}

As shown in the previous section, we can find a sequence of operators $G^n(t,x)$, which are Lipschitz in $x$, and moreover they are uniformly non-degenerate and $\beta$-Hölder in $x$ with $n$-independent coefficients.

Now consider the following three SPDEs, with the same initial value $(u_0,v_0)\in \mathcal{H}_1$:
\begin{equation}\label{unieqn1}
\mu \frac{\partial^2 u(t)}{\partial t^2}=Au(t) -\zeta \frac{\partial u(t)}{\partial t} +G(t,u(t)) \frac{dW_t}{dt},
\end{equation}
\begin{equation}\label{unieqn2}
\mu \frac{\partial^2 u^n(t)}{\partial t^2}=Au^n(t) -\zeta \frac{\partial u^n(t)}{\partial t} +G^n(t,u^n(t)) \frac{dW_t}{dt},
\end{equation}
\begin{equation}\label{unieqn3}
\mu \frac{\partial^2 \widetilde{u}^n(t)}{\partial t^2}=A\widetilde{u}^n(t) -\zeta \frac{\partial \widetilde{u}^n(t)}{\partial t} +\lambda(u(t)-\widetilde{u}^n(t))1_{t\leq\tau}
+G^n(t,\widetilde{u}^n(t)) \frac{dW_t}{dt},
\end{equation}
where $\tau$ is some stopping time adapted to the filtration $(\mathcal{F}_t)_{t\geq 0}$ and $\lambda>0$ is a constant to be determined later in the proof.

In this proof we aim at proving weak uniqueness of \eqref{unieqn1}. By Assumption $\mathbf{H}6$, we have constructed a weak mild solution to \eqref{unieqn1} on some filtered probability space. Since $G^n$ is Lipschitz continuous, \eqref{unieqn2} and \eqref{unieqn3} have strong uniqueness, so we can solve \eqref{unieqn2} and \eqref{unieqn3} on the same probability space as \eqref{unieqn1} and assume they are driven by the same cylindrical white noise $W$.

Given any compact subset $K\subset H$ containing the initial value $u_0$, denote by 
$$\Delta_K^n:=\sup_{t\geq 0}\sup_{y\in K}|G^n(t,y)-G(t,y)|_{\mathcal{L}(H_0,H)}.$$
Since $G^n(t,x)$ converges to $G(t,x)$ for each $x\in H$, that $K$ is compact and $G(t,x)$ is uniformly continuous 
in $x$, we deduce that $\lim_{n\to\infty}\Delta_K^n=0$ since all the estimates are time independent.
Now define two stopping times
$$
\tau_K^n:=\inf\{t\geq 0:|u(t)-\widetilde{u}^n(t)|\geq 2\Delta_K^n\},
$$
and 
$$
\theta_K:=\inf\{t\geq 0:u(t)\notin K\}.
$$
In the reset of the proof we consider $$\tau=\tau_K^n\wedge\theta_K,\quad \lambda=(\Delta_K^n)^{\gamma-1},$$ where $\gamma$ will be specified later. We may slightly enlarge $\Delta_k^n$, but still assume $\lim_{n\to\infty}\Delta_K^n=0,$ such that $\Delta_K^n$ is much larger than the length scale for $G^n$ to be uniformly $\beta$-Hölder continuous in $x$, as discussed at the end of Section \ref{767767}.

Now we can apply Girsanov theorem to estimate $u^n(t)-\widetilde{u}_n(t)$. Denote by $d_{TV}$ the total variation distance on $\mathcal{C}([0,T];H)$ and $H_{rel}(\cdot\mid\cdot)$ the relative entropy between two probability measures on $\mathcal{C}([0,T];H)$. By Pinsker's inequality, 
$$d_{TV}(\operatorname{Law}(u^n|_{[0,T]}),\operatorname{Law}(\widetilde{u}^n|_{[0,T]}))\leq \sqrt{2H_{rel}(\operatorname{Law}(u^n|_{[0,T]})\mid \operatorname{Law}(\widetilde{u}^n|_{[0,T]}))}.$$
To compute the relative entropy, we use Girsanov transform, noting that the SPDE solved by $\widetilde{u}^n$ \eqref{unieqn3} can be obtained by the SPDE solved by $u^n$ \eqref{unieqn2} via adding a drift to the Brownian sample path
\begin{equation}
    (dW_t)_{t\geq o}\mapsto (dW_t+(G^n(t,u^n(t)))^{-1}\lambda(u(t)-{u}^n(t))1_{t\leq\tau}dt)_{t\geq 0}.
\end{equation}
Since $(G^n)^{-1}$ is a bounded operator, recalling the definition of the stopping time $\tau$ and $\tau_K^n$, and the choice of $\lambda$, we deduce that
\begin{equation}\label{totalvariation}
    d_{TV}(\operatorname{Law}(u^n|_{[0,T]}),\operatorname{Law}(\widetilde{u}^n|_{[0,T]}))\leq CT^{1/2}(\Delta_K^n)^\gamma.
\end{equation}

Now we estimate $u(t)-\widetilde{u}^n(t)$ via pathwise arguments. For $0\leq  \tau \leq t$ we have the almost sure bound, whenever $\Delta_K^n\leq 1:$  in the following we use $|\cdot|_{\mathcal{L}(H_0,H)}$ for the norm of $G(t,x)$:
\begin{equation}\label{triangle}
\begin{aligned}
    |G(t,u(t))-G^n(t,\widetilde{u}^n(t))|&\leq |G(t,u(t))-G^n(t,u(t))|+|G^n(t,u(t))-G^n(t,\widetilde{u}^n(t))|
    \\&\leq \Delta_K^n+ M(2\Delta_K^n)^\beta\leq C(\Delta_K^n)^\beta,
\end{aligned}\end{equation}
where in the second inequality, for the first term we use that before time $\tau$, $u(t)$ stays in $K$ and $|G-G^n|\leq \Delta_K^n$ on $K$; and for the second term we use the Hölder continuity of $G^n$ and that $|u-\widetilde{u}^n|\leq\Delta_K^n$ for $t\in[0,\tau]$. Here we also use that $\Delta_K^n$ is much larger than the length scale for $G^n$ to be $\beta$-Hölder with uniform in $n$ constants.

By the mild formulation \eqref{generalpi1} and the definition of $\mathcal{S}_\mu^\lambda$,

$$ u(t)-\widetilde{u}^n(t)=\int_0^t \Pi_1 \mathcal{S}_\mu^\lambda(t-s)\mathcal{I}_\mu [G(s,u(s))-G^n(s,\widetilde{u}^n(s))]dW_s,\quad 0\leq  t\leq \tau.
$$
Applying the maximal inequality \eqref{tau1.38lasttheorem}, Markov's inequality and estimate \eqref{triangle}, we deduce that for each $\eta\in(0,\eta_0)$ and $m\in\mathbb{N}_+$ we can find some $C=C_{\eta,m,\mu}>0$ such that, for each $R>0$, [Recall the choice $\lambda=(\Delta_K^n)^{\gamma-1}$]
\begin{equation}\label{869}
    \mathbb{P}(\sup_{t\in[0,T\wedge\tau]}|u(t)-\widetilde{u}^n(t)|\geq C(\Delta_K^n)^{\frac{1-\gamma}{2}\eta+\beta}R)\leq\frac{C}{R^m}.
\end{equation}

By our assumption $\beta>1-\frac{\eta_0}{2}$, we may choose $\gamma>0$ sufficiently small and $\eta<\eta_0$ sufficiently close to $\eta_0$ such that $\frac{1-\gamma}{2}\eta+\beta>2\chi+1>1$ for some $\chi>0$. Now we take $R=(\Delta_K^n)^{-\chi}$ so that 
\begin{equation}\label{870}
    \mathbb{P}(\sup_{t\in[0,T\wedge\tau]}|u(t)-\widetilde{u}^n(t)|\geq C(\Delta_K^n)^{1+\chi})\leq C(\Delta_K^n)^{m\chi}.
\end{equation}

Consider the event 
$$\Omega_\nu:=\{\sup_{t\in[0,\tau]}|u(t)-\widetilde{u}^n(t)|>C (\Delta_K^n)^{1+\chi}\},$$ then on $\Omega\setminus\Omega_\nu$, one must have $\|u(t)-\widetilde{u}^n(t)|\leq \Delta_K^n$ for $t\in[0,\tau]$, then by continuity of trajectories this must hold all the way up to $T\wedge\theta_K$. In other words, on $\Omega\setminus\Omega_\nu$,
$$\theta_K\wedge T\leq \tau_K^n.$$ This implies, for any $\kappa>0$, as $n$ gets large $\Delta_K^n\to 0$, we have
\begin{equation}\label{expexpepx}
    \mathbb{P}(\sup_{t\in[0,T\wedge \theta_K]}|u(t)-\widetilde{u}^n(t)|>\kappa)\to 0,\quad n\to\infty.
\end{equation}

We can now finish the proof of uniqueness in law. Consider $F$ a real-valued, bounded continuous function defined on $\mathcal{C}([0,T]; H)$. Once we can show that
\begin{equation}\label{anyany}
    \mathbb{E}[F( u|_{[0,T]})-F( u^n|_{[0,T]})]\to 0,\quad n\to\infty,
\end{equation}
uniqueness in law for $u$ holds. This is because the law of each $u^n$ is well-defined (note that $G^n$ is Lipschitz). If the law of $u|_{[0,T]}$ were not unique, any weak solution of $u$ should satisfy all the above estimates, so any weak solution $u$ should satisfy \eqref{anyany}, but this forces $\mathbb{E}[F(u|_{[0,T]})]$ to be the same for each weak solution $u$ given any test function $F$, and hence the law of $u$ is uniquely determined. 

 To check \eqref{anyany}, note first that \eqref{totalvariation} implies
\begin{equation}\label{anyany2}
    \mathbb{E}[F(\widetilde{u}^n|_{[0,T]})-F( u^n|_{[0,T]})]\to 0,\quad n\to\infty,
\end{equation}
and \eqref{expexpepx} implies that on $\{\theta_K\geq T\}\cap(\Omega\setminus\Omega_\nu),$ we have that $\widetilde{u}^n|_{[0,T]}$ converges in probability to $u_{[0,T]}$ as $n\to\infty$. Since $\lim_{n\to\infty}\mathbb{P}(\Omega_\nu)=0,$ we conclude that 
\begin{equation}\label{anyanys}
   \lim\sup_{n\to\infty} \left|\mathbb{E}[F( u|_{[0,T]})-F( u^n|_{[0,T]})]\right|\leq 2\sup_x |F(x)|\mathbb{P}(\theta_K\leq T).
\end{equation}
By Assumption $\mathbf{H}6$, we can find compact sets $K\subset H$ such that $\mathbb{P}(\theta_K\leq T)$ is arbitrarily small. As the left hand side of \eqref{anyanys} does not depend on $K$, we have established 
\eqref{anyany}, and finally finished the proof of weak uniqueness for $u(t)$.

\subsection{Proof of weak existence: step 2}\label{weakexistence}
In this section we prove that Assumption $\mathbf{H}6$ holds assuming Assumptions $\mathbf{H}1$ to $\mathbf{H}5$, thus establishing weak existence of \eqref{stract}.

In this proof it will be convenient to work with the following Wasserstein distance on $\mathcal{C}([0,T];H)$: define a distance $d$ on $\mathcal{C}([0,T];H)$ as: for any $\varphi_1,\varphi_2\in \mathcal{C}([0,T];H),$ 
$$d(\varphi_1,\varphi_2)=\sup_{0\leq t\leq T}|\varphi_1(t)-\varphi_2(t)|_H\wedge 1,$$
then for any probability measure $\mu$ and $\nu$ on $\mathcal{C}([0,T];H)$, we define the Wasserstein distance of $\mu$ and $\nu$ as: 
$$W(\mu,\nu)=\inf_{(X,Y)}\mathbb{E}[ d(X,Y)],$$
where $(X,Y)$ ranges over all couplings of $\mu$ and $\nu$, with $\operatorname{Law}(X)=\mu$ and $\operatorname{Law}(Y)=\nu$. Note that the metric $d$ we defined is equivalent to the standard metric on $\mathcal{C}([0,T];H)$.

Recall that we have constructed Lipschitz approximations $G^n$, and we set out to compare the Wasserstein distance between 
\begin{equation}\label{unieqn200}
\mu \frac{\partial^2 u^n(t)}{\partial t^2}=Au^n(t) -\zeta \frac{\partial u^n(t)}{\partial t} +G^n(t,u^n(t)) \frac{dW_t}{dt},
\end{equation}
and \begin{equation}\label{unieqn20}
\mu \frac{\partial^2 u^m(t)}{\partial t^2}=Au^m(t) -\zeta \frac{\partial u^m(t)}{\partial t} +G^m(t,u^m(t)) \frac{dW_t}{dt},
\end{equation}
with the same initial data $(u_0,v_0)$. 

First note that, for every $\epsilon>0$, we can find a compact set $K_\epsilon\subset H$ such that for each $n$, with probability at least $1-\epsilon$, $u^n(t)$ will stay in $K_\epsilon$ throughout $t\in[0,T]$. This is guaranteed by Proposition \ref{proposition}, the compact embedding of $H^\delta$ into $H$, and Markov's inequality. Let $\tau_{K_\epsilon}^n$ denote the first time $u^n(t)$ leaves $K_\epsilon$, then $\mathbb{P}(\tau_{K_\epsilon}^n< T)\leq \epsilon$.

Now we consider the auxiliary process 
\begin{equation}\label{**!!12306unieqn3}
\mu \frac{\partial^2 \widetilde{u}^m(t)}{\partial t^2}=A\widetilde{u}^m(t) -\zeta \frac{\partial \widetilde{u}^m(t)}{\partial t} +\lambda(u^n(t)-\widetilde{u}^m(t))1_{t\leq\tau}
+G^m(t,\widetilde{u}^m(t)) \frac{dW_t}{dt},
\end{equation} 

Assume that $m,n\geq N$ and consider, for any compact subset $K\subset H$: 
$$c_{N,K}:=\sup_{t\geq 0}\sup_{n\geq N}\sup_{x\in K} |G^n(t,x)-G(t,x)|_{\mathcal{L}(H_0,H)}.$$
By definition $\lim_{N\to\infty} c_{N,K}=0.$ Assume $N$ is already chosen large enough sot that $c_{N,K_\epsilon}\leq \frac{\epsilon}{4}$. Now we set  (abbreviating the subscript $K_\epsilon$ and simply write $K$): $\lambda=c_{N,k}^{\gamma-1}$ and $\tau:=T\wedge \tau_{K_\epsilon}^n\wedge \{t\geq 0:|u^n(t)-\widetilde{u}^m(t)|\geq c_{N,K}\}$, where $\gamma$ takes the same value as in Section \ref{weakunique}. 

Then a total variation estimate shows that 
$$d_{TV}(\operatorname{Law}(u^m|_{[0,T]},\operatorname{Law}(\widetilde{u}^m|_{[0,T]})))\leq C(c_{N,K})^\gamma\leq \epsilon^\gamma.$$

A pathwise argument as in \eqref{869}, \eqref{870} shows, for some $\chi>0$, $\mathfrak{\chi}_0>0,$
\begin{equation}
  \mathbb{P}(\sup_{t\in[0,T\wedge\tau]}|u^n(t)-\widetilde{u}^m(t)|\geq (c_{N,K})^{1+\chi_0})\leq (c_{N,K}) ^\chi\leq\epsilon^\chi. 
\end{equation}

By continuity of trajectories and the fact that $\chi_0>0$, we see that if $|u^n(t)-\widetilde{u}^n(t)|\leq (C_{N,K})^{1+\chi_0}$ for $t\in[0,T\wedge \tau]$, then necessarily $\tau=T\wedge \tau^n_{K_\epsilon}$. By assumption $\mathbb{P}(\tau_{K_\epsilon}^n< T)\leq \epsilon$, so that upon slightly changing the value of $\chi>0$, we can conclude with 
\begin{equation}
  \mathbb{P}(\sup_{t\in[0,T]}|u^n(t)-\widetilde{u}^m(t)|\geq c_{N,K})\leq\epsilon^\chi. 
\end{equation}

By definition of total variation distance, we can find a coupling of $u^m|_{[0,T]}$ and $\widetilde{u}^m|_{[0,T]}$ such that they are identical on an event of probability at least $1-\epsilon^\gamma$, and arbitrary otherwise. From this we can deduce a coupling of $u^n|_{[0,T]}$ and $u^m|_{[0,T]}$: we use the same Wiener process to drive $u^n$ and $\widetilde{u}^m$, and then combine with the coupling of $u^m$ and $\widetilde{u}^m$ to obtain a coupling of $u^n$ and $u^m$. 

Note that the distance $d$ is upper bounded by $1$, thus we have, for some $\omega>0$,
$$W(u^n|_{[0,T]},u^m|_{[0,T]})\leq c_{N,K}+\epsilon^\chi+\epsilon^\gamma+\epsilon \leq C\epsilon^\omega.$$ 

We can restate our finding as follows: for any $\epsilon>0$, we can find $N>0$ such that whenever $n,m\geq N$, we have $W(u^n|_{[0,T]},u^m|_{[0,T]})\leq \epsilon.$
This implies that $u^n|_{[0,T]}$ forms a weakly converging subsequence. Applying Skorokhod embedding theorem, the limit should solve the stochastic wave equation \eqref{stract} by continuity of the coefficient $G$. This establishes weak existence, and hence finishes the proof of Theorem \ref{theorem1.1}. That the solution will stay in a compact subset with high probability again follows from Proposition \ref{proposition}.

\subsection{Alternative proof of existence and uniqueness assuming uniform approximation}\label{alternative}

In this section we give an alternative, shorter proof of weak existence and uniqueness under the following stronger assumption

$\mathbf{H}7$: There exists a sequence of mappings $G^n(t,x):[0,\infty)\times H\to \mathcal{L}(H_0,H)$ that satisfy assumptions $\mathbf{H}3,\mathbf{H}4,\mathbf{H}5$ with uniform in $n$ constants, that each $G^n$ is Lipschitz in $x$ with Lipschitz constant uniform in $t$, and moreover we have the uniform approximation
\begin{equation}
    \lim_{n\to\infty} \sup_{t>0}\sup_{x\in H} |G^n(t,x)-G(t,x)|_{\mathcal{L}(H_0,H)}=0.
\end{equation}

Though assumption $\mathbf{H}7$ looks much stronger, it indeed covers many interesting examples of random field solutions to the stochastic wave equation. Say we consider $H=L^2([0,1];\mathbb{R})$ and $G(t,u)(x)=g(t,u(t,x))$ for any $u\in H$. Assume that for some $\beta\in(\frac{3}{4},1]$ we have
    \begin{equation}
        g(t,u(t,x))=1+|u(t,x)|^\beta,
    \end{equation}
so the only place the noise coefficient $g$ is non-Lipschitz is at $u=0$. We can find a sequence of $g_n$ approximating $g$, each $g_n$ being Lipschitz continuous in $u$, and \begin{equation}\label{funnyguide}\sup_{t\geq 0}\sup_{u\in\mathbb{R}}|g_n(t,u)-g(t,u)|\to 0,\quad n\to\infty.\end{equation} Defining $G^n$ accordingly via $G^n(t,u)v=g_n(t,u(t,x))v(x),x\in[0,1]$ for any $v\in L^\infty([0,1])$, so that the approximating sequence $G^n$ satisfies $\mathbf{H}7$. More generally, one could consider diffusion coefficients $g$ that are Lipschitz continuous away from finitely many irregular points, or consider those coefficients $g$ such that we can find a sequence $g_n$ uniformly approximating $g$ in the sense of \eqref{funnyguide}.

 We follow roughly the same steps as in the previous section: we may assume $B=0$ and consider the following SPDEs

\begin{equation}\label{12306unieqn1}
\mu \frac{\partial^2 u(t)}{\partial t^2}=Au(t) -\zeta \frac{\partial u(t)}{\partial t} +G(t,u(t)) \frac{dW_t}{dt},
\end{equation}
\begin{equation}\label{12306unieqn2}
\mu \frac{\partial^2 u^n(t)}{\partial t^2}=Au^n(t) -\zeta \frac{\partial u^n(t)}{\partial t} +G^n(t,u^n(t)) \frac{dW_t}{dt},
\end{equation}
\begin{equation}\label{12306unieqn3}
\mu \frac{\partial^2 \widetilde{u}^n(t)}{\partial t^2}=A\widetilde{u}^n(t) -\zeta \frac{\partial \widetilde{u}^n(t)}{\partial t} +\lambda(u(t)-\widetilde{u}^n(t))1_{t\leq\tau}
+G^n(t,\widetilde{u}^n(t)) \frac{dW_t}{dt},
\end{equation} 
Under assumption $\mathbf{H}7$ we no longer need to consider the compact subset $K$, and we can use the (globally uniformly) approximating sequence $G^n$ given in $\mathbf{H}7$ . 

Denote by 
$$c_n:=\sup_{t\geq 0}\sup_{x\in H}|G^n(t,x)-G(t,x)|_{\mathcal{L}(H_0,H)}$$ and consider $\tau:=T\wedge \inf\{t\geq 0:|u(t)-\widetilde{u}^n(t)|\geq 2c_n\}$. Then we set $\lambda=(c_n)^{\gamma-1}$ and find some $\eta<\eta_0$ close to $\eta_0$ such that $\frac{1-\gamma}{2}\eta+\beta>2\chi+1>1$ for some $\chi>0$. Then with exactly the same proof as in the previous section, we have

\begin{equation}\label{12315riation}
    d_{TV}(P_nu^n|_{[0,T]},P_n\widetilde{u}^n|_{[0,T]})\leq CT^{1/2}(c_n)^\gamma.
\end{equation}
and 
\begin{equation}
    \mathbb{P}(\sup_{t\in[0,T\wedge\tau]}|u(t)-\widetilde{u}^n(t)|\geq C(c_n)^{1+\chi})\leq C(c_n)^{m\chi}.
\end{equation}
so that for any $\kappa>0$,
\begin{equation}\label{12315expexpepx}
    \mathbb{P}(\sup_{t\in[0,T]}|u(t)-\widetilde{u}^n(t)|>\kappa)\to 0,\quad n\to\infty.
\end{equation}

Combining these arguments, we have established uniqueness of weak mild solution to \eqref{stract} in exactly the same way as in the previous section. 

For weak existence, following (a simplified version of) the argument in Section \ref{weakexistence}, we deduce that $u^n|_{[0,T]}$ forms a Cauchy sequence with respect to the Wasserstein distance, hence has a converging subsequence and the limit is a weak mild solution to \eqref{stract}.

\section{The Smoluchowski-Kramers approximation}\label{Section4}

\subsection{Mass-independent estimates of stochastic integrals} In this section we only focus on the $\zeta=1$ case, i.e. the stochastic wave equation with damping term. 
In proving the small-mass limit of the damped stochastic wave equation, the maximal inequality estimate in Proposition \eqref{proposition 1.8} is not enough because the numerical constant will diverge in the limit $\mu\to 0$.

All the estimates in this section are modifications to those in \cite{salins2019smoluchowski}, Section 6. The main difference is that, in \cite{salins2019smoluchowski}, all the eigenvalues are $-\alpha_k$, but in our case they are $-\alpha_k-\lambda$. We will use the summability of $\alpha_k$ to extract a factor of $\lambda$ out. For that purpose, we have to rewrite almost every proof in \cite{salins2019smoluchowski}, Section 6 instead of quoting them, but the essential ideas are not changed.

In all the following estimates, the constant $C_{\eta,p}$ is finite and may implicitly depend on the terminal time $T>0$, but its precise value is irrelevant to us so we do not discuss its precise dependence on $T$. All the estimates we derive hold uniformly over all $t\in[0,T]$ with the same numerical constant $C_{\eta,p}$.

\begin{proposition}\label{proposition 4.8extra}
Assume that there is some $\eta_0\in(0,1)$ such that for some constant $C>0$, \begin{equation}\label{summoperator4.8}
\sum_n \frac{1}{\alpha_n^{1-\eta}}<\infty, \quad \frac{1}{C} k^{\frac{1}{1-\eta_0}}\leq \alpha_k\leq Ck^{\frac{1}{1-\eta_0}} \quad \forall k\in\mathbb{N}_+
\end{equation}
for each $\eta\in(0,\eta_0)$. Then for each fixed $\lambda>0$ we can find some $\mu_0=\mu_0(\lambda)>0$ as a function of $\lambda$ such that for any $\mu\in(0,\mu_0(\lambda)]$,
for each $p\geq 2$ and each $\eta\in(0,\eta_0),$ 
\begin{equation}
    \mathbb{E}[|\Pi_1\Gamma_\alpha^{\mu,\lambda}(t)|_H^p]\leq C_{\eta,p}\mathbb{E}\|\Phi\|_t^p (\lambda^{-\frac{\eta p}{2}}+\mu^{(\eta_0-2\alpha)p/2}),\quad\text{ for any }t\in[0,T].
\end{equation} The constant $c_{\eta,p}$ depends on $\eta$, $p$ and $T$ but does not depend on $\lambda$ and $\mu$. By choosing $\mu_0(\lambda)$ small, we may discard the $\mu^{(\eta_0-2\alpha)p/2}$ factor and the estimate does not depend on $\mu$.
\end{proposition}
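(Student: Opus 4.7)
The plan is to revisit the proof of Proposition \ref{proposition 1.8} in the $\zeta=1$ case and sharpen the high-mode estimate using the two-sided eigenvalue bound in \eqref{summoperator4.8}, which was unavailable before. As in that proof, I start from the Burkholder-Davis-Gundy inequality and reduce everything to a bound on the quadratic variation
\begin{equation*}
\Lambda_\alpha^{\mu,\lambda}(t) \leq \int_0^t (t-s)^{-2\alpha}\|\Phi(s)\|_{\mathcal{L}(H_0,H)}^2\Bigl(\sum_{k=1}^\infty f_\mu^\lambda(k)^2(t-s,0,1/\mu)\Bigr)\,ds,
\end{equation*}
and then split the inner sum into the low modes $k\leq N_\mu^\lambda$ (call this $J_1$) and the high modes $k>N_\mu^\lambda$ (call this $J_2$), using \eqref{zeta1est1} and \eqref{zeta3est3} respectively. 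The low-mode contribution is handled by the exact same interpolation trick $e^{-2(\alpha_k+\lambda)t}\leq c_{r,\eta}\alpha_k^{-(1-\eta)r}\lambda^{-\eta r}t^{-r}$ as in \eqref{J1J1}, giving $J_1(t)\leq c_\eta\lambda^{-\eta r}t^{-r}$ and, after convolving with $(t-s)^{-2\alpha}$, a contribution of size $c_{\eta,p,T}\lambda^{-\eta r}$ with $r$ arbitrarily close to $1$; this is what produces the $\lambda^{-\eta p/2}$ term and crucially is $\mu$-independent.

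The real novelty is the high-mode piece. By definition $N_\mu^\lambda$ is the largest $k$ with $\alpha_k+\lambda\leq 1/(4\mu)$, so by the two-sided bound $\alpha_k\asymp k^{1/(1-\eta_0)}$ I get, for all $\mu$ smaller than some $\mu_0(\lambda)$ (which ensures $\lambda \leq 1/(8\mu)$), that $N_\mu^\lambda \asymp (1/\mu)^{1-\eta_0} = \mu^{\eta_0-1}$. The same two-sided bound then yields
\begin{equation*}
\sum_{k>N_\mu^\lambda}\frac{1}{\alpha_k+\lambda}\leq \sum_{k>N_\mu^\lambda}\frac{C}{k^{1/(1-\eta_0)}}\leq C'\bigl(N_\mu^\lambda\bigr)^{-\eta_0/(1-\eta_0)}\leq C''\mu^{\eta_0},
\end{equation*}
so that $J_2(t)\leq C\mu^{\eta_0-1}e^{-t/(2\mu)}$; notice how the gained factor $\mu^{\eta_0}$ eats up the problematic $1/\mu$ that produced the $\mu$-dependent constant in Proposition \ref{proposition 1.8}. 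Convolving this with $(t-s)^{-2\alpha}$ and using $\int_0^\infty s^{-2\alpha}e^{-s/(2\mu)}\,ds=(2\mu)^{1-2\alpha}\Gamma(1-2\alpha)$ I obtain a contribution of size $C_\alpha\,\mu^{\eta_0-2\alpha}$. Combining the two pieces gives $\Lambda_\alpha^{\mu,\lambda}(t)\leq C_{\eta,\alpha}\|\Phi\|_t^2(\lambda^{-\eta r}+\mu^{\eta_0-2\alpha})$, and BDG then yields the stated bound (up to relabeling $\eta r\mapsto \eta$, which is fine since $r$ can be taken arbitrarily close to $1$ at the cost of shrinking $\alpha$).

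The final claim---that by shrinking $\mu_0(\lambda)$ further one may drop the $\mu^{(\eta_0-2\alpha)p/2}$ term---is immediate: since $\alpha$ has been fixed small so that $\eta_0-2\alpha>0$, one simply chooses $\mu_0(\lambda)$ so small that $\mu^{(\eta_0-2\alpha)p/2}\leq \lambda^{-\eta p/2}$ for all $\mu\leq\mu_0(\lambda)$, and absorbs it into the first term. The main obstacle is purely bookkeeping: one has to check that the single threshold $\mu_0(\lambda)$ can be chosen so that simultaneously (i) $\lambda\leq 1/(8\mu)$ (to ensure $N_\mu^\lambda \asymp \mu^{\eta_0-1}$), (ii) the tail sum estimate holds with a constant independent of $\mu$ and $\lambda$, and (iii) $\mu^{(\eta_0-2\alpha)p/2}\leq \lambda^{-\eta p/2}$. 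All three are consistent since each forces $\mu$ to be small relative to an explicit function of $\lambda$, so taking the minimum gives a valid $\mu_0(\lambda)$.
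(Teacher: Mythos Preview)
Your proof is correct and follows essentially the same route as the paper: both reduce to the quadratic variation $\Lambda_\alpha^{\mu,\lambda}$, split into low and high modes at $N_\mu^\lambda$, handle $J_1$ via the interpolation \eqref{J1J1} exactly as in Proposition~\ref{proposition 1.8}, and use the two-sided eigenvalue bound to show $N_\mu^\lambda\asymp\mu^{\eta_0-1}$ and hence $\sum_{k>N_\mu^\lambda}(\mu\alpha_k)^{-1}\lesssim\mu^{\eta_0-1}$, then convolve with the exponential to get $\mu^{\eta_0-2\alpha}$. The paper makes the threshold explicit as $\mu_0(\lambda)=\tfrac{1}{40\lambda}$ (so that $\mu\alpha_k\geq 9/40$ for $k>N_\mu^\lambda$), while you phrase it as $\lambda\leq 1/(8\mu)$; these are equivalent choices and the argument is otherwise identical.
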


\begin{proof}
Recall from the proof of Proposition \ref{proposition 1.8} that $$\begin{aligned}
\Lambda_\alpha^{\mu,\lambda}(t):&=\sum_{j=1}^\infty \int_0^t (t-s)^{-2\alpha}|\Pi_1\mathcal{S}_\mu^\lambda(t-s)\mathcal{I}_\mu \Phi(s)e_j|_H^2ds \\
&\leq\int_0^t (t-s)^{-2\alpha}\|\Phi\|_s^2\left(\sum_{k=1}^\infty f_\mu^\lambda(k)^2(t-s,0,1/\mu)
\right)ds,\end{aligned}
$$
and 
$$\sum_{k=1}^\infty f_\mu^\lambda(k)^2(t,0,1/\,u)\leq \sum_{k=1}^{N_\mu^\lambda}e^{-2(\alpha_k+\lambda)t}+\sum_{k=N_\mu^\lambda+1}^\infty \frac{e^{-\frac{t}{2\mu}}}{\mu(\alpha_k+\lambda)}:=J_1(t)+J_2(t).$$
and that
$$N_\mu^\lambda:=\max\{k\in\mathbb{N}:1-4\mu(\alpha_k+\lambda)\geq 0\},$$
we assume that for the fixed $\lambda>0$, $\mu>0$ is chosen sufficiently small such that $\mu\lambda\leq\frac{1}{40}$, that is, we set $\mu_0(\lambda)=\frac{1}{40\lambda}$. Under this choice for $k>N_\mu^\lambda$ one must have $\mu\alpha_k\geq \frac{9}{40}$ and hence $\alpha_k\geq 9\lambda$ for all $k>N_\mu^\lambda$. Thus we have 
$$J_2(t)\leq e^{-\frac{t}{2\mu}} \frac{9}{10}\sum_{k=N_\mu^\lambda+1}^\infty \frac{1}{\mu\alpha_k}.$$
    To bound this summation, we need to make use of the asymptotic growth of $\alpha_k$. By the second assumption in \eqref{summoperator4.8},  
we have, for some universal constant $C$ and constant $C_{\eta_0}$ depending only on $\eta_0,$
\begin{equation}\label{whenmuislarge}
\begin{aligned}
\sum_{k=N_\mu^\lambda+1}^\infty \frac{1}{\mu\alpha_k} &\leq C\sum_{k=C^{-1}\mu^{-(1-\eta_0)}}^\infty \frac{1}{\mu k^{\frac{1}{1-\eta_0}}}\\&\leq C_{\eta_0} \mu^{-1}\int_{C^{-1}\mu^{-(1-\eta_0)}}^\infty x^{-\frac{1}{1-\eta_0}}dx\leq C_{\eta_0}\mu^{\eta_0-1}.\end{aligned}\end{equation}
Note also the elementary calculation
\begin{equation}\label{elementaryintegral}\int_0^\infty s^{-2\alpha}e^{-\frac{s}{2\mu}}ds=(2\mu)^{1-2\alpha}\int_0^\infty t^{-2\alpha}e^{-t}dt=C\mu^{1-2\alpha},\end{equation} 
so that
$$
\int_0^t (t-s)^{-2\alpha}J_2(t-s)ds\leq \int_0^t s^{-2\alpha}e^{-\frac{s}{2\mu}}\mu^{\eta_0-1}ds\leq \mu ^{\eta_0-2\alpha}.$$
For the term involving $J_1(t)$, we follow exactly the proof of Proposition \ref{proposition 1.8}. Combining everything, we now have
    $$\Lambda_\alpha^{\mu,\lambda}(t)\leq C\|\Phi \|_t^2 (\lambda^{-\eta}+\mu^{\eta_0-2\alpha})$$
    and then the Proposition follows from application of the BDG inequality.
\end{proof}

Before we proceed, we quote the following result on the $\mu\to 0$ convergence.
\begin{lemma}
    Let $f_\mu^\lambda(k)(t,u,v)$ be defined in \eqref{effkmu} with $\lambda>0$ fixed in the sequel. Then 
    \begin{itemize}
        \item For each $k\in\mathbb{N}$, $T>0$ and $u\in\mathbb{R}$,
        \begin{equation}
            \lim_{\mu\to 0}\sup_{t\in[0,T]}|f_\mu^\lambda(k)(t,u,o)-ue^{-(\alpha_k+\lambda)t}|=0.
        \end{equation}
        \item For each $k\in\mathbb{N},$ $T>0$, $t_0\in(0,T]$ and $v\in\mathbb{R}$,
        \begin{equation}
            \lim_{\mu\to 0}\sup_{t\in[t_0,T]}|f_\mu^\lambda(k)(t,0,v/\mu)-ve^{-(\alpha_k+\lambda) t}|=0.
        \end{equation}
        \item For each $k\in\mathbb{N}$, $t>0$ and $v\in\mathbb{R}$,
        \begin{equation} \label{uniformfantastic}       \lim_{\mu\to 0}f_\mu^\lambda(k)'(t,0,v)=0.
        \end{equation}
    \end{itemize}
\end{lemma}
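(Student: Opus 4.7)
The plan is to solve the ODE \eqref{effkmu} explicitly and track the asymptotics of its characteristic roots as $\mu\to 0$. Writing $a_k:=\alpha_k+\lambda$, the characteristic polynomial $\mu r^2+r+a_k=0$ has roots
\[
r_\pm^\mu \;=\; \frac{-1\pm\sqrt{1-4\mu a_k}}{2\mu},
\]
which are real and distinct once $\mu<1/(4a_k)$. A Taylor expansion of the square root gives the asymptotics $r_+^\mu=-a_k+O(\mu)$ and $r_-^\mu=-1/\mu+a_k+O(\mu)$ as $\mu\to 0$, together with $r_-^\mu-r_+^\mu=-\sqrt{1-4\mu a_k}/\mu\to -\infty$ at rate $1/\mu$. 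The general solution reads $f_\mu^\lambda(k)(t,u,v)=A_\mu e^{r_+^\mu t}+B_\mu e^{r_-^\mu t}$, with the two constants determined by the linear system $A_\mu+B_\mu=u$ and $A_\mu r_+^\mu+B_\mu r_-^\mu=v$.

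Each of the three bullets then follows by reading off $A_\mu,B_\mu$ from this system and combining their limits with those of the two exponential factors. For the first bullet ($v=0$), one has $B_\mu=-ur_+^\mu/(r_-^\mu-r_+^\mu)\to 0$ and hence $A_\mu\to u$; since $e^{r_+^\mu t}\to e^{-a_k t}$ uniformly on $[0,T]$ and $|B_\mu e^{r_-^\mu t}|\leq|B_\mu|\to 0$, uniform convergence on $[0,T]$ is immediate. For the second bullet ($u=0$, velocity $v/\mu$), a direct computation gives $B_\mu=-v/\sqrt{1-4\mu a_k}\to -v$ and $A_\mu\to v$; the leading term $A_\mu e^{r_+^\mu t}$ converges uniformly to $v e^{-a_k t}$ on $[0,T]$, while $|B_\mu e^{r_-^\mu t}|\leq(|v|+o(1))\,e^{r_-^\mu t_0}\to 0$ on $[t_0,T]$ because $r_-^\mu\to-\infty$. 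For the third bullet, both $A_\mu$ and $B_\mu$ tend to $0$, but the product $B_\mu r_-^\mu=v(1+\sqrt{1-4\mu a_k})/(2\sqrt{1-4\mu a_k})\to v$ stays bounded; differentiating gives $f_\mu^\lambda(k)'(t,0,v)=A_\mu r_+^\mu e^{r_+^\mu t}+B_\mu r_-^\mu e^{r_-^\mu t}$, and for fixed $t>0$ both terms vanish, the first because $A_\mu\to 0$ with $r_+^\mu e^{r_+^\mu t}$ bounded, the second because $e^{r_-^\mu t}\to 0$ while $B_\mu r_-^\mu$ is bounded.

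No step is a genuine obstacle: the whole argument is elementary ODE asymptotics, and all three claims reduce to tracking how two real constants and two real exponentials behave as $\mu\to 0$. The one point worth flagging is the boundary layer in the second bullet, which forces the restriction $t\geq t_0>0$: at $t=0$ the function equals $0$, whereas the proposed limit $v e^{-a_k\cdot 0}=v$ does not, because the initial velocity $v/\mu$ diverges. This is precisely the familiar obstruction in Smoluchowski-Kramers type limits preventing uniform convergence all the way down to time zero.
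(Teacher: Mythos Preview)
Your proof is correct and follows exactly the approach one expects: explicit characteristic-root asymptotics for the second-order ODE, which is what the paper itself relies on (it simply cites \cite{salins2019smoluchowski}, Theorem 5.8 for the $\lambda=0$ case and notes the general case is identical). Your write-up in fact supplies the details the paper omits.
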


\begin{proof}
    Let $f_\mu^\lambda(k)(t):=f_\mu^\lambda(k)(t,u,v)$, then it solves the ODE $\mu(f_\mu^\lambda(k))''+(f_\mu^\lambda(k))'+(\alpha_k+\lambda)f_\mu^\lambda=0$. The proof for the case $\lambda=0$ can be found in \cite{salins2019smoluchowski},Theorem 5.8, and the general case for $\lambda>0$ is exactly the same, so we omit the details. 
\end{proof}

Then we have to rework through the proof of Proposition \ref{proposition1.10!} to make sure the constants are $\mu$-independent for all $\mu\in(0,\mu_0(\lambda)]$. Recall the definition $N_\mu^\lambda=\max\{k:1-4\mu(\alpha_k+\lambda)\geq 0\}$, and consider $P_{N_\mu^\lambda}$ the projection onto the subspace spanned by $\{e_1,\cdots,e_{N_\mu^\lambda}\}$.

\begin{proposition}\label{uniformproposition1.10}
Fix $T>0$.   For each $p\geq 2$ and $\eta\in(0,\eta_0),$ we may choose $\alpha>0$ sufficiently small such that,
   for the damped stochastic wave equation $\zeta=1,$ 
\begin{enumerate}
    \item for $\mu\in(0,1)$ and $t\in[0,T],$
    \begin{equation}\label{uniform1.101.35}
\mathbb{E}|P_{N_\mu^\lambda}\Pi_2\Gamma_\alpha^{\mu,\lambda}(t)|_H^p\leq \frac{c_{\eta,p}}{\mu^p}\lambda^{-\frac{\eta p}{2}}\mathbb{E}\|\Phi\|_t^p,
    \end{equation}
    \item For any fixed $t\in[0,T]$ and any fixed $\lambda>0$,
    \begin{equation}\label{uniformgreats}
        \lim_{\mu\to 0}\mu^p \mathbb{E}|P_{N_\mu^\lambda}\Pi_2\Gamma_\alpha^{\mu,\lambda}(t)|_H^p=0.
    \end{equation}
    \item for any fixed $\lambda>0$, we may find a $\mu_0(\lambda)>0$ and a constant $\bar{\zeta}>0$ such that for all $t\in[0,T]$ and all $\mu\in(0,\mu_0(\lambda)]$ we have,
    \begin{equation}\label{prop4.33}
        \mathbb{E}|(I-P_{N_\mu^\lambda})\Pi_2\Gamma_\alpha^{\mu,\lambda}(t)|_{H^{-1}(\lambda)}^p\leq \frac{c_{\eta,p}}{\mu^{p(1-\bar{\zeta})/2}}\mathbb{E}\|\Phi\|_t^p.
    \end{equation}
\end{enumerate}
In all the above claims, the constant $C_{\eta,p}$ does not depend on the choice of $\lambda>0$ and the choice of $\mu\in(0,\mu_0(\lambda)]$.  

\end{proposition}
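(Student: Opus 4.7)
The plan is to refine the proof of Proposition \ref{proposition1.10!} so that every numerical constant becomes independent of $\mu$ and $\lambda$ in the regime $\mu\in(0,\mu_0(\lambda)]$; following Proposition \ref{proposition 4.8extra}, I would take $\mu_0(\lambda)=1/(40\lambda)$ so that $\mu\lambda$ is uniformly small. Part (1) is a verbatim restatement of Proposition \ref{proposition1.10!}(1), whose proof already gives a constant $c_{\eta,p}$ free of $\mu$ and $\lambda$; no new work is needed.

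For part (2), the key observation is the linearity of \eqref{effkmu} in the initial datum, $\mu\,f_\mu^\lambda(k)'(t,0,1/\mu)=f_\mu^\lambda(k)'(t,0,1)$, together with \eqref{zeta2est2} which (for $k\leq N_\mu^\lambda$) gives $|f_\mu^\lambda(k)'(t,0,1)|\leq 2e^{-(\alpha_k+\lambda)t}$, uniformly in $\mu$. Starting from the quadratic variation formula of $\Lambda_1(t)$ in the proof of Proposition \ref{proposition1.10!}, this yields
\begin{equation*}
\mu^2\Lambda_1(t)\leq M^2\int_0^t(t-s)^{-2\alpha}\|\Phi\|_s^2\sum_{k=1}^{N_\mu^\lambda}|f_\mu^\lambda(k)'(t-s,0,1)|^2\,ds,
\end{equation*}
with a $\mu$-independent integrable majorant: the Young-type reasoning behind \eqref{J1J1} applied to the tail sum $\sum_k e^{-2(\alpha_k+\lambda)\tau}$, combined with $(t-s)^{-2\alpha}$, gives a dominator of the form $C(t-s)^{-r-2\alpha}$ (for $r<1$ close to $1$ and $\alpha$ small enough that $r+2\alpha<1$). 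Now \eqref{uniformfantastic} asserts $f_\mu^\lambda(k)'(\tau,0,1)\to 0$ pointwise in $k$ and $\tau>0$ as $\mu\to 0$, and $N_\mu^\lambda\to\infty$ so every fixed $k$ is eventually included in the truncation; dominated convergence on the joint $(s,k)$-measure delivers $\mu^2\Lambda_1(t)\to 0$ in $L^{p/2}(\Omega)$, which after BDG is exactly \eqref{uniformgreats}.

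For part (3), I would replace \eqref{zeta2est2} by \eqref{zeta4est4} in the high-mode computation of Proposition \ref{proposition1.10!}(2) and exploit the sharper spectral asymptotics of $\mathbf{H}1'$. The bound \eqref{zeta4est4} with $v=1/\mu$ gives $|f_\mu^\lambda(k)'(t,0,1/\mu)|^2\leq(4/\mu^2)e^{-t/(2\mu)}$ for $k>N_\mu^\lambda$, so
\begin{equation*}
\Lambda_2(t)\leq\frac{4\|\Phi\|_t^2}{\mu^2}\Big(\sum_{k>N_\mu^\lambda}(\alpha_k+\lambda)^{-1}\Big)\int_0^t(t-s)^{-2\alpha}e^{-(t-s)/(2\mu)}\,ds.
\end{equation*}
The time integral contributes $C\mu^{1-2\alpha}$ via \eqref{elementaryintegral}. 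For the tail sum, $\mathbf{H}1'$ yields $N_\mu^\lambda\asymp\mu^{-(1-\eta_0)}$ when $\mu\in(0,\mu_0(\lambda)]$, and the comparison $\alpha_k\asymp k^{1/(1-\eta_0)}$ integrates to $\sum_{k>N_\mu^\lambda}\alpha_k^{-1}\leq C\mu^{\eta_0}$, exactly as in \eqref{whenmuislarge}. Combining the three factors gives $\Lambda_2(t)\leq C\mu^{-(1-\bar\zeta)}\|\Phi\|_t^2$ with $\bar\zeta:=\eta_0-2\alpha>0$ whenever $\alpha<\eta_0/2$, and a final BDG step yields \eqref{prop4.33}.

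The step I expect to be most delicate is part (2): pointwise vanishing of $f_\mu^\lambda(k)'(\cdot,0,1)$ is straightforward from \eqref{uniformfantastic}, but the truncation level $N_\mu^\lambda$ changes with $\mu$ and the naive dominator $\sum_k e^{-2(\alpha_k+\lambda)(t-s)}$ blows up as $s\uparrow t$. Securing an integrable, $\mu$-free majorant requires the joint use of the Young-type trick of \eqref{J1J1}, the factorization weight $(t-s)^{-2\alpha}$, and the freedom to take $\alpha$ small; only then does dominated convergence actually apply.
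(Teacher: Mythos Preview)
Your proposal is correct and follows essentially the same route as the paper's proof: part (1) is indeed a repetition of Proposition \ref{proposition1.10!}(1) with the $\mu$-dependence already explicit; part (2) uses the linearity $\mu f_\mu^\lambda(k)'(t,0,1/\mu)=f_\mu^\lambda(k)'(t,0,1)$, the pointwise limit \eqref{uniformfantastic}, the uniform bound \eqref{zeta2est2}, and dominated convergence; part (3) combines \eqref{zeta4est4}, the tail estimate \eqref{whenmuislarge} from $\mathbf{H}1'$, and \eqref{elementaryintegral} to produce $\bar\zeta=\eta_0-2\alpha$. Your treatment of the dominated convergence step in (2) is in fact slightly more careful than the paper's, as you make explicit the $\mu$-free integrable majorant via the $J_1$-type bound $\sum_k e^{-2(\alpha_k+\lambda)\tau}\leq C\tau^{-r}$ needed to control the singularity at $s\uparrow t$.
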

\begin{proof}
    Denote by $\Lambda_1(t)$ the quadratic variation of $P_{N_\mu^\lambda}\Pi_2 \Gamma_\alpha^{\mu,\lambda}$. We have
    $$\begin{aligned}
    \Lambda_1(t)&=\sum_{j=1}^\infty\int_0^t (t-s)^{-2\alpha}|P_{N_\mu^\lambda}\Pi_2\mathcal{S}_\mu^\lambda(t-s)\mathcal{I}_\mu\Phi(s)e_j|_H^2ds
    \\&=\sum_{k=1}^{N_\mu}\sum_{j=1}^\infty\int_0^t (t-s)^{-2\alpha}\langle \Phi(s)e_j,\mathcal{I}_\mu^*\mathcal{S}_\mu^{\lambda *}(t-s)\Pi_2^*e_k\rangle_H^2 ds \end{aligned} $$

Since $\mathcal{I}_\mu^*\mathcal{S}_\mu^{\lambda*}(t-s)\Pi_2^*e_k=f_\mu^\lambda(k)'(t-s,0,1/\mu)e_k$, we can bound 
\begin{equation}\Lambda_1(t)\leq\sum_{k=1}^{N_\mu^\lambda}\sum_{j=1}^\infty \int_0^t (t-s)^{-2\alpha}|(f_\mu^\lambda(k)'(t-s,0,1/\mu)|^2\langle\Phi(s)e_j,e_k\rangle_H^2 ds.
\end{equation}
Using the estimate
$$|f_\mu^\lambda(k)'(t,0,1/\mu)|\leq\frac{2e^{-(\alpha_k+\lambda) t}}{\mu},$$ we deduce that, following \eqref{J1J1} and the treatment of $J_1(t)$ in the proof of Proposition \ref{proposition 1.8},
$$\begin{aligned}\Lambda_1(t)&\leq \frac{C}{\mu^2}\sum_{k=1}^{N_\mu^\lambda}\int_0^t (t-s)^{-2\alpha}e^{-2(\alpha_k+\lambda)(t-s)}\|\Phi\|_s^2ds\\&\leq\frac{C}{\mu^2}\lambda^{-\eta}\|\Phi\|_t^2
.\end{aligned}$$ The proof of \eqref{uniform1.101.35} follows from application of BDG inequality:
$$\mathbb{E}|P_{N_\mu^\lambda}\Pi_1\Gamma_\alpha^{\mu,\lambda}(t)|_H^p\leq \mathbb{E}(\Lambda_1(t))^{p/2}.$$

For the second claim, since $\mu f_\mu^\lambda(k)'(t,0,1/\mu)=f_\mu^\lambda(k)'(t,0,1)$, using the estimate \eqref{uniformfantastic}, we have for each $s>0$ and $k\leq N_\mu^\lambda,$ $j\in\mathbb{N}$,
$$\lim_{\mu\to 0}(t-s)^{-2\alpha}\mu^2|f_\mu^\lambda(k)'(t,0,1/\mu)|^2\langle\Phi(s)e_j,e_k\rangle_H^2=0$$
and also the bound thanks to \eqref{zeta2est2}: as $k\leq N_\mu^\lambda$, 
$$\mu^2 (f_\mu^\lambda(k)'(t,0,1/\mu))^2\langle \Phi(s)e_j,e_k\rangle_H^2\leq Ce^{-2(\alpha_k+\lambda) t}\langle \Phi(s)e_j,e_k\rangle_H^2,$$
then by the dominated convergence theorem, $\Lambda_1(t)\to 0$ with probability one as $\mu\to 0$, and then \eqref{uniformgreats} follows from BDG inequality.

For the third claim, 
$$\begin{aligned}
\Lambda_2(t)=&\sum_{j=1}^\infty \int_0^t (t-s)^{-2\alpha}|(I-P_{N_\mu^\lambda})\Pi_2\mathcal{S}_\mu^\lambda(t-s)\mathcal{I}_\mu \Phi(s)e_j|_{H^{-1}(\lambda)}^2 ds\\
&=\sum_{j=1}^\infty\int_0^t (t-s)^{-2\alpha}|(A-\lambda)^{-1/2}(I-P_{N_\mu^\lambda})\Pi_2\mathcal{S}_\mu^\lambda(t-s)\mathcal{I}_\mu \Phi(s)e_j|_H^2ds\\
&\leq\sum_{k=N_\mu^\lambda+1}^\infty\sum_{j=1}^\infty \int_0^t (t-s)^{-2\alpha}\langle \Phi(s)e_j,\mathcal{I}_\mu^*\mathcal{S}_\mu^{\lambda*}(t-s)\Pi_2^*(I-P_{N_\mu^\lambda})^*(A-\lambda)^{-1/2}e_k\rangle_H^2 ds
\end{aligned}
$$
Since for $k,j\in\mathbb{N}$,
$$\begin{aligned}\langle \mathcal{I}_\mu^*\mathcal{S}_\mu^{\lambda*}(t-s)\Pi_2^*(I-P_{N_\mu^\lambda})^*(A-\lambda)^{-1/2}e_k,e_j\rangle_H\\
=\begin{cases}
    -(\alpha_k+\lambda)^{-1/2}f_\mu^\lambda(k)'(t-s,0,1/\mu)\quad \text{if } k=j>N_\mu^\lambda,\\0\quad \text{otherwise}.
\end{cases} \end{aligned}$$

By estimate \eqref{zeta4est4},
$$(\alpha_k+\lambda)^{-1/2}|f_\mu^\lambda(k)'(t-s,0,1/\mu)|\leq C(\alpha_k+\lambda)^{-1/2}\mu^{-1}e^{-\frac{t-s}{4\mu}},$$
we have, for all $\mu\in(0,\mu_0(\lambda)]$ with $\mu_0(\lambda)$ as defined in Proposition \ref{proposition 4.8extra},
$$\sum_{k=N_\mu^\lambda+1}^\infty(\alpha_k+\lambda)^{-1}|(f_\mu^\lambda(k)'(t-s,0,1/\mu))|^2
\leq\sum_{k=N_\mu^\lambda+1}^\infty \frac{Ce^{-\frac{t-s}{2\mu}}}{\mu^2(\alpha_k+\lambda)}\leq Ce^{-\frac{t-s}{2\mu}}\mu^{\eta_0-2},$$
where the last inequality is derived in exactly the same way as in \eqref{whenmuislarge}.
Thus we have the bound for $\Lambda_2(t)$:
$$\Lambda_2(t)\leq\frac{C}{\mu^{2-\eta_0}}\int_0^t (t-s)^{-2\alpha}e^{-\frac{t-s}{2\mu}}\|\Phi\|_s^2 ds.$$
Using \eqref{elementaryintegral}, we have 
$$\Lambda_2(t)\leq \frac{C}{\mu^{2\alpha+1-\eta_0}}\|\Phi\|_t^2 .$$
Now it suffices to take $\alpha>0$ small enough so that $2\alpha+1-\eta_0<1$. Then we can find $\bar{\zeta}>0$ such that  
$$\Lambda_2(t)\leq\frac{C}{\mu^{1-\bar{\zeta}}}\|\Phi\|_t^2.$$ The last claim then follows from BDG inequality.
\end{proof}

Now we adapt the proof of Theorem \ref{theorem2.11} to get $\mu$-independent estimates for small $\mu$.

\begin{theorem}\label{theorem2.11ext} Fix $T>0$.
Recall $\Gamma^{\mu,\lambda}$ defined in \eqref{Gammamulambda}. Then for any $p\geq 2$, any fixed $\lambda>0$ and any $\eta\in(0,\eta_0)$, we may find some $c_{\eta,p}>0$, some $\bar{\zeta}>0$ and some  $\mu_0(\lambda)>0$ such that for all $\mu\in(0,\mu_0(\lambda)],$ we have the estimate
\begin{equation}\label{1.38lasttheoremext}
\mathbb{E}\sup_{t\in[0,T]}|\Pi_1\Gamma^{\mu,\lambda}(t)|_H^p\leq c_{\eta,p}(\lambda^{-\frac{\eta p}{2}}+\mu^{\bar{\zeta}p/2})\mathbb{E}\|\Phi\|_T^p .
\end{equation}
The constant $c_{\eta,p}$ is independent of the choice of $\lambda>0$ and $\mu\in(0,\mu_0(\lambda)]$. We again note that for fixed $\lambda>0$, as long as $\mu_0(\lambda)$ is chosen small enough, the term $\mu^{\bar{\xi}p/2}$ can be discarded and the right hand side is indeed $\mu$-independent.

We also recall the following $\lambda=0$ version, which will be used in the subsequent proof:
\begin{equation}\label{salins}
    \mathbb{E}\sup_{t\in[0,T]}|\Pi_1\Gamma^\mu(t)|_H^p\leq C\mathbb{E}\int_0^T \sup_{s\in[0,t]}\|\Phi(s)\|_{\mathcal{L}(H_0,H)}^p dt,
\end{equation}
where $C$ is $\mu$-independent.
\end{theorem}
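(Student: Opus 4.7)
The plan is to follow verbatim the three-term decomposition used in the proof of Theorem \ref{theorem2.11}, but substituting the newly established $\mu$-independent maximal inequalities from Proposition \ref{proposition 4.8extra} and Proposition \ref{uniformproposition1.10} in place of the $\mu$-dependent bounds from Proposition \ref{proposition 1.8} and Proposition \ref{proposition1.10!}. Starting from the factorization identity \eqref{factorization}, I would write $\Gamma^{\mu,\lambda}(t)$ as the sum of three convolution integrals against $\Pi_1\Gamma_\alpha^{\mu,\lambda}(s)$, against the low-mode part $P_{N_\mu^\lambda}\Pi_2\Gamma_\alpha^{\mu,\lambda}(s)$, and against the high-mode part $(I-P_{N_\mu^\lambda})\Pi_2\Gamma_\alpha^{\mu,\lambda}(s)$.

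For each integral, after pulling the operator norm outside and applying Jensen's inequality in the $s$-variable as in \eqref{firstexpanse}, the $L^p$-norm splits into the product of a deterministic bound on $\|\Pi_1\mathcal{S}_\mu^\lambda(\cdot)\|$ restricted to the relevant subspace and a bound on the $p$-th moment of the corresponding factorization process. The crucial bookkeeping is the following:
\begin{enumerate}
\item Lemma \ref{lemma 1.4} giving $\|\Pi_1\mathcal{S}_\mu^\lambda(\cdot)(I,0)^T\|_{\mathcal{L}(H)}\leq 1$, paired with Proposition \ref{proposition 4.8extra} which yields $c_{\eta,p}\,\mathbb{E}\|\Phi\|_t^p(\lambda^{-\eta p/2}+\mu^{(\eta_0-2\alpha)p/2})$; choosing $\alpha>0$ small and $\mu\leq \mu_0(\lambda)$ small enough this reduces to $c_{\eta,p}\lambda^{-\eta p/2}\mathbb{E}\|\Phi\|_t^p$.
\item Lemma \ref{lemma1.5!} giving the factor $4\mu$, paired with part (1) of Proposition \ref{uniformproposition1.10} which gives $c_{\eta,p}\mu^{-p}\lambda^{-\eta p/2}$; the $\mu^p$ and $\mu^{-p}$ factors cancel to produce $c_{\eta,p}\lambda^{-\eta p/2}$.
\item Lemma \ref{lemma1.7!} giving the factor $\sqrt{4\mu}$, paired with part (3) of Proposition \ref{uniformproposition1.10} which gives $c_{\eta,p}\mu^{-p(1-\bar\zeta)/2}$; the product yields the residual $c_{\eta,p}\mu^{p\bar\zeta/2}$.
\end{enumerate}
Summing the three contributions produces $c_{\eta,p}(\lambda^{-\eta p/2}+\mu^{\bar\zeta p/2})\mathbb{E}\|\Phi\|_T^p$, with a constant $c_{\eta,p}$ that absorbs the integral $\int_0^T s^{\alpha-1}ds$ (finite for $\alpha>0$) and does not depend on $\lambda$ or $\mu\leq\mu_0(\lambda)$.

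The main obstacle is purely one of bookkeeping: one must check that the cancellations between the operator-norm bounds on $\Pi_1\mathcal{S}_\mu^\lambda$ and the $\mu^{-p}$ and $\mu^{-p(1-\bar\zeta)/2}$ factors appearing in the factorization estimates actually produce constants that are uniform in $\mu\in(0,\mu_0(\lambda)]$, which in turn forces the preliminary choice of $\alpha>0$ small enough that both the time singularity $(t-s)^{\alpha-1}$ is integrable and the exponent $2\alpha+1-\eta_0<1$ required in the proof of Proposition \ref{uniformproposition1.10}(3) holds. The passage from pointwise-in-$t$ estimates to $\mathbb{E}\sup_{t\in[0,T]}$ is automatic thanks to Jensen after \eqref{firstexpanse}, since the supremum is pulled inside the $s$-integral. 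Finally, the $\lambda=0$ estimate \eqref{salins} is the classical bound proved in \cite{salins2019smoluchowski}, Lemma~6.1, and can be recovered from the same decomposition using \eqref{zeta2est2}, \eqref{zeta4est4} and summability of $\alpha_k^{-(1-\eta)}$, with the $\mu$-independence following from the same cancellations as above without needing the $\lambda$-shifted norm.
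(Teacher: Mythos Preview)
Your proposal is correct and follows essentially the same approach as the paper: apply the three-term factorization \eqref{factorization}, then pair Lemma~\ref{lemma 1.4} with Proposition~\ref{proposition 4.8extra} for the first term, \eqref{lemma2.51} with Proposition~\ref{uniformproposition1.10}(1) for the low-mode term, and the $H^{-1}(\lambda)$ bound from Lemma~\ref{lemma1.7!} with Proposition~\ref{uniformproposition1.10}(3) for the high-mode term, so that the $\mu$-powers cancel exactly as you describe. The only trivial discrepancy is that the $\lambda=0$ statement \eqref{salins} is quoted in the paper from \cite{salins2019smoluchowski}, Theorem~6.3 rather than Lemma~6.1.
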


\begin{proof} Proof of claim \eqref{salins} can be found in \cite{salins2019smoluchowski}, Theorem 6.3. We will focus on the $\lambda>0$ case to get decay in $\lambda$:

    We apply the stochastic factorization as in \eqref{factorization} and treat each term separately.
For the first term
$$\mathbb{E}\sup_{t\in[0,T]}\left|\int_0^t (t-s)^{\alpha-1}\Pi_1\mathcal{S}_\mu^\lambda(t-s)\begin{pmatrix} I\\0\end{pmatrix} \Pi_1\Gamma_\alpha^{\mu,\lambda}(s)ds\right|_H^p$$
we use Lemma \ref{lemma 1.4} and Proposition \ref{proposition 4.8extra} instead of Proposition \ref{proposition 1.8} so that the numerical constant $c_{\eta,p}$ is $\mu$-independent. The difference compared with \eqref{firstexpanse} is that we have $\lambda^{-\frac{\eta p}{2}}+\mu^{(\eta_0-2\alpha)p/2}$ instead of $\lambda^{-\frac{\eta p}{2}}$ on the right hand side of the estimate.

For the second term
$$\mathbb{E}\sup_{t\in[0,T]}\left|\int_0^t (t-s)^{\alpha-1}\Pi_1\mathcal{S}_\mu^\lambda(t-s)\begin{pmatrix} 0\\P_{N_\mu^\lambda}\end{pmatrix} P_{N_\mu^\lambda}\Pi_2\Gamma_\alpha^{\mu,\lambda}(s)ds\right|_H^p
,$$
we use estimate \eqref{lemma2.51} and estimate \eqref{uniform1.101.35}. We have $\lambda^{-\frac{\eta p}{2}}$ on the right hand side and the numerical constant $c_{\eta,p}$ is $\mu$-independent.

For the third term
$$\mathbb{E}\sup_{t\in[0,T]}\left|\int_0^t (t-s)^{\alpha-1}\Pi_1\mathcal{S}_\mu^\lambda(t-s)\begin{pmatrix} 0\\I-P_{N_\mu^\lambda}\end{pmatrix} (I-P_{N_\mu^\lambda})\Pi_2\Gamma_\alpha^{\mu,\lambda}(s)ds\right|_H^p,$$
we use estimate \eqref{lemma2.52} and estimate \eqref{prop4.33}. We have $\mu^{\bar{\zeta}p/2}$ on the right hand side of the estimate and the numerical constant $c_{\eta,p}$ is $\mu$-independent.

Summing up the three estimates, the proof of the theorem is finished.
\end{proof}

\subsection{Small mass limit of stochastic integrals} We will need two more convergence results before we start the proof of small mass limit.

For the first result, recall the definition (for the stochastic heat equation)
\begin{equation}
    \Gamma(t)=\int_0^t \mathcal{S}(t-s)\Phi(s)dW_s
\end{equation}
where $\mathcal{S}$ is the analytic semigroup generated by $A$. Recall also the definition \eqref{uppergamma}
\begin{equation}
\Gamma^{\mu}(t)=\int_0^t\mathcal{S}_\mu(t-s)\Phi(s)dW_s.
\end{equation}
We need the following result on the convergence of $\Pi_1 \Gamma^{\mu}$ to $\Gamma$:
\begin{theorem}\label{theorem4.5}
    Given $T>0$ and $\alpha>0$, for any self-adjoint, progressively measurable $$\Phi\in L^p(\Omega;L^\infty([0,T];\mathcal{L}(H_0,H))),$$ any fixed $\lambda>0$ and any $p\geq 2$, let $\Gamma^{\mu}$ and $\Gamma$ be defined as before. Then 
    \begin{equation}\label{finalconv}
        \lim_{\mu\to 0}\mathbb{E}|\Pi_1\Gamma^{\mu}-\Gamma|^p_{\mathcal{C}([0,T];H)}=0.
    \end{equation}
\end{theorem}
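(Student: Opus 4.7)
The plan is to prove convergence modewise along the eigenbasis $\{e_k\}_{k\geq 1}$ of $A$, combining uniform-in-$\mu$ tail estimates inherited from Theorem \ref{theorem2.11ext} with the pointwise convergence $f_\mu(k)(r,0,1/\mu) \to e^{-\alpha_k r}$ supplied by the preceding lemma. The structural observation is that $\Pi_1 \mathcal{S}_\mu(t) \mathcal{I}_\mu$ acts diagonally on $\{e_k\}$ as multiplication by $g_\mu(k)(t):=f_\mu(k)(t,0,1/\mu)$, whereas $\mathcal{S}(t)$ acts by multiplication by $e^{-\alpha_k t}$. Writing $h_\mu(k)(r):=g_\mu(k)(r)-e^{-\alpha_k r}$, the difference admits the spectral representation
\begin{equation*}
    \Pi_1 \Gamma^\mu(t)-\Gamma(t) = \sum_k e_k \sum_j \int_0^t h_\mu(k)(t-s)\,\langle \Phi(s)e_j,e_k\rangle\, d\beta^j_s,
\end{equation*}
and once $\mu$ is small enough that $k\leq N_\mu^0$, estimate \eqref{zeta1est1} gives the $\mu$-independent dominant $|g_\mu(k)(r)|\leq 4 e^{-\alpha_k r}$, hence $|h_\mu(k)(r)|\leq 5 e^{-\alpha_k r}$.

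The proof then splits into two steps. First, tail control uniform in $\mu$: applying the mass-independent estimate \eqref{salins} to the truncated coefficient $(I-P_K)\Phi$, where $P_K$ is the orthogonal projection onto $\operatorname{span}\{e_1,\dots,e_K\}$, together with the heat-semigroup analogue of \eqref{salins} (which is proved by the same factorization argument as in Theorem \ref{theorem2.11ext} with $\mathcal{S}$ in place of $\mathcal{S}_\mu$), and using the summability $\sum_k \alpha_k^{-(1-\eta)}<\infty$ from $\mathbf{H}1$, one obtains
\begin{equation*}
    \lim_{K\to\infty} \sup_{\mu\in(0,1]} \mathbb{E}\sup_{t\in[0,T]} |(I-P_K)(\Pi_1 \Gamma^\mu(t)-\Gamma(t))|_H^p = 0.
\end{equation*}
Second, finite-mode convergence: for each $k\leq K$ and $\mu$ small, $g_\mu(k)(r)$ is a linear combination of two real decaying exponentials arising from the real roots of $\mu x^2+x+\alpha_k=0$, so the scalar stochastic convolution $\langle \Pi_1 \Gamma^\mu(t),e_k\rangle$ is factorizable, and likewise $\langle\Gamma(t),e_k\rangle$. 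Applying the factorization formula and BDG to their difference yields
\begin{equation*}
    \mathbb{E}\sup_{t\in[0,T]}|\langle \Pi_1\Gamma^\mu(t)-\Gamma(t),e_k\rangle|^p \leq C\, \mathbb{E}\int_0^T\!\Bigl(\!\int_0^t (t-s)^{-2\alpha}|h_\mu(k)(t-s)|^2|\Phi(s)e_k|_H^2\,ds\Bigr)^{p/2}\!dt,
\end{equation*}
and the uniform bound $|h_\mu(k)(r)|^2\leq 25 e^{-2\alpha_k r}$ combined with $|\Phi(s)e_k|_H\leq M|\Phi(s)|_{\mathcal{L}(H_0,H)}$ supplies an $L^1$ dominant that is independent of $\mu$. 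Two nested applications of dominated convergence, using the pointwise convergence $h_\mu(k)(r)\to 0$, give $\mathbb{E}\sup_t|P_K(\Pi_1\Gamma^\mu(t)-\Gamma(t))|_H^p\to 0$.

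Combining the two steps via triangle inequality establishes \eqref{finalconv}. The main obstacle is the finite-mode sup-in-$t$ step: one needs a maximal inequality amenable to dominated convergence rather than merely fixed-$t$ convergence. This is overcome by the factorization formula, which converts sup-in-$t$ control into a Lebesgue integral of a factorized inner stochastic integral, after which the $\mu$-uniform bound on $h_\mu(k)$ drives dominated convergence. A secondary technicality is verifying the heat-semigroup analogue of \eqref{salins} with the same eigenvalue-tail summability, but this is essentially a rerun of Theorem \ref{theorem2.11ext} in which the operator bounds of Lemma \ref{lemma2.3}--\ref{lemma 1.4} are replaced by the obvious bounds $|e^{-\alpha_k r}|\leq 1$ and the decay estimate $|e^{-\alpha_k r}|\leq C_\eta (\alpha_k r)^{-\eta/2}$.
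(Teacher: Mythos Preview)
The paper does not give its own proof of this theorem; it cites \cite{salins2019smoluchowski}, Theorem~8.1, and only notes that Cauchy--Schwarz lowers the integrability requirement on $p$. Your two-step architecture (uniform-in-$\mu$ tail control via eigenvalue summability, then finite-mode dominated convergence after factorization) is exactly the strategy of that reference, so in spirit you have reproduced the intended argument.

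Two points need correcting in the execution. First, in Step~1 you cannot simply apply \eqref{salins} to $(I-P_K)\Phi$: the right-hand side of \eqref{salins} is $\|(I-P_K)\Phi(s)\|_{\mathcal{L}(H_0,H)}$, which has no reason to vanish as $K\to\infty$ for general $\Phi$. What you must do instead is rerun the quadratic-variation calculation of Propositions~\ref{proposition 1.8} and~\ref{proposition 4.8extra}, keeping only the modes $k>K$ in the sum $\sum_k f_\mu(k)^2$, and then invoke summability of $\sum_{k>K}\alpha_k^{-(1-\eta)}$; your parenthetical remark about summability suggests you see this, but the statement as written is wrong.

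Second, the displayed inequality in Step~2 is not how factorization works for $h_\mu(k)$: the difference $h_\mu(k)=g_\mu(k)-e^{-\alpha_k\cdot}$ does not satisfy a semigroup identity, so there is no factorized inner integral of the form $\int_0^t(t-s)^{-\alpha}h_\mu(k)(t-s)\,\psi_k(s)\,dW_s$ that reconstructs the convolution with $h_\mu(k)$. What does work is your earlier remark: write $g_\mu(k)(r)=c_1 e^{-a_1 r}+c_2 e^{-a_2 r}$ with $a_1\to\alpha_k$, $a_2\sim 1/\mu$, $c_1\to 1$, $c_2\to -1$, factorize each exponential piece separately, and show $c_1Y_1-Y_3\to 0$ (rates merge) and $c_2Y_2\to 0$ (fast rate kills the factorized inner quadratic variation: $\int_0^t r^{-2\alpha}e^{-2a_2 r}dr\leq C a_2^{2\alpha-1}\to 0$). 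Alternatively, one may factorize $\Gamma^\mu$ and $\Gamma$ via their respective full semigroups $\mathcal{S}_\mu$ and $\mathcal{S}$ and compare the pieces using the lemma preceding the theorem. Either route closes the argument; only the displayed intermediate bound is incorrect.
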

This theorem is proved in \cite{salins2019smoluchowski}, Theorem 8.1. There was a $p\geq\frac{1}{\alpha}$ assumption in that result, but we can apply Cauchy-Schwartz to \eqref{finalconv} to reduce the value of $p$.

For Lebesgue integrals, we have
\begin{theorem}\label{theorem4.6} For any $T>0$ and $\varphi\in L^\infty([0,T];H),$
    \begin{equation}
        \lim_{\mu\to 0}\sup_{t\in[0,T]}\left|\int_0^t\left(\mathcal{S}(t-s)-\Pi_1\mathcal{S}_\mu(t-s)\mathcal{I}_\mu\right)\varphi(s)ds\right|_H=0.
    \end{equation}
\end{theorem}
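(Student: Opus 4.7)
The plan is to reduce the statement to a single-eigenmode computation and then extend by linearity and density. First I would record that both operators act diagonally in the eigenbasis: $\mathcal{S}(t)e_k = e^{-\alpha_k t} e_k$ and $\Pi_1\mathcal{S}_\mu(t)\mathcal{I}_\mu e_k = f_\mu(k)(t,0,1/\mu) e_k$ (in the $\zeta=1$ case we are considering). Setting $g_\mu^k(r) := f_\mu(k)(r,0,1/\mu) - e^{-\alpha_k r}$, the convergence lemma quoted immediately before Theorem \ref{theorem4.5} (second bullet, with $\lambda=0$, $v=1$) gives $g_\mu^k(r) \to 0$ uniformly on every $[t_0,T]$ with $t_0>0$, for each fixed $k$. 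Together with the uniform-in-$\mu$ bound $\|\Pi_1\mathcal{S}_\mu(t)\mathcal{I}_\mu\|_{\mathcal{L}(H)}\leq 4$ from Lemma \ref{lemma2.3} and $|e^{-\alpha_k r}|\leq 1$, we obtain pointwise-a.e.-in-$r$ convergence together with the uniform dominant $|g_\mu^k(r)|\leq 5$.

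For the one-mode test function $\varphi(s)=\psi(s)e_k$ with $\psi\in L^\infty([0,T])$, after the substitution $r=t-s$ the difference of convolutions reduces to
\begin{equation*}
\sup_{t\in[0,T]}\left|\int_0^t \psi(s)\,g_\mu^k(t-s)\,ds\right|\leq \|\psi\|_{\infty}\int_0^T |g_\mu^k(r)|\,dr,
\end{equation*}
and the right-hand side tends to $0$ as $\mu\to 0$ by the dominated convergence theorem. Linearity extends this to every finite-mode input $\varphi_N(s)=\sum_{k=1}^N \psi_k(s)e_k$.

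For general $\varphi\in L^\infty([0,T];H)$, I would approximate by the spectral truncation $\varphi_N(s):=P_N\varphi(s)=\sum_{k=1}^N\langle\varphi(s),e_k\rangle e_k$. Since $\{e_k\}$ is an orthonormal basis of $H$, $|\varphi(s)-\varphi_N(s)|_H\to 0$ pointwise in $s$ and is dominated by $2\|\varphi\|_{L^\infty([0,T];H)}$, so $\int_0^T|\varphi(s)-\varphi_N(s)|_H\,ds\to 0$ as $N\to\infty$ by dominated convergence. Using the uniform operator-norm bound $\|\mathcal{S}(t-s)-\Pi_1\mathcal{S}_\mu(t-s)\mathcal{I}_\mu\|_{\mathcal{L}(H)}\leq 5$ (independent of $\mu,t,s$), the tail contribution satisfies
\begin{equation*}
\sup_{t\in[0,T]}\left|\int_0^t\left(\mathcal{S}(t-s)-\Pi_1\mathcal{S}_\mu(t-s)\mathcal{I}_\mu\right)(\varphi-\varphi_N)(s)\,ds\right|_H\leq 5\int_0^T|\varphi(s)-\varphi_N(s)|_H\,ds.
\end{equation*}
Given $\varepsilon>0$, I would first fix $N$ large enough to make this tail smaller than $\varepsilon/2$, then apply the finite-mode convergence to $\varphi_N$ and let $\mu\to 0$.

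I do not anticipate any real obstacle: all needed inputs (mode-wise pointwise convergence, uniform operator norm bound, completeness of the eigenbasis) are already in the excerpt. The only subtlety worth highlighting is that we need the convergence to be \emph{uniform} in $t\in[0,T]$, not merely pointwise in $t$; the change of variable in the one-mode step bounds the supremum over $t$ by $\|\psi\|_\infty\int_0^T|g_\mu^k(r)|\,dr$, which removes the $t$-dependence at no extra cost.
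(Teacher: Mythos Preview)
Your argument is correct. The paper does not give its own proof of this statement but simply cites \cite{salins2019smoluchowski}, Theorem 8.2; your mode-wise dominated convergence followed by spectral truncation using the uniform bound $\|\Pi_1\mathcal{S}_\mu(t)\mathcal{I}_\mu\|_{\mathcal{L}(H)}\leq 4$ is the natural route and is presumably close to what the cited reference does.
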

The proof can be found in \cite{salins2019smoluchowski}, Theorem 8.2.

We also need the following more quantitative estimate in the exponent of $\lambda$ as $\lambda\to +\infty$.

\begin{theorem}\label{theorem4.7hhh}
    For any $T>0,$ $\lambda>0$ fixed there exists some $c_1(\lambda)>0$ such that for any $\mu\in(0,c_1(\lambda)]\cap(0,1),$ we have
\begin{equation}
    \sup_{t\in[0,T]}\left|\int_0^t \Pi_1\mathcal{S}_\mu^\lambda(t-s)\mathcal{I}_\mu\varphi(s)ds\right|_H\leq C\lambda^{-1}\sup_{t\in[0,T]}|\varphi(t)|_H,
\end{equation}
    where $C$ is some universal constant.
\end{theorem}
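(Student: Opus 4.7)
The plan is to exploit the diagonal structure of $\Pi_1\mathcal{S}_\mu^\lambda(r)\mathcal{I}_\mu$ in the eigenbasis $\{e_k\}$ and then to produce a uniform-in-$k$ exponential decay bound with rate $\lambda$, so that the deterministic convolution can be controlled by pulling the norm inside the time integral.

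First I would note that, by construction, $\Pi_1\mathcal{S}_\mu^\lambda(r)\mathcal{I}_\mu e_k = f_\mu^\lambda(k)(r,0,1/\mu)\,e_k$, so this operator is diagonal in $\{e_k\}$ and hence
\[
\|\Pi_1\mathcal{S}_\mu^\lambda(r)\mathcal{I}_\mu\|_{\mathcal{L}(H)} = \sup_{k\geq 1}|f_\mu^\lambda(k)(r,0,1/\mu)|.
\]
Next I would split the supremum according to the partition $k\leq N_\mu^\lambda$ versus $k>N_\mu^\lambda$. For $k\leq N_\mu^\lambda$, estimate \eqref{zeta1est1} (applied with $v=1/\mu$) gives
\[
|f_\mu^\lambda(k)(r,0,1/\mu)|\leq 4e^{-(\alpha_k+\lambda)r}\leq 4e^{-\lambda r}.
\]
For $k> N_\mu^\lambda$, estimate \eqref{zeta3est3} with $v=1/\mu$ yields
\[
|f_\mu^\lambda(k)(r,0,1/\mu)|\leq \frac{2}{\sqrt{\mu(\alpha_k+\lambda)}}\,e^{-r/(4\mu)}\leq 4 e^{-r/(4\mu)},
\]
where in the second inequality I used that $k>N_\mu^\lambda$ forces $\alpha_k+\lambda>1/(4\mu)$, so the prefactor $2/\sqrt{\mu(\alpha_k+\lambda)}$ is bounded by $4$.

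The key step is the choice $c_1(\lambda):=1/(4\lambda)$. For any $\mu\in(0,c_1(\lambda)]$ one has $1/(4\mu)\geq\lambda$, hence $e^{-r/(4\mu)}\leq e^{-\lambda r}$, and combining the two regimes we obtain the $k$-uniform bound
\[
\|\Pi_1\mathcal{S}_\mu^\lambda(r)\mathcal{I}_\mu\|_{\mathcal{L}(H)}\leq 4e^{-\lambda r}, \qquad r\geq 0.
\]
Finally, by the triangle inequality for Bochner integrals,
\[
\Bigl|\int_0^t \Pi_1\mathcal{S}_\mu^\lambda(t-s)\mathcal{I}_\mu\varphi(s)\,ds\Bigr|_H
\leq \sup_{s\in[0,T]}|\varphi(s)|_H \int_0^t 4e^{-\lambda(t-s)}\,ds \leq \frac{4}{\lambda}\sup_{s\in[0,T]}|\varphi(s)|_H,
\]
which holds uniformly in $t\in[0,T]$ and gives the claim with $C=4$. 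There is no real obstacle here; the only subtle point is recognising that the requirement $\mu\leq c_1(\lambda)$ is precisely what is needed to make the under-damped modes $k>N_\mu^\lambda$ inherit the same exponential decay rate $\lambda$ enjoyed by the over-damped modes, after which the bound reduces to a one-line convolution estimate.
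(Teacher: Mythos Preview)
Your proof is correct and follows the same overall strategy as the paper: diagonalise $\Pi_1\mathcal{S}_\mu^\lambda(r)\mathcal{I}_\mu$ in the eigenbasis, split into over-damped ($k\leq N_\mu^\lambda$) and under-damped ($k>N_\mu^\lambda$) modes using estimates \eqref{zeta1est1} and \eqref{zeta3est3}, and integrate in time.

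The difference is in how the under-damped modes are handled. The paper discards the factor $e^{-r/(4\mu)}$ and instead bounds the prefactor $2/\sqrt{\mu(\alpha_k+\lambda)}$ by $2/\lambda$, which requires the more restrictive choice $c_1(\lambda)=1/(5\lambda^2)$ so that $\alpha_k$ is at least of order $\lambda^2$; the resulting time integral then contributes $2T/\lambda$, so the constant $C$ there implicitly depends on $T$. You instead bound the prefactor by the universal constant $4$ (using only $\mu(\alpha_k+\lambda)>1/4$) and retain the exponential decay, then use $\mu\leq 1/(4\lambda)$ to convert $e^{-r/(4\mu)}$ into $e^{-\lambda r}$. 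This yields the cleaner uniform bound $\|\Pi_1\mathcal{S}_\mu^\lambda(r)\mathcal{I}_\mu\|_{\mathcal{L}(H)}\leq 4e^{-\lambda r}$, a larger threshold $c_1(\lambda)=1/(4\lambda)$, and a genuinely $T$-independent constant $C=4$. Your route is a modest but real improvement.
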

The same estimate is true if we replace $\Pi_1\mathcal{S}_\mu^\lambda(t-s)\mathcal{I}_\mu$ by $\mathcal{S}^\lambda$. Proof in the case of $\mathcal{S}^\lambda$ is very easy: note that $\|\mathcal{S}^\lambda(t-s)\|_H\leq e^{-\lambda(t-s)}$ and integrate over $[0,T]$.

\begin{proof}

    Recall from the proof of Lemma \ref{lemma2.3} that  $$\|\Pi_1\mathcal{S}_\mu^\lambda(t)\mathcal{I}_\mu\|_{\mathcal{L}(H)}\leq \sup_{k\in\mathbb{N}}|f_\mu^\lambda(k)(t,0,1/\mu)|.$$
    For $1\leq k\leq N_\mu^\lambda$, we have from \eqref{zeta1est1} that 
    $$|f_\mu^\lambda(k)(t,0,1/\mu)|\leq 4e^{-(\alpha_k+\lambda)t}.$$
    We choose $c_1(\lambda)=\frac{1}{5\lambda^2}$, so that for any $\mu\in(0,c_1(\lambda)]$ and $k\geq N_\mu^\lambda$, one must have $\alpha_k\geq \lambda^2$.
 
    Then for such $k$, by \eqref{zeta3est3} we have 
    $$|f_\mu^\lambda(k)(t,0,1/\mu)|\leq \frac{2}{\sqrt{(\mu(\alpha_k+\lambda)}}e^{-\frac{t}{4\mu}}\leq \frac{2}{\lambda}\frac{e^{-\frac{t}{4\mu}}}{\sqrt{\mu}}.$$

Then the claim of the theorem follows from noting that
$$\int_0^T 4e^{-(\alpha_k+\lambda)t}dt\leq C\lambda^{-1},\quad \int_0^T \frac{e^{-\frac{t}{4\mu}}}{\sqrt{\mu}}dt\leq 4\sqrt{\mu},$$
the fact that $\mu\in(0,1),$ and the fact that $\Pi_1\mathcal{S}_\mu^\lambda(t)\mathcal{I}_\mu$ preserves the one dimensional subspace spanned by each $e_k$.
\end{proof}

We also need the following result on convergence of initial values as $\mu\to 0$.

\begin{proposition}\label{ODEconverge}
Fix any initial value $(u_0,v_0)\in H\times H^{-1}$, and any $T>0$. Then we have

\begin{equation}
 \lim_{\mu\to 0}   \sup_{t\in[0,T]}|\mathcal{S}(t)u_0-\Pi_1\mathcal{S}_\mu(t)(u_0,v_0)|_H^2=0.
\end{equation}

\end{proposition}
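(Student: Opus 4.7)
The plan is to decompose both sides along the $A$-eigenbasis $\{e_k\}$ and reduce the question to an $\ell^2$ sum of scalar ODE estimates. Since $\mathcal{A}_\mu$ (with $\zeta=1$, $\lambda=0$) leaves each two-dimensional subspace $\operatorname{span}(e_k)\times\operatorname{span}(e_k)$ invariant, writing $u_{0,k}=\langle u_0,e_k\rangle_H$ and letting $\tilde v_{0,k}$ denote the $k$-th Fourier coefficient of $v_0$ (so that $|u_0|_H^2=\sum_k u_{0,k}^2$ and $|v_0|_{H^{-1}}^2=\sum_k \alpha_k^{-1}\tilde v_{0,k}^2$), one has $\langle \Pi_1\mathcal{S}_\mu(t)(u_0,v_0),e_k\rangle_H=f_\mu(k)(t,u_{0,k},\tilde v_{0,k})$ and $\langle \mathcal{S}(t)u_0,e_k\rangle_H=u_{0,k}e^{-\alpha_k t}$. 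Linearity in the initial data together with the elementary bound $\sup_t\sum_k(\cdot)\leq \sum_k\sup_t(\cdot)$ reduces the claim to showing that each of
\[
A_\mu:=\sum_k \sup_{t\in[0,T]}|f_\mu(k)(t,u_{0,k},0)-u_{0,k}e^{-\alpha_k t}|^2,\quad B_\mu:=\sum_k \sup_{t\in[0,T]}|f_\mu(k)(t,0,\tilde v_{0,k})|^2
\]
tends to $0$ as $\mu\to 0$.

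For the position term $A_\mu$ I would apply the dominated convergence theorem with counting measure on $\mathbb{N}$. The first bullet of the convergence lemma preceding Proposition \ref{uniformproposition1.10} gives the pointwise-in-$k$ convergence $\sup_t|f_\mu(k)(t,u_{0,k},0)-u_{0,k}e^{-\alpha_k t}|\to 0$, while the energy estimate \eqref{zeta5est5} with $v=0$ yields the $\mu$-uniform domination $|f_\mu(k)(t,u_{0,k},0)|^2\leq u_{0,k}^2$. Hence the summand is bounded by $4u_{0,k}^2$, which is summable, and $A_\mu\to 0$.

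For the velocity term $B_\mu$ I would split at the threshold index $N_\mu=\max\{k:1-4\mu\alpha_k\geq 0\}$. On the overdamped range $k\leq N_\mu$, estimate \eqref{zeta1est1} gives $|f_\mu(k)(t,0,\tilde v_{0,k})|^2\leq 16\mu^2 \tilde v_{0,k}^2$, and using $\alpha_{N_\mu}\leq 1/(4\mu)$ one obtains
\[
\sum_{k\leq N_\mu}16\mu^2 \tilde v_{0,k}^2 \leq 16\mu^2\alpha_{N_\mu}\sum_{k}\alpha_k^{-1}\tilde v_{0,k}^2 \leq 4\mu |v_0|_{H^{-1}}^2.
\]
On the oscillatory range $k>N_\mu$, estimate \eqref{zeta3est3} (after dropping the exponential factor, which is $\leq 1$) gives $|f_\mu(k)(t,0,\tilde v_{0,k})|^2\leq 4\mu\alpha_k^{-1}\tilde v_{0,k}^2$, whose sum is also bounded by $4\mu|v_0|_{H^{-1}}^2$. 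Both bounds are uniform in $t\in[0,T]$ and vanish as $\mu\to 0$, so $B_\mu\to 0$.

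The hard part is the velocity term $B_\mu$: the crude bound $|f_\mu(k)(t,0,\tilde v_{0,k})|^2\leq 16\mu^2\tilde v_{0,k}^2$ applied uniformly would only close if $v_0\in H$, not merely $v_0\in H^{-1}$. The key is the dichotomy at $\alpha_k\sim 1/(4\mu)$: on the low modes the overdamped estimate absorbs an extra $\alpha_{N_\mu}\leq 1/(4\mu)$ when paired with the $H^{-1}$ weights, while on the high modes estimate \eqref{zeta3est3} already carries the $\alpha_k^{-1/2}$ improvement that fits the $H^{-1}$ norm exactly. I do not expect any obstacle for the position part, since there the $\mathcal{H}_0$ initial data lives in $H$ and the energy bound \eqref{zeta5est5} makes dominated convergence routine.
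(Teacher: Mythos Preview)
Your proof is correct and follows exactly the route the paper points to: the paper does not give its own argument here but defers to \cite{salins2019smoluchowski}, Section 8.1, where the proof proceeds by Fourier decomposition and the scalar ODE estimates \eqref{zeta1est1}, \eqref{zeta3est3}, \eqref{zeta5est5}. Your handling of both $A_\mu$ (dominated convergence via \eqref{zeta5est5} and the pointwise ODE limit) and $B_\mu$ (the $N_\mu$ splitting with $\alpha_{N_\mu}\leq 1/(4\mu)$ to match the $H^{-1}$ weights) is precisely the expected argument.
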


This result is standard and only involves computations of ODEs. A detailed proof of this result can be found in \cite{salins2019smoluchowski}, Section 8.1, Proof of Theorem 4.2. Our assumptions on eigenvalues of $A$ are slightly more general but the proof is the same.

\subsection{Proof of Smoluchowski-Kramers approximation}
In this section we impose the slightly stronger semigroup assumption $\mathbf{H}1'$ and the $\alpha$- Hölder regularity assumption $\mathbf{H}2'$. 

For simplicity, we first assume the following uniform approximation assumption $\mathbf{H}7'$:

$\mathbf{H}7'$. There exists a sequence of maps $B^n(t,x):[0,\infty)\times H\to H$ and a sequence of maps $G^n(t,x):[0,\infty)\times H\to \mathcal{L}(H_0,H)$ such that they satisfy assumptions $\mathbf{H}2'$, $\mathbf{H}3,$ $\mathbf{H}4$ and $\mathbf{H}5$ with uniform in $n$ constants, and each $G^n$, $B^n$ are Lipschitz in $x$ with time-independent Lipschitz constants. Moreover, in the $n\to\infty $ limit, 
\begin{equation}\label{approx}
   g_n:= \sup_{t\geq 0}\sup_{x\in H} |G^n(t,x)-G(t,x)|_{\mathcal{L}(H_0,H)}\to 0,\quad  b_n:=\sup_{t\geq 0}\sup_{x\in H} |B^n(t,x)-B(t,x)|_H\to 0.
\end{equation}

As discussed in Section \ref{alternative}, Assumption $\mathbf{H}7'$ is already sufficiently general to cover many random field stochastic wave equations with irregular coefficients. Say if the drift coefficient $B(t,u)=b(t,u(t,x))$ for any $u\in L^2([0,1];\mathbb{R})$, and we can find a sequence of Lipschitz maps $b_n(t,x)$ converging uniformly to $b(t,x)$ on $\mathbb{R}$, then we have constructed an approximating sequence $B^n$ of $B$.  

The proof in the general case, i.e. without the assumption $\mathbf{H}7'$, only needs the following slight modifications: By \eqref{salins}, we obtain $\mu$-independent $L^p$ norm estimates for the process $u_\mu(t)$ on $\mathcal{C}([0,T];H)$ for any sufficiently small $\mu$. Now we can apply Proposition \ref{proposition}, combined with the compact embedding of $H^\delta$ into $H$ and
Markov's inequality (we should formulate a version of Proposition \ref{proposition} for the Lebesgue integration of the drift $B(t,u(t))$, which is easy to work out, and also note that $(u_0,v_0)\in\mathcal{H}_1$)
, we deduce that, for any $\epsilon>0$,  we can find a compact set $K_\epsilon\subset H$ such that with probability at least $1-\epsilon$ uniform in $\mu>0$, $u_\mu(t)\in K_\epsilon$ for all $t\in[0,T]$;  and with probability at least $1-\epsilon$, $u(t)\in K_\epsilon$ for all $t\in[0,T]$ . We set out to find Lipschitz approximations $B^n,G^n$ of $B$ and $G$ that converge uniformly on $K_\epsilon$, which is already outlined in Section \ref{767767}, and we run the approximating sequence with $B^n$ and $G^n$. Then the proof proceeds the same way as in Section \ref{weakunique}.

We consider the following four SPDEs: the first is the stochastic heat equation on $H$, with initial value $u_0\in H$. The weak well-posedness of \eqref{smalleqn1} was established in \cite{han2022exponential} (with time independent coefficients, but the proof is exactly the same for time dependent coefficients.)
\begin{equation}\label{smalleqn1}
\frac{\partial u(t)}{\partial t}=Au(t)+B(t,u(t))   +G(t,u(t)) \frac{dW_t}{dt}.
\end{equation}
The second is the stochastic heat equation with Lipschitz coefficients and the same initial condition
\begin{equation}\label{nsmalleqn1}
\frac{\partial u^n(t)}{\partial t}=Au^n(t)+B^n(t,u^n(t))   +G^n(t,u^n(t)) \frac{dW_t}{dt}.
\end{equation}
The third is the damped stochastic wave equation with initial value $(u_0,v_0)\in \mathcal{H}_1$, whose first coordinate we denote by $u_\mu(t).$
\begin{equation}\label{smalleqn2}
\mu \frac{\partial^2 u_\mu(t)}{\partial t^2}=Au_\mu(t) - \frac{\partial u_\mu(t)}{\partial t}+B(t,u_\mu(t)) +G(t,u_\mu(t)) \frac{dW_t}{dt}.
\end{equation}
The fourth is the damped stochastic wave equation with Lipschitz coefficients and the same initial value  $(u_0,v_0)$, whose first coordinate we denote by $u_\mu^n(t)$.
\begin{equation}\label{msmalleqn2}
\mu \frac{\partial^2 u_\mu^n(t)}{\partial t^2}=Au_\mu^n(t) - \frac{\partial u_\mu^n(t)}{\partial t}+B^n(t,u_\mu^n(t)) +G^n(t,u_\mu^n(t)) \frac{dW_t}{dt}.
\end{equation}

Recall the Wasserstein distance introduced in Section \ref{weakexistence}. We first prove the following proposition, inspired by \cite{kulik2020well}, Proposition 5.1:

\begin{proposition}\label{4.94.9}
    Denote by $c_n=\max(g_n,b_n)$ with $g_n,b_n$ defined in \eqref{approx}. Then we can find some constant $C>0$ and $\omega>0$ independent of $\mu$ such that
    \begin{equation}
        W(u^n_\mu|_{[0,T]},u_\mu|_{[0,T]})\leq C (c_n)^\omega.
    \end{equation}
\end{proposition}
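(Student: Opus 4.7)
The plan is to mirror the generalized coupling scheme used in Section \ref{weakunique}, but now using the mass-independent stochastic estimates developed in Theorem \ref{theorem2.11ext} and the decay estimate for the Lebesgue integral in Theorem \ref{theorem4.7hhh}. I would introduce the controlled auxiliary process
\begin{equation*}
\mu \frac{\partial^2 \widetilde{u}^n_\mu(t)}{\partial t^2}=A\widetilde{u}^n_\mu(t) - \frac{\partial \widetilde{u}^n_\mu(t)}{\partial t} +B^n(t,\widetilde{u}^n_\mu(t))+\lambda(u_\mu(t)-\widetilde{u}^n_\mu(t))\mathbf{1}_{t\leq\tau}+G^n(t,\widetilde{u}^n_\mu(t))\frac{dW_t}{dt},
\end{equation*}
with the same initial datum $(u_0,v_0)$, where $\tau=T\wedge\inf\{t\geq 0:|u_\mu(t)-\widetilde{u}^n_\mu(t)|_H\geq 2c_n\}$, and $\lambda=c_n^{\gamma-1}$ for a small parameter $\gamma>0$ to be chosen. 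Throughout I keep $\mu\in(0,\mu_0(\lambda)]$ so that Theorem \ref{theorem2.11ext} yields constants independent of $\mu$.

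First I would carry out the probabilistic step. Since $(G^n)^{-1}$ is uniformly bounded by assumption $\mathbf{H}4$, Girsanov's theorem identifies \eqref{msmalleqn2} and the equation for $\widetilde{u}^n_\mu$ up to an added Brownian drift of $L^\infty$-norm at most $C\lambda c_n=Cc_n^{\gamma}$ on $[0,\tau]$. Pinsker's inequality then gives
\begin{equation*}
d_{TV}(\operatorname{Law}(u^n_\mu|_{[0,T]}),\operatorname{Law}(\widetilde{u}^n_\mu|_{[0,T]}))\leq CT^{1/2}c_n^{\gamma}.
\end{equation*}

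Next I would carry out the pathwise step. Writing $u_\mu-\widetilde{u}^n_\mu$ in mild form via the shifted semigroup $\mathcal{S}_\mu^\lambda$ (as in \eqref{generalpi1}), on $[0,\tau]$ the drift and diffusion discrepancies are controlled by
\begin{equation*}
|B(t,u_\mu)-B^n(t,\widetilde{u}^n_\mu)|_H\leq b_n+M(2c_n)^{\alpha}\leq Cc_n^{\alpha},\qquad |G(t,u_\mu)-G^n(t,\widetilde{u}^n_\mu)|_{\mathcal{L}(H_0,H)}\leq Cc_n^{\beta},
\end{equation*}
thanks to assumptions $\mathbf{H}2'$, $\mathbf{H}3$ and \eqref{approx}. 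Applying Theorem \ref{theorem2.11ext} to the stochastic convolution and Theorem \ref{theorem4.7hhh} to the Lebesgue convolution, then using Markov's inequality for any fixed large exponent $m$, produces
\begin{equation*}
\mathbb{P}\Big(\sup_{t\in[0,T\wedge\tau]}|u_\mu(t)-\widetilde{u}^n_\mu(t)|_H\geq Rc_n^{(1-\gamma)\eta/2+\beta}+Rc_n^{(1-\gamma)+\alpha}\Big)\leq CR^{-m},
\end{equation*}
with constants independent of $\mu\in(0,\mu_0(\lambda)]$. Since $\beta>1-\eta_0/2$, I can pick $\gamma>0$ small enough and $\eta<\eta_0$ close to $\eta_0$ so that both exponents exceed $1+2\chi$ for some $\chi>0$; setting $R=c_n^{-\chi}$ then gives $\sup_{[0,T\wedge\tau]}|u_\mu-\widetilde{u}^n_\mu|_H\leq c_n^{1+\chi}<2c_n$ with probability at least $1-Cc_n^{m\chi}$. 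By continuity of trajectories this forces $\tau=T$ on that event.

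Finally, combining the optimal Girsanov coupling of $u^n_\mu$ with $\widetilde{u}^n_\mu$ with the identity coupling (same Brownian motion) between $u_\mu$ and $\widetilde{u}^n_\mu$, I obtain on the intersection of the two good events the bound $\sup_{[0,T]}|u_\mu-u^n_\mu|_H\leq 2c_n$, and trivially at most $1$ otherwise. Since the distance $d$ defining $W$ is truncated at $1$, this yields
\begin{equation*}
W(u_\mu^n|_{[0,T]},u_\mu|_{[0,T]})\leq 2c_n+d_{TV}(u^n_\mu,\widetilde{u}^n_\mu)+\mathbb{P}(\tau<T)\leq Cc_n^{\omega}
\end{equation*}
for $\omega=\min(1,\gamma,\chi)>0$, and all constants are $\mu$-independent. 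The main delicate point is tracking $\mu$-independence: the estimate of Theorem \ref{theorem2.11ext} only provides uniform constants in the regime $\mu\leq\mu_0(\lambda)=c/\lambda=cc_n^{1-\gamma}$, so one must take $\mu$ sufficiently small relative to $n$ before the bound kicks in—precisely the way the small-mass and Lipschitz-approximation limits are to be coordinated in the subsequent proof of Theorem \ref{2smallnoise}.
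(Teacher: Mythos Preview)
Your proposal is correct and follows essentially the same generalized coupling argument as the paper's own proof: the same auxiliary controlled process, the same choice $\lambda=c_n^{\gamma-1}$, the Girsanov/total variation step, and the pathwise step via Theorem \ref{theorem2.11ext} and Theorem \ref{theorem4.7hhh}, combined into a Wasserstein bound. You actually spell out the drift contribution (exponent $(1-\gamma)+\alpha$) and the $\mu\leq\mu_0(\lambda)$ constraint more explicitly than the paper does, but the route is the same.
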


\begin{proof}
    Consider an auxiliary SPDE 
    \begin{equation}\label{msmalleqn3}
\mu \frac{\partial^2 \widetilde{u}^n_\mu(t)}{\partial t^2}=A\widetilde{u}^n_\mu(t) - \frac{\partial \widetilde{u}^n_\mu(t)}{\partial t}+B^n(t,\widetilde{u}^n_\mu(t)) +\lambda(u_\mu(t)-\widetilde{u}^n_\mu(t))1_{t\leq\tau}
+G^n(t,\widetilde{u}^n_\mu(t)) \frac{dW_t}{dt},
\end{equation}
with the same initial value  $(u_0,v_0)$, whose first coordinate we denote by $\widetilde{u}_\mu^n(t)$.

We choose $\lambda=(c_n)^{\gamma-1}$ (in this proof we choose $\gamma$ and the forthcoming $\eta$ the same way as in Section \ref{weakunique}) and consider the stopping time $\tau:=\inf\{t\geq 0:|u_\mu(t)-\widetilde{u}_\mu^n(t)|\geq c_n\}$. Then via the same Girsanov transform argument, we have 
$$d_{TV}(u_\mu^n|_{[0,T]},\widetilde{u}_\mu^n|_{[0,T]})\leq C(c_n)^\gamma.$$

Now we do the pathwise estimate. Applying the $\mu$-independent estimate in Theorem \ref{theorem2.11ext}, we deduce that, in exactly the same way as in \eqref{869},   using also Theorem \ref{theorem4.7hhh} for the Lebesgue integral, the following $\mu$-independent estimate for all $
\mu>0$ sufficiently small: (more precisely, for all $\mu\in(0,\mu_0(\lambda)]$ for $\mu_0(\lambda)$ decreasing in $\lambda$) 

\begin{equation}
    \mathbb{P}(\sup_{t\in[0,T\wedge \tau]}|u_\mu(t)-\widetilde{u}_\mu^n(t)|\geq C (c_n)^{1+\chi_0})\leq c_n^\chi
\end{equation}
where $\chi>0$ and $\chi_0>0$ are some fixed constants. By sample path continuity and the fact that $\chi_0>0$, on the event $|u_\mu(t)-\widetilde{u}_\mu^n(t)|\leq C(c_n)^{1+\chi_0}$ for $t\in[0,T\wedge\tau]$ we must have $\tau>T$, hence
\begin{equation}
    \mathbb{P}(\sup_{t\in[0,T]}|u_\mu(t)-\widetilde{u}_\mu^n(t)|\geq C c_n)\leq c_n^\chi.
\end{equation}

By definition of total variation distance, we can find a coupling of $\widetilde{u}_\mu^n|_{[0,T]}$ and $u_\mu^n|_{[0,T]}$ such that they are identical on an event of probability at least $1-C(c_n)^\gamma$. Since we upper bound the metric by 1 in our definition of Wasserstein distance $W$, the expectation of transport mass over the complement set is at most $C(c_n)^\gamma$. This coupling of $u_\mu^n$ and $\widetilde{u}_\mu^n$ leads to a coupling of $u_\mu$ and $u_\mu^n$, where we simply couple $u_\mu$ and $\widetilde{u}_\mu^n$ via the same driving Wiener process. Combining all these estimates, and the fact that distance is upper bounded by 1, we deduce that for some $\omega>0$,
 \begin{equation}
        W(u^n_\mu|_{[0,T]},u_\mu|_{[0,T]})\leq C (c_n)^\omega.
    \end{equation} independent of the choice of $\mu>0$ sufficiently small.
\end{proof}

Exactly the same estimate applies to the stochastic heat equation \eqref{smalleqn1}, \eqref{nsmalleqn1}, yielding
 \begin{equation}\label{finalshes}
        W(u^n|_{[0,T]},u|_{[0,T]})\leq C (c_n)^\omega.
    \end{equation} 

We now give the proof of Theorem \ref{2smallnoise}, the small mass limit.

\begin{proof}

The target of our proof is to show that 
$$\lim_{\mu\to 0}W(u|_{[0,T]},u_\mu|_{[0,T]})=0.$$

By triangle inequality applied to the Wasserstein distance, we have
$$W(u|_{[0,T]},u_\mu|_{[0,T]})\leq W(u_\mu|_{[0,T]},u^n_\mu|_{[0,T]})+ W(u^n|_{[0,T]},u^n_\mu|_{[0,T]})+W(u^n|_{[0,T]},u|_{[0,T]}).$$

Then for each $n$ we have, applying Proposition \ref{4.94.9} and \eqref{finalshes}, the following estimate that does not depend on the value $\mu\in(0,\mu_0({c_n}^{\gamma-1}))$:
$$W(u|_{[0,T]},u_\mu|_{[0,T]})\leq W(u^n|_{[0,T]},u^n_\mu|_{[0,T]})+C(c_n)^\omega.$$

Thus we only have to show that, for each $n>0$, we have 
\begin{equation}
    \label{kuaqudaxue}
\lim_{\mu\to 0} W(u^n|_{[0,T]},u^n_\mu|_{[0,T]})=0.\end{equation}

Once this is done, the proof of the whole theorem is complete since $\lim_{n\to\infty} c_n=0$.

The claim \eqref{kuaqudaxue} is nothing but the Smoluchowski-Kramers approximation approximation of $u_\mu^n$ to $u^n$. Since the coefficients $G^n$ and $B^n$ are Lipschitz continuous in $x$, the said convergence follows from the existing literature \cite{cerrai2006smoluchowski}, \cite{cerrai2006smoluchowski2}, \cite{salins2019smoluchowski}. We give a sketch of proof for the claim \eqref{kuaqudaxue}, following the steps of \cite{salins2019smoluchowski}, for sake of completeness:

From the mild formulation \eqref{eq2.4geg} and \eqref{stochheat}, we have the following decomposition: 
\begin{equation}\label{monster}\begin{aligned}
     &u^n(t)-u_\mu^n(t)=(\mathcal{S}(t)u_0-\Pi_1 \mathcal{S}_\mu(t)(u_0,v_0))\\
  &+  \int_0^t (\mathcal{S}(t-s)-\Pi_1\mathcal{S}_\mu(t-s)\mathcal{I}_\mu) B^n(s,u^n(s))ds\\
&+\int_0^t \Pi_1 \mathcal{S}_\mu(t-s)\mathcal{I}_\mu(B^n(s,u^n(s))-B^n(s,u_\mu^n(s)))ds\\
&+[\int_0^t \mathcal{S}(t-s)G^n(s,u^n(s))dW(s)-\int_0^t \Pi_1 \mathcal{S}_\mu(t-s)\mathcal{I}_\mu G^n(s,u^n(s))dW(s)]\\
&+\int_0^t \Pi_1 \mathcal{S}_\mu(t-s)\mathcal{I}_\mu (G^n(s,u^n(s))-G^n(s,u_\mu^n(s))dW(s)\\
&=:\sum_{k=1}^5 J_k^\mu(t).
  \end{aligned}
\end{equation}

Applying Proposition \ref{ODEconverge}, Theorem \ref{theorem4.5} and \ref{theorem4.6}, we deduce that 
\begin{equation}
\lim_{\mu\to 0}\mathbb{E}\sup_{t\in[0,T]}|J_i^\mu(t)|_H^p=0,\quad i=1,2,4.
\end{equation}

Denote by $C_n$ the larger one of the Lipschitz constants of $G^n$ and $B^n$. Then, 
\begin{equation}
    \mathbb{E}\sup_{t\in[0,T]}|J_3(t)|_H^p\leq (C_n)^pT^{p-1}\mathbb{E}\int_0^T \sup_{s\in[0,t]}|u^n(s)-u_\mu^n(s)|_H^p dt.
\end{equation}
By estimate \eqref{salins} and the Lipschitz continuity of $G$,
\begin{equation}
    \mathbb{E}\sup_{t\in[0,T]}|J_5(t)|_H^p\leq (C_n)^p C(T)\mathbb{E}\int_0^T \sup_{s\in[0,t]}|u^n(s)-u_\mu^n(s)|_H^p dt.
\end{equation}

Taking the norm on both sides of \eqref{monster} and apply Gronwall's lemma, we deduce that 
$$\begin{aligned}\mathbb{E}&\sup_{t\in[0,T]}|u^n(t)-u_\mu^n(t)|_H^p\\&\leq C(n,T)e^{ TC(n,T)}(\sup_{t\in[0,T]}|J_1(t)|_H^p+\mathbb{E}\sup_{t\in[0,T]}|J_2(t)|_H^p+\mathbb{E}\sup_{t\in[0,T]}|J_4(t)|_H^p).
\end{aligned}$$
Taking $\mu\to 0$, we deduce that $\lim_{\mu\to 0}\mathbb{E}\sup_{t\in[0,T]}|u^n(t)-u_\mu^n(t)|_H=0$
and hence completes the proof of \eqref{kuaqudaxue}.
\end{proof}

\printbibliography

\end{document}